\newcommand{\captionfonts}{\footnotesize}
\long\def\@makecaption#1#2{%
  \vskip\abovecaptionskip
  \sbox\@tempboxa{{\captionfonts #1: #2}}%
  \ifdim \wd\@tempboxa >\hsize
    {\captionfonts #1: #2\par}
  \else
    \hbox to\hsize{\hfil\box\@tempboxa\hfil}%
  \fi
  \vskip\belowcaptionskip}
\newcommand{\nwc}{\newcommand}
\newtheorem{prop}{Proposition}[section]
\newtheorem{lemma}[prop]{Lemma}
\newtheorem{rem}[prop]{Remark}
\nwc{\R}{\mathbb R}
\nwc{\Z}{\mathbb Z}
\nwc{\N}{\mathbb N}
\newcommand{\ignore}[1]{}
\nwc{\eps}{\varepsilon}
\nwc{\re}{Re\,}
\nwc{\wto}{\rightharpoonup}
\nwc{\ds}{\displaystyle}
\newcommand {\bedis} {\begin{displaymath}}
\newcommand {\edis} {\end{displaymath}}
\newcommand{\newbeqna} {\renewcommand {\arraystretch} {2}
                        \begin {displaymath} \begin {array}{crcl}}
\newcommand{\neweqna}{\end{array} \end {displaymath}}
\newcommand{\fbeqna}{\renewcommand {\arraystretch} {1.3}
\begin {displaymath}\begin{array}{rcll}}
\newcommand{\feqna}{\end{array}\end{displaymath}}
\newcommand {\beqna} {\begin{eqnarray*}}
\newcommand {\eqna} {\end{eqnarray*}}
\newcommand {\beqn} {\begin{eqnarray}}
\newcommand {\eqn} {\end{eqnarray}}
\begin{document}

\title{Oscillatory travelling wave solutions for coagulation
equations}

\author{
B. Niethammer
\thanks{Institute of Applied Mathematics,
University of Bonn, Endenicher Allee 60, 53115 Bonn, Germany}
\and
J. J. L. Vel\'{a}zquez\thanks{Institute of Applied Mathematics,
University of Bonn, Endenicher Allee 60, 53115 Bonn, Germany}
}

\maketitle
\begin{abstract}
We consider Smoluchowski's coagulation equation with kernels of homogeneity one of the form 
$K_{\varepsilon }(\xi,\eta) =\big( \xi^{1-\varepsilon
}+\eta^{1-\varepsilon }\big)\big ( \xi\eta\big) ^{\frac{\varepsilon }{2}}$.  Heuristically, in suitable exponential variables, one can argue that in this case the
long-time behaviour of solutions is similar to the inviscid Burgers equation and that for Riemann data solutions converge to a traveling wave for large times.
Numerical simulations in \cite{HNV16} indeed support this conjecture, but also reveal that the traveling waves are oscillatory and the oscillations become stronger
with smaller $\eps$. The goal of this paper is to construct such oscillatory traveling wave solutions and provide details of their shape via formal matched asymptotic expansions.
\end{abstract}

{\bf Keywords:} Smoluchowski's coagulation equation,  kernels with homogeneity one, traveling waves

{\bf AMS Subject classification:} 70F99, 82C22, 45M10

\section{Introduction}
\subsection{The coagulation equation}
The classical Smoluchowski coagulation equation is given by
\begin{equation}\label{smolu}
\partial_t f(t,\xi) = \tfrac 1 2 \int_0^{\xi} K(\xi{-}\eta,\eta) f(t,\xi{-}\eta) f(t,\eta)\,d\eta - f(t,\xi) \int_0^{\infty} K(\xi,\eta) f(t,\eta)\,d\eta\,,
\end{equation}
where $f(t,\xi)$ denotes the number density of clusters of size $\xi \in (0,\infty)$ at time $t>0$ and it describes the formation of larger clusters by binary coagulation of smaller
ones. The rate kernel $K$ is a nonnegative, symmetric function that contains all the information about the microscopic details of the coagulation process. 
For example, in \cite{Smolu16}
 Smoluchowski derived the kernel $K(\xi,\eta)=\big(\xi^{1/3}+\eta^{1/3}\big) \big( \xi^{-1/3} + \eta^{-1/3}\big)$ for clusters that move according to Brownian motion and merge if they 
 come close to each other. Various other kernels from different application areas can be found in the survey articles \cite{Friedlander00}, \cite{Aldous99} and \cite{Drake72}.

 Most kernels that one encounters in applications are homogeneous and we will assume this from now on. It is well-known that for kernels of homogeneity larger than one
 gelation occurs, that is the loss of mass at finite time, while for kernels of 
 homogeneity smaller than or equal to one, solutions conserve the mass if it is initially finite. 
A topic of particular interest in this latter case is whether the large-time behaviour is described by self-similar solutions. While this issue is well-understood for the constant
and the additive kernel \cite{MePe04}, for other kernels only few results are available. In the case of kernels with homogeneity strictly smaller than one,
existence results for self-similar solutions  have been established for a large class of kernels \cite{FouLau05}, \cite{EMR05}, while  uniqueness and convergence to self-similar
form has only recently been proved for some special cases \cite{NTV16b}, \cite{LauNiVel16}.

\subsection{Kernels with homogeneity one }

In this article we are going to consider kernels with homogeneity one. 
Van Dongen and Ernst \cite{vanDoErnst88} already noticed that one needs to distinguish two cases. 
In the first case, called class II kernels in \cite{vanDoErnst88},
one has $K(\xi,1) \to c_0>0$ as $\xi \to 0$. The most prominent example is the additive kernel $K(\xi,\eta)=\xi+\eta$, 
where it  is possible to solve the equation with the Laplace transform. In fact, for the additive kernel 
 there exists a whole family of self-similar solutions with
finite mass \cite{MePe04}. One of them has exponential decay, the others decay like a power law in a way such that the second moment is infinite. 
Recently, as the first mathematical result for non-solvable kernels with homogeneity one, existence of self-similar solutions for a range of class II kernels has been obtained in 
\cite{BNV16}.

On the other hand, kernels that  satisfy $\lim_{\xi \to 0} K(\xi,1)=0$,  are  called class-I kernels  in \cite{vanDoErnst88} and we also sometimes call them {\it diagonally dominant}.
In this case it is known that self-similar solutions with finite mass cannot exist, but  a suitable change of variables reveals that one can expect that the long-time behaviour
of solutions is similar as in the case of the inviscid Burgers equation. To explain the idea it is useful to rewrite \eqref{smolu} in conservative form, that is as
\begin{equation}\label{eq1}
 \partial_t \big( \xi f\big) + \partial_{\xi} \Big( \int_0^{\xi}  \int_{\xi-\eta}^{\infty} K(\eta,\rho) \eta f(\eta)f(\rho)\,d\rho\,d\eta\Big)=0\,.
 \end{equation}
Then we make the change of variables
 \begin{equation}\label{newvariables}
  \xi=e^{x} \qquad \mbox{ and } \qquad u(t,x)= \xi^2 f(t,\xi)
 \end{equation}
 such that  \eqref{eq1} becomes
 \begin{equation}\label{uequation}
  \partial_t u(t,x) = - \partial_x \Big( \int_{-\infty}^x\,dy\int_{x+ \ln (1-e^{y-x})}^{\infty} \,dz \,K(e^{y-z},1) u(t,y) u(t,z)\Big)\,.
  \end{equation}
  Notice also that $\int_{\infty} \xi f(t,\xi)\,d\xi = \int_{\R}u(t,x)\,dx$. Hence, mass conserving solutions to \eqref{smolu} correspond to  nonnegative integrable solutions
  of \eqref{uequation} with conserved $L^1$-norm.
  
As in \cite{HNV16}, we consider the rescaled function 
 $u_{\eps}(\tau,\tilde x) = 
 \frac{1}{\eps} u(\frac{\tau}{\eps^2},\frac{\tilde x}{\eps})$,  where $0<\eps\ll1$, to understand the large-time behaviour 
 of solutions to \eqref{uequation}. We find
 \begin{equation}\label{uepsequation}
  \begin{split}
 \partial_{\tau} u_{\eps}(\tau,\tilde x)& = - \partial_{\tilde x}\Big( \frac{1}{\eps^2} \int_{-\infty}^{\tilde x} \int_{\tilde x + \eps \ln \big( 1-e^{\frac{y-\tilde x}{\eps}}\big)}^{\infty}
  K\big(e^{\frac{y-z}{\eps}},1\big) u_{\eps}(\tau,y) u_{\eps}(\tau,z)\,dz\,dy\Big)\\
 &= - \partial_{\tilde x}
\Big( \frac{1}{\eps^2} \int_{-\infty}^{0} \int_{\eps \ln \big( 1-e^{\frac{y}{\eps}}\big)}^{\infty}
 K\big(e^{\frac{y-z}{\eps}},1\big) u_{\eps}(\tau,\tilde x + y) u_{\eps}(\tau,\tilde x+z)\,dz\,dy\Big)\\
  &\approx - c_0\partial_{\tilde x} \big(u_{\eps}(\tau,\tilde x)^2\big)\,
    \end{split}
 \end{equation}
with $c_0=\int_{-\infty}^0 \int_{\ln(1-e^y)}^{\infty} K\big(e^{y-z},1\big)\,dy\,dz<\infty $. Hence, we  conclude that $u_{\eps}$ approximately solves the Burgers equation.
Notice that in the second step in \eqref{uepsequation} we have used the translation invariance of the integral which is specific to the property that the kernel has homogeneity one.
Recall that for integrable nonnegative data with mass $M$ the solutions to the Burgers equation $\partial_tu + \partial_x \big(u^2\big)=0$ converge in the long-time limit to 
an $N$-wave with the same mass, i.e. $u(t,x) \sim \frac{1}{\sqrt{t}} N(x/\sqrt{t})$ with
$N(x)=\frac{x}{2}\chi_{[0,2 \sqrt{M}]}$. On the other hand, if one starts with data such that $u_0(x)\to b>0$ as $x \to -\infty$ the solution converges to a traveling wave with height $b$ and
speed related to $b$.

In order to investigate, whether the long-time behaviour of solutions to \eqref{uequation} is indeed the same as for the Burgers equation we performed in \cite{HNV16} numerical
simulations of \eqref{uequation}  for a family of kernels
\begin{equation}\label{kernelfamily}
  K_{\alpha}(\xi,\eta) = c_{\alpha} \big(\xi+\eta)^{1-2\alpha}\xi^{\alpha}\eta^{\alpha}\,, \qquad \alpha >0\,
   \end{equation}
   with a suitable normalization constant $c_{\alpha}>0$. 
Figure \ref{fig:Nwaves} shows the simulations for  several values of $\alpha$ for smooth initial data with compact support. 
  \begin{figure}[h!]
  \centering
  \includegraphics[width=.95\textwidth]{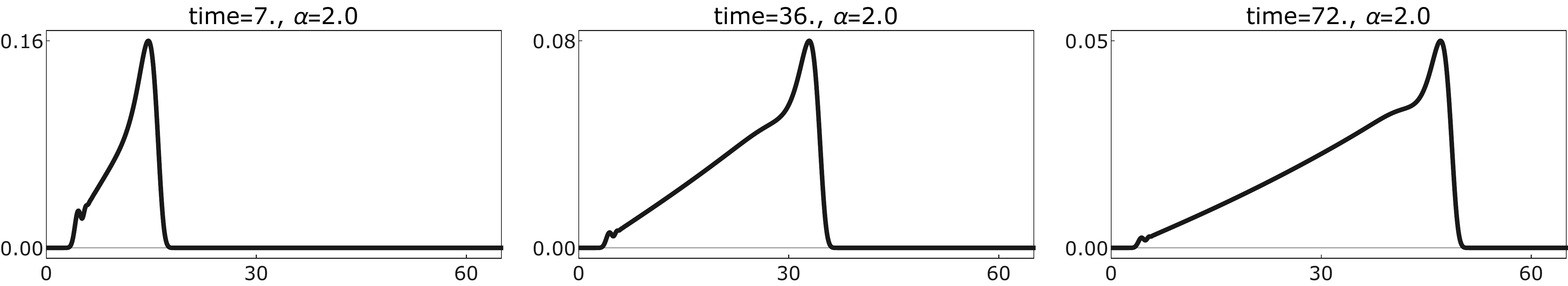}\\
  \includegraphics[width=.95\textwidth]{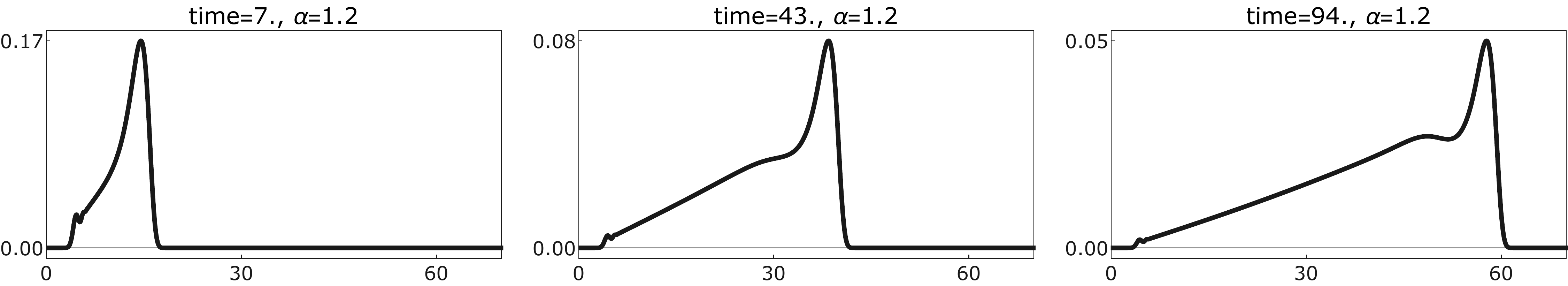}\\
  \includegraphics[width=.95\textwidth]{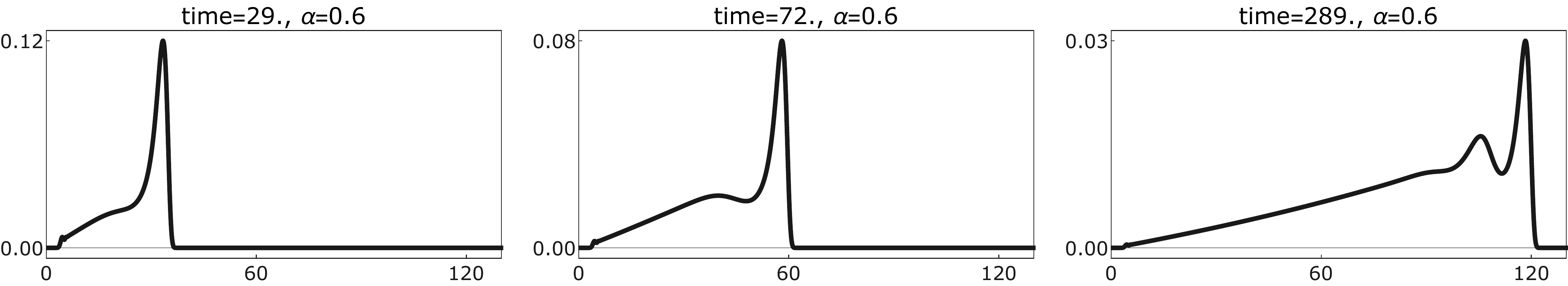}
  \caption{Convergence to the N-wave for $\alpha=2$ (top),
  $\alpha=1.2$ (middle) and $\alpha=0.6$ (bottom).}
  \label{fig:Nwaves}
  \label{fig:Nwaveneardiagonal}
\end{figure}
We see that solutions indeed converge to an $N$-wave in the long-time limit, but that the transition at the shock front is oscillatory, with oscillations becoming stronger when
$\alpha$ becomes smaller. One expects that the transition at the shock front is given by a rescaled traveling wave profile and simulations of \eqref{uequation} for Riemann data
(see Figure \ref{fig:twaves})
indeed  confirm that the traveling wave profiles are oscillatory for small $\alpha$. 

It is the purpose of this paper to establish the existence of such a traveling wave and compute details of its shape via formal matched asymptotic expansions. We will do this for kernels of the form
\begin{equation}\label{kernel}
K_{\varepsilon }(\xi,\eta) =\big( \xi^{1-\varepsilon
}+\eta^{1-\varepsilon }\big)\big ( \xi\eta\big) ^{\frac{\varepsilon }{2}}\,,
\end{equation}
where $\eps>0$ is a small parameter. We choose to consider kernels of the form \eqref{kernel} since the computations become slightly simpler. For small $\eps$ and $\alpha$ we do not
expect that the solutions for kernels as in \eqref{kernelfamily} and \eqref{kernel} respectively behave very differently.

 \begin{figure}[h!]
  \centering
  \includegraphics[width=.94\textwidth]{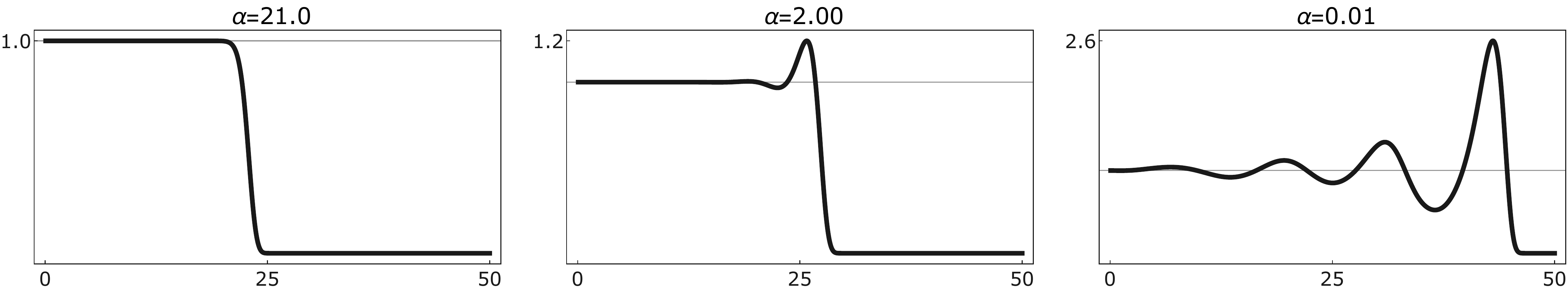}
  \caption{The shape of the traveling wave for different values of $\alpha$ ($u$ against $x$).}
  \label{fig:twaves}
\end{figure}
Before we proceed to an informal description of  construction of oscillatory traveling waves, we also mention  that we discuss in \cite{HNV16} in detail also  the situation for large $\alpha$. While for sufficiently large $\alpha$, the traveling wave appears to be monotone,
in this case  the constant solution is unstable which leads to additional phenomena; we refer to \cite{HNV16} for details.

\subsection{Overview of main result}\label{Ss.overviewmain}

The goal of this paper is to construct
 traveling wave solutions of \eqref{uequation}. The ansatz $u(t,x)=b G(x-bt)$ indeed leads to the equation
\begin{equation}
G(x) =\int_{-\infty }^{x}dy\int_{x+\ln ( 1-e^{y-x}) }^{\infty }dz K\big( e^{y-z},1\big) G(y)G(z)    \label{A1equation}
\end{equation}%
for the profile $G$ with the boundary condition $G(x) \to 0$ as $x \to \infty$. Such solutions $G(x)$ of
\eqref{A1equation} correspond to special solutions $f$ of \eqref{eq1} of the form $\xi^2 f(t,\xi)=b G(\ln \xi -b t)$, hence to traveling waves in the variable $\ln \xi$.

We recall that for $\eps=0$ in \eqref{kernel} we obtain the additive kernel, for which a whole family of mass conserving self-similar solutions  
 exist. One of them with exponential decay in the self-similar variable $\zeta=e^{-bt}\xi$, the other
ones with power law decay $\zeta^{-(2+\rho)}$ as $x \to \infty$ for $\rho \in (0,1)$ \cite{MePe04}.
In the variables \eqref{newvariables} this means that $G(x) \sim e^{-\rho x}$ as $x \to \infty$.
We also have that $G(x) \to 0$ as $x \to -\infty$ with the asymptotics $e^{-\frac{\rho}{1+\rho}x}$. 

In contrast, solutions to \eqref{A1equation} with diagonally dominant
kernel must satisfy $\lim_{x \to -\infty} G(x)=G_{\infty}>0$, hence, the integral of $G$ cannot be finite. This is the reason that self-similar solutions with finite mass cannot exist in
this case. Nevertheless,  nonnegative solutions to \eqref{A1equation} in the case of diagonally dominant kernels are relevant as well, since we expect that
they describe the long-time behaviour of solutions if one starts with Riemann data in the variable $x$, and they describe the transition at the shock front of the $N$-wave that appears
if the initial data are integrable. Notice also, that by varying the parameter $b$, we can change the step height of the traveling wave. For the following analysis it is convenient to
normalize $b=1$. The numerical simulations on the other hand were carried out by normalizing $G$ such that $G(x) \to 1$ as $x \to -\infty$.

Our goal in this paper is to study how  solutions $G$ to \eqref{A1equation} behave for kernels as in \eqref{kernel} in the regime $\eps \ll 1$.

We now describe in an informal way the main properties of the solution that we are going to construct. We try to obtain the solution essentially by a  shooting argument starting
at $x\to -\infty$. In Sections \ref{Ss.minusinfty}-\ref{Ss.linearization} we obtain that as $x \to -\infty$ a solution  $G$ of \eqref{A1equation} is oscillatory with the asymptotics
\begin{equation}
G(x)  \sim G_{\infty}+\mbox{Re}\Big(  Le^{i\varphi
}e^{\mu^{+}x}\Big)  +\Psi\big(x;L,\varphi\big)  +C_{\ast}e^{\mu_{\ast
}x} \qquad \mbox{ as } x \to -\infty.\ \label{GasymptModified}%
\end{equation}
with
\begin{equation*}
 G_{\infty} \approx \frac{\eps^2}{4}\,, \qquad \mu^+ \approx \frac{\eps^2}{8}+i \frac{\eps}{2}\, \qquad \mbox{ and }\qquad  \mu_{\ast} \approx 1-\frac{\eps}{2} \qquad \mbox{ as } \eps \to 0\,.
\end{equation*}

The constants $L>0, \varphi \in [0,2\pi)$ and $C_{\ast}\in \R$ can be
chosen freely, while the function $\Psi(X;L,\varphi)  $ yields
the contribution of the nonlinear terms induced by the term $\mbox{Re}%
( Le^{i\varphi}e^{\mu^{+}x})  $ which are larger than the
contribution due to the term $C_{\ast}e^{\mu_{\ast}x}.$
As we will explain in detail in the end of Section \ref{Ss.linearization} we can restrict ourselves, due to the translation invariance of \eqref{A1equation}, to $\varphi=0$
and $L \in [1,e^{\frac{\pi \eps}{2}})$. Then \eqref{GasymptModified} still describes a two-dimensional manifold. It turns out that the solution that we are seeking lies
close to the submanifold given by $C_*=0$. The reason is that  otherwise instabilities kick in, the solution becomes negative and is not a nonnegative solution of \eqref{A1equation} with the correct
behaviour as $x \to \infty$. 

 We remark here that our strategy is similar to our construction of self-similar solutions
to the coagulation equation with kernel $K(\xi,\eta)=(\xi,\eta)^{\lambda}$ with $\lambda \in (0,1/2)$ in \cite{MNV11}. 
Also in this case the solution develops oscillations that become more extreme with smaller $\lambda$. However, the details are somewhat different and in particular, there we do not have
the additional term  $C_{\ast}e^{\mu_{\ast}x}$ in \eqref{GasymptModified}, but instead can do the shooting in a one-dimensional manifold.

For simplicity of the argument assume  for the moment that the terms with $C_{\ast}$ can be neglected and that we can use $L$ as a single shooting parameter.
The main technical difficulty in the analysis of equation \eqref{A1equation} is to control the nonlocal integral terms.
A key idea in our approach is that as long as $G \ll 1$  we can approximate equation \eqref{A1equation} by a nonlinear ODE system that is a perturbation of a Lotka-Volterra system,
see Section \ref{Ss.lotkavolterra}.
We use an adiabatic approximation to compute the increase of an associated energy $E$ along the trajectories. This approximation is valid as along as $E \ll \frac{1}{\eps}$.
 When $E \sim \frac{1}{\eps}$ we enter what we call the intermediate regime, see Section \ref{Ss.intermediate}. In this regime $G$ develops on intervals of length of order
 $O(\ln \frac{1}{\eps})$ peaks of height one that can be approximately described
by self-similar solutions to the coagulation equation with additive kernel, see Section \ref{Ss.kinetic}. 
 These peaks are connected by  regions of length $O(\eps^{-2})$ 
in which $G$ is of order $O(e^{-\frac{1}{\eps}})$. 
In the regime where $G$ is small we can again approximate by simple ODE systems. Still, we have to distinguish three different regimes that
have to be coupled appropriately as described in Sections \ref{Ss.ode1}-\ref{Ss.ode2}. A key result in order to justify the approximation by ODEs is Lemma \ref{L.Gapprox} in 
Section \ref{Ss.Gapprox} that allows to control, if $G$ is small and in a certain sense regular,  the nonlocal terms by pointwise estimates.

For each $L \in [1,e^{\frac{\pi \eps}{2}})$ we can construct in this way a solution to \eqref{A1equation}. However, in general they become negative.      By a continuity argument,
explained in Section \ref{Ss.shooting},
we  show that there exists a parameter $L$ such that the corresponding solution satisfies $G(x)\to 0$ as $x \to  \infty$ with the asymptotics as given in \eqref{AppQFinal} of Lemma \ref{L.Glargex}.

In Section \ref{S.fast} we then address the additional parameter $C_{\ast}$. We are going to show that $C_{\ast}$ must be very small and carefully chosen in order to obtain a nonnegative
solution to \eqref{A1equation} that tends to zero as $x \to \infty$. More precisely, we consider the different regions explained above separately and show that if the term associated
with $C_{\ast}$ becomes of a size comparable to the solution of the Volterra-like problem, the corresponding solution   will not be admissible as a solution to the coagulation equation. 

Finally we emphasize that the construction of our solution is based on a matched asymptotic expansion. However, since we deal with an integral equation we can in general not
use standard
tools for asymptotic problems in ODEs, but have to estimate carefully the nonlocal terms corresponding to the integrals.


 \section{Construction of an oscillatory traveling wave.}\label{S.wave}

 \subsection{The behaviour for $x \to -\infty$}\label{Ss.minusinfty}

As a first step we derive the asymptotics of  a solution as $x \to -\infty$. For this we note 
\begin{equation*}
K_{\varepsilon }\big( e^{y},1\big) = 
\big( e^{(1-\varepsilon ) y}+1\big) e^{\frac{\varepsilon y}{2}%
}=e^{(1 -\frac{\varepsilon }{2}) y}+e^{\frac{\varepsilon y}{2}}
\end{equation*}
and it follows that
$\int_{-\infty }^{0}dy\int_{\ln ( 1-e^{y}) }^{\infty}K_{\varepsilon }\big(e^{ y-z},1\big) dz<\infty$.
We are interested in the asymptotics of the solution of (\ref{A1equation})
satisfying $\lim_{x \to -\infty}G(x)=G_{\infty}$ where
$G_{\infty }$ is given by
\begin{equation}\label{Ginftydef}
G_{\infty }\int_{-\infty }^{0}dy\int_{\ln ( 1-e^{y}) }^{\infty}dzK_{\varepsilon }\big( e^{y-z},1\big) =1\,.
\end{equation}
A simple explicit computation gives  $G_{\infty}=\frac{( 1-\frac{\varepsilon }{2}) \frac{\varepsilon }{2}}{\Gamma ( 1-\frac{\varepsilon }{2}) \Gamma ( \frac{\varepsilon }{2}) }$
and using the asymptotics $\Gamma(z)z \to 1$ as $z \to 0$ we also find
\begin{equation}\label{Ginftyasymptotics}
 G_{\infty} =\frac{\eps^2}{4} \big( 1 - \frac{\eps}{2} + O(\eps^2)\big) \qquad \mbox{ as } \eps \to 0\,.
\end{equation}

 \subsection{Reformulation as a Volterra-like problem}\label{Ss.reformulation}
 
 Suppose that $G(\cdot)  $ is a solution of \eqref{A1equation}. We 
rewrite this equation as
\begin{align}
G( x)   &  =\int_{-\infty}^{0}dy\int_{0}^{\infty}dz
\Big(e^{(1-\frac{\varepsilon}{2}) ( y-z)  }+e^{\frac{\varepsilon}{2}(y-z)  }\Big)  G(y+x)G(z+x)  \nonumber\\
& \qquad +\int_{-\infty}^{0}dy\int_{\ln(  1-e^{y})  }^{0}dz
\Big(e^{(  1-\frac{\varepsilon}{2}) (y-z)  }+e^{\frac{\varepsilon}{2}(y-z)  }\Big)  G(y+x)G(z+x) \label{GProd}%
\end{align}
and
 define the following functions
\begin{equation}
A(x)  =\frac{1}{2}\int_{-\infty}^{0}G(y+x)
e^{\frac{\varepsilon}{2}y}dy\,,\qquad \quad  B(x)  =\frac{1}{2}\int
_{0}^{\infty}e^{-\frac{\varepsilon}{2}z}G(z+x)  dz\ \label{Int1}%
\end{equation}%
\begin{equation}
P(x)  =\int_{-\infty}^{0}G(y+x)  e^{(1-\frac{\varepsilon}{2})  y}dy\,, \qquad \quad  Q(x)  =\int
_{0}^{\infty}e^{-(  1-\frac{\varepsilon}{2})  z}G(z+x)  dz\ \label{Int2}%
\end{equation}%
\[
J[ G] (x)  =\int_{-\infty}^{0}dY\int_{\ln(1-e^{y})  }^{0}dz\Big(  e^{(1-\frac{\varepsilon}{2})(y-z)  }+e^{\frac{\varepsilon}{2}(y-z)  }\Big)G(y+x)G(z+x)\,.
\]
Then
\begin{align}
\frac{dA(x)  }{dx}   =-\frac{\varepsilon}{2}A(x)
+\frac{1}{2}G(x) \,,& \qquad \quad  \frac{dB(x)  }{dx}%
=\frac{\varepsilon}{2}B(x)  -\frac{1}{2}G(x)
\label{ODE1}\\
\frac{dP(x)  }{dx}  =-\Big(  1-\frac{\varepsilon}{2}\Big)
P(x)  +G(x) \,,& \qquad \quad  \frac{dQ(x)}{dx}=\Big(  1-\frac{\varepsilon}{2}\Big)  Q(x)  -G(x) \label{ODE2}%
\end{align}
and rewrite (\ref{GProd}) as
\begin{equation}
G(x)  =4A(x)  B(x)  +P(x)
Q(x)  +J[G]  (x) \,.\label{Ident}%
\end{equation}

Notice that $J[G](x)  $ is determined by the
values of $G(y)  $ with $y\leq x$ and the
functions $A,B,P$ and $Q$ solve the ODEs in (\ref{ODE1}) and (\ref{ODE2}).
Hence, we will call  (\ref{ODE1})-(\ref{Ident})
a Volterra-like problem since the values of the functions $A,B,P,Q$ and $G$ at
$x$ depend only on the values of these functions for $y\leq x.$

A solution of (\ref{ODE1})-(\ref{Ident})
is however not necessarily a solution of \eqref{A1equation}.
More precisely, suppose that $G$ is a global solution of
(\ref{ODE1})-(\ref{Ident}). Then, the differential equations
for $B$ and $Q$ in (\ref{ODE1}) and (\ref{ODE2}) suggest that the functions
$B $ and $Q $  behave as
 $C_{B}e^{\frac{\varepsilon x}{2}}$ and $C_{Q}e^{(  1-\frac{\varepsilon}{2})  x}$ respectively as $x \to \infty$. However, the
formulas for $B(x)  $ and $Q(x)  $ in (\ref{Int1}) and
(\ref{Int2}) imply that these two functions are bounded for large $x$ if $G$
is bounded. This suggests that some suitable shooting parameters need to be
adjusted in order to obtain bounded solutions of (\ref{ODE1})-(\ref{Ident}).

\subsection{Linearization of the Volterra-like problem as $x\rightarrow
-\infty.$}\label{Ss.linearization}

We begin by constructing solutions of (\ref{ODE1})-(\ref{Ident})
 for $x\leq-x_0$ with  sufficiently large $x_0>0$. These solutions  depend on two parameters and satisfy
\begin{equation}\label{LimitG}
G(x)  \rightarrow G_{\infty} \sim \frac{\eps^2}{4}\text{ as }x\rightarrow-\infty
\end{equation}
where $G_{\infty}>0$ is as in \eqref{Ginftydef}. Moreover, we also assume 
that
\begin{equation}\label{Limits}
 \begin{split}
A(x)   & \rightarrow A_{\infty}=\frac{G_{\infty}}{\varepsilon
}\sim \frac{\eps}{4}\,, \qquad \quad
B(x)    \rightarrow B_{\infty}=\frac{G_{\infty}}{\varepsilon
}\sim \frac{\eps}{4}\\
P(x)   & \rightarrow P_{\infty}=\frac{2G_{\infty}}{2-\varepsilon  }\sim \frac{\eps^2}{4}\,,\qquad 
Q(x)   \rightarrow Q_{\infty}=\frac{2G_{\infty}}{2-\varepsilon  }\sim \frac{\eps^2}{4}
\end{split}
\end{equation}
as $x \to -\infty$. We now look for solutions that behave
asymptotically as
\[
(  G,A,B,P,Q)  =\big(  G_{\infty},A_{\infty},B_{\infty},P_{\infty
},Q_{\infty}\big)  +\Big(  G^1_{\infty},A^1_{\infty},B^1_{\infty},P^1_{\infty},Q^1_{\infty}\big)  e^{\mu x}\qquad \text{ as }x\rightarrow-\infty\,.
\]
We linearize in \eqref{ODE1}-\eqref{Ident}, to obtain after some rearrangements that
\begin{align*}
A^1_{\infty}  & =\frac{1}{2}\frac{G^1_{\infty}}{(\mu+\frac{\varepsilon}{2})  }\,,
\qquad B^1_{\infty}=\frac{1}{2}\frac{G^1_{\infty}}{\big(\frac{\varepsilon}{2}-\mu\big)  }\,, \qquad 
P^1_{\infty}  & =\frac{G^1_{\infty}}{\left(  \mu+1-\frac{\varepsilon}%
{2}\right)  }\,, \qquad  Q^1_{\infty}
=\frac{G^1_{\infty}}{\left(  1-\frac{\varepsilon}%
{2}-\mu\right)  }%
\end{align*}
and
\[
G^1_{\infty}   =\frac{2G_{\infty}}{\varepsilon}\frac{G^1_{\infty}}{\left(
\frac{\varepsilon}{2}-\mu\right)  }+\frac{2G_{\infty}}{\varepsilon}%
\frac{G^1_{\infty}}{\left(  \mu+\frac{\varepsilon}{2}\right)  }+\frac
{2G_{\infty}}{\left(  2-\varepsilon\right)  }\frac{G^1_{\infty}}{\left(
1-\frac{\varepsilon}{2}-\mu\right)  }+\frac{2G_{\infty}}{\left(
2-\varepsilon\right)  }\frac{G^1_{\infty}}{\left(  \mu+1-\frac{\varepsilon}%
{2}\right)  }
  +G_{\infty}G^1_{\infty} J(\mu,\eps)
\]
with 
\begin{equation}\label{Jdef}
J(\mu,\eps):=\int_{-\infty}^{0}dy\int_{\ln (1-e^{y})  }^{0}dz \Big(e^{(  1-\frac{\varepsilon}{2}) (y-z)  }%
+e^{\frac{\varepsilon}{2} (y-z)  }\Big) \Big(  e^{\mu y}+e^{\mu z}\Big)\,.
\end{equation}
We are looking for solutions with $\mbox{Re}(\mu) \geq 0$. Then $G_{\infty}^1\not=0$ since otherwise 
$\mbox{Re}(\mu)<0$ or 
$A^1_{\infty}=B^1_{\infty}=P^1_{\infty}=Q^1_{\infty}=0$.
Then, if $G^1_{\infty}\neq0$ we obtain after some lengthy but elementary rearrangements and integrations by parts, that 
\ignore{
Thus, we assume that $G^1_{\infty}\neq0$ and  obtain the equation
\begin{equation}\label{roots}
\frac{1}{G_{\infty}}   =\frac{1}{\frac{\varepsilon}{2}}\frac{1}{(\frac{\varepsilon}{2}-\mu)  }+\frac{1}{\frac{\varepsilon}{2}}\frac
{1}{(\mu+\frac{\varepsilon}{2})  }+\frac{1}{(1-\frac{\varepsilon}{2})  }\frac{1}{(1-\frac{\varepsilon}{2}-\mu)  }+\frac{1}{(1-\frac{\varepsilon}{2})  }
\frac{1}{(\mu+1-\frac{\varepsilon}{2})  }+ J(\mu,\eps)\,.
\end{equation}
To compute $J$ we use the change of variables $y=e^{Y},z=e^{Z}$
and, using also Fubini,  obtain
\begin{align*}
J(\mu,\eps)& =\int_{0}^{1}\frac{dz}{z^{2-\frac{\varepsilon}{2}}}\int_{1-z}^{1}%
y^{-\frac{\varepsilon}{2}+\mu}dy+\int_{0}^{1}\frac{dz}{z^{2-\frac{\varepsilon
}{2}-\mu}}\int_{1-z}^{1}y^{-\frac{\varepsilon}{2}}dy\\
& \qquad  +\int_{0}^{1}\frac{dz}{z^{\frac{\varepsilon}{2}+1}}\int_{1-z}^{1}%
y^{\frac{\varepsilon}{2}+\mu-1}dy+\int_{0}^{1}\frac{dz}{z^{\frac{\varepsilon
}{2}+1-\mu}}\int_{1-z}^{1}y^{\frac{\varepsilon}{2}-1}dy\\
& =\frac{1}{(\frac{\varepsilon}{2}-1)  }\int_{0}^{1}\frac{d}{dz}\big(  z^{\frac{\varepsilon}{2}-1}\big)  dz\int_{1-z}^{1}%
y^{-\frac{\varepsilon}{2}+\mu}dy+\frac{1}{\frac{\varepsilon}{2}+\mu-1}\int
_{0}^{1}\frac{d}{dz}\big(  z^{\frac{\varepsilon}{2}+\mu-1}\big)
dz\int_{1-z}^{1}y^{-\frac{\varepsilon}{2}}dy\\
& \qquad -\frac{1}{\frac{\varepsilon}{2}}\int_{0}^{1}\frac{d}{dz}\big(z^{-\frac{\varepsilon}{2}}\big)  dz\int_{1-z}^{1}y^{\frac{\varepsilon}%
{2}+\mu-1}dy+\frac{1}{\mu-\frac{\varepsilon}{2}}\int_{0}^{1}\frac{d}%
{dz}\big(  z^{\mu-\frac{\varepsilon}{2}}\big)  dz\int_{1-z}^{1}%
y^{\frac{\varepsilon}{2}-1}dy
\end{align*}

We can now integrate by parts. There is no contribution from $z=0$ if
$\operatorname{Re}(\mu)  >-\frac{\varepsilon}{2},$ but there is a
contribution from the boundary term with $z=1.$ We  obtain 
\begin{align*}
J(\mu,\eps)& =\frac{1}{(  \frac{\varepsilon}{2}-1) (  1-\frac{\varepsilon}{2}+\mu)  }+\frac{1}{(\frac{\varepsilon}{2}-1)  }B\big(  \frac{\varepsilon}{2},\mu-\frac{\varepsilon}{2}+1\big)  \\
& \qquad +\frac{1}{(  \frac{\varepsilon}{2}+\mu-1) (1-\frac{\varepsilon}{2})  }+\frac{1}{\frac{\varepsilon}{2}+\mu-1}B\big(\frac{\varepsilon}{2}+\mu,1-\frac{\varepsilon}{2}\big)  \\
& \qquad -\frac{1}{\frac{\varepsilon}{2}(\frac{\varepsilon}{2}+\mu)}-\frac{1}{\frac{\varepsilon}{2}}B\big(  1-\frac{\varepsilon}{2},\frac{\varepsilon}{2}+\mu\big)
+\frac{1}{(\mu-\frac{\varepsilon}{2})  \frac{\varepsilon}{2}  }+\frac{1}{\mu-\frac{\varepsilon}{2}}B\big(\mu-\frac{\varepsilon}{2}+1,\frac{\varepsilon}{2}\big)
\end{align*}
 if $\mbox{Re}(\mu)>-\frac{\eps}{2}$,
where $B$ denotes the Beta-function \cite{AbSt64}.
Then \eqref{roots} becomes, after several cancellations,
\begin{align*}
\frac{1}{G_{\infty}}  & =\frac{1}{(  \frac{\varepsilon}{2}-1)}B\big(  \frac{\varepsilon}{2},\mu-\frac{\varepsilon}{2}+1\big)  +\frac{1}{\frac{\varepsilon}{2}+\mu-1}B\big(  \frac{\varepsilon}{2}+\mu
,1-\frac{\varepsilon}{2}\big)  \\
& \qquad -\frac{1}{\frac{\varepsilon}{2}}B\big(  1-\frac{\varepsilon}{2},\frac{\varepsilon}{2}+\mu\big)  +\frac{1}{\mu-\frac{\varepsilon}{2}%
}B\big(  \mu-\frac{\varepsilon}{2}+1,\frac{\varepsilon}{2}\big)
\end{align*}
and \eqref{Ginftyformula}  gives, using the $\Gamma$-function,
that
}
\begin{equation}\label{roots1}
 \begin{split}
\frac{\Gamma\big(  1-\frac{\varepsilon}{2}\big)  \Gamma\big(\frac{\varepsilon}{2}\big)  \Gamma ( 1+\mu)  }{(1-\frac{\varepsilon}{2})  \frac{\varepsilon}{2}}  & =\Gamma\Big(1-\frac{\varepsilon}{2}+\mu\Big)  \Gamma\Big(  \frac{\varepsilon}%
{2}\Big)  \frac{(  1-\mu)  }{(  1-\frac{\varepsilon}{2})(  \frac{\varepsilon}{2}-\mu)  }\\
& \qquad  +\Gamma\Big(  \frac{\varepsilon}{2}+\mu\Big)  \Gamma\Big(1-\frac{\varepsilon}{2}\Big)  \frac{(1-\mu)  }{\frac
{\varepsilon}{2}\big(  1-\frac{\varepsilon}{2}-\mu\big)  }%
\end{split}
\end{equation}

We state here a result about the roots of \eqref{roots1}, whose proof is given in the Appendix.

\begin{lemma}\label{L.roots}
 Equation \eqref{roots1} has exactly three roots with $\mbox{Re}(\mu)\geq 0$, denoted by $\mu^{\pm}$ and $\mu_{\ast}$ respectively. 
 Their asymptotics are given by
\begin{equation}\label{muone}
\mu^{\pm}=\frac{\varepsilon^{2}}{8}\pm i \big( \frac{\varepsilon}{2}%
+\frac{\varepsilon^{2}}{8}\big) +O\big(  \varepsilon^{3}\big)
\qquad \mbox{ and } \qquad \mu_{\ast}\simeq1-\frac{\varepsilon}{2}\ \ \text{as\ \ }\varepsilon
\rightarrow0 \,.
\end{equation}
\end{lemma}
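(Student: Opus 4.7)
The plan is to recast \eqref{roots1} in a convenient form and then locate its zeros by perturbative expansions together with a global counting argument. Setting $a=\varepsilon/2$ and dividing both sides by $\Gamma(1-a)\Gamma(a)/((1-a)a)$, the equation becomes
\begin{equation*}
\Gamma(1+\mu)=\frac{\Gamma(1-a+\mu)}{\Gamma(1-a)}\cdot\frac{a(1-\mu)}{a-\mu}+\frac{\Gamma(a+\mu)}{\Gamma(a)}\cdot\frac{(1-a)(1-\mu)}{1-a-\mu}\,.
\end{equation*}
Using the identity $\Gamma(a+\mu)/\Gamma(a)=\frac{a}{a+\mu}\cdot\Gamma(1+a+\mu)/\Gamma(1+a)$ one sees that the right-hand side is meromorphic in $\mu$ with simple poles in $\{\mbox{Re}\,\mu\ge 0\}$ only at $\mu=a$ and $\mu=1-a$; this pole structure dictates where the three roots must sit.

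For the small roots $\mu^\pm$, I would substitute $\mu=O(\varepsilon)$ and expand both sides. Since the two Gamma-ratios satisfy $\Gamma(1\pm a+\mu)/\Gamma(1\pm a)=1+O(a+|\mu|)$, the leading order equation reduces to
\begin{equation*}
1=\frac{a}{a-\mu}+\frac{a}{a+\mu}=\frac{2a^2}{a^2-\mu^2}\,,
\end{equation*}
whose solutions are $\mu=\pm i a=\pm i\varepsilon/2$. Refining with the next-order terms, namely the digamma correction $\psi(1)=-\gamma$ appearing in $\Gamma(1\pm a+\mu)/\Gamma(1\pm a)=1-\gamma\mu+O(a\mu,\mu^2)$ together with the expansion of $(1-a)/(1-a-\mu)=1+\mu+O(a\mu,\mu^2)$, and matching real and imaginary parts of the resulting equation at order $a^2$, one determines the correction $\omega$ in $\mu=\pm ia+\omega$ to be $\omega=(1\pm i)a^2/2$, giving $\mu^\pm=\frac{\varepsilon^2}{8}\pm i\bigl(\frac{\varepsilon}{2}+\frac{\varepsilon^2}{8}\bigr)+O(\varepsilon^3)$. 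For $\mu_*$, I would write $\mu=1-a+\delta$ with $\delta$ small. The second right-hand-side term then carries a simple pole: using $\Gamma(a)\sim 1/a$ it behaves like $-a^2/\delta\cdot(1+O(a,\delta))$, whereas the left-hand side and the first right-hand-side term stay $O(1)$; balancing forces $\delta=O(a^2)$, hence $\mu_*=1-\varepsilon/2+O(\varepsilon^2)$.

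The main obstacle is excluding any further roots in $\{\mbox{Re}\,\mu\ge 0\}$. For this I would apply the argument principle on a half-disc $D_R=\{|\mu|\le R,\ \mbox{Re}\,\mu\ge 0\}$ with $R\gg 1/\varepsilon$. By the Stirling ratio asymptotics $\Gamma(\mu+\alpha)/\Gamma(\mu+\beta)\sim\mu^{\alpha-\beta}$ as $|\mu|\to\infty$ in the right half-plane, dividing the displayed equation by $\Gamma(1+\mu)$ shows that the right-hand side decays like $\mu^{-a}+\mu^{a-1}\to 0$, so no zeros occur for $|\mu|$ sufficiently large and only a bounded region must be controlled. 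Within this region, conjugate symmetry (all coefficients are real, so non-real roots come in pairs) together with a direct sign analysis of the real restriction of LHS$-$RHS on $[0,\infty)$ locates a single sign change across the pole at $\mu=1-a$ and no other, isolating $\mu_*$; and a Rouch\'e comparison on a disc of radius $\sim\varepsilon$ around the origin against the leading-order polynomial $a^2-\mu^2-2a^2$ yields exactly the two simple roots $\mu^\pm$. Putting these together gives the claimed count of three zeros in $\{\mbox{Re}\,\mu\ge 0\}$.
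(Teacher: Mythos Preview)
Your approach uses essentially the same toolkit as the paper---Stirling-type bounds on Gamma ratios, perturbative expansion near $\mu=0$ and $\mu=1-\varepsilon/2$, and Rouch\'e's theorem for the local root count---and your asymptotic computations for $\mu^{\pm}$ and $\mu_{\ast}$ are correct and match the paper's comparison functions.

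There is, however, a genuine gap in your global exclusion step. After ruling out zeros for large $|\mu|$, you handle the remaining bounded region by (i) a sign analysis of the real restriction, isolating the single real root $\mu_{\ast}$, and (ii) a Rouch\'e count on a disc $|\mu|\lesssim\varepsilon$, isolating the pair $\mu^{\pm}$. Neither of these excludes additional \emph{complex} conjugate pairs sitting in the intermediate region $\{\varepsilon\ll|\mu|,\ |\mu-1|\gtrsim\delta,\ \operatorname{Re}\mu\ge 0\}$; conjugate symmetry only tells you such roots would come in pairs, not that they are absent. The paper closes this directly: writing the equation as $\omega(\mu,\varepsilon)=0$ (equation \eqref{roots2}), it observes that on $\{\operatorname{Re}\mu\ge 0,\ |\mu|\le R,\ \min(|\mu|,|\mu-1|)\ge\delta\}$ the last two summands of $\omega$ remain uniformly bounded while the first summand, carrying the factor $1/\varepsilon$ together with $|\Gamma(1+\mu)|\ge C_{\ast}>0$, diverges as $\varepsilon\to 0$. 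In your normalisation this is the statement that the right-hand side of your displayed equation, divided by $\Gamma(1+\mu)$, is not merely decaying at infinity but is in fact $O(\varepsilon)$ \emph{uniformly} on that whole set, since both summands carry an explicit prefactor $\sim a=\varepsilon/2$ (from $a(1-\mu)/(a-\mu)$ in the first and from $1/\Gamma(a)\sim a$ in the second). Adding this single observation forces all roots into the $\delta$-neighbourhoods of $0$ and $1$, after which your Rouch\'e arguments complete the proof. As a side remark, this also shows that a fixed radius $R\ge 3$ suffices, rather than $R\gg 1/\varepsilon$.
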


The linearization of the problem \eqref{ODE1}-\eqref{Ident} suggests that there exists
a three-dimensional family of solutions which indeed can be characterized by the asymptotics
\eqref{GasymptModified}
with free constants $L>0, \varphi \in [0,2\pi)$ and $C_{\ast}\in \R$.

For any choice of $\varphi, L$ and $C_*$ the solution of \eqref{ODE1}-\eqref{Ident}  is uniquely determined. In order to find a traveling wave 
solution to our original problem, we need to find
$\varphi, L$ and $C_*$ such that $B(x)\to 0$ and $Q(x)\to 0$ as $x \to \infty$.
However, due to the translation invariance
of \eqref{A1equation} we have  for any $a \in \R$ that
$ \tilde G_a(x):= G(x+a) $ gives the same solution up to translations, such that, denoting $\mu^+=\mu_1+i \mu_2$, 
\[
 \tilde G_a(x) \sim G_{\infty} + \mbox{Re} \Big( Le^{\mu_1 a} e^{i(\varphi + \mu_2 a)} e^{\mu_+ x}\Big) + \tilde \psi(x)+ C_*e^{\mu_*a} e^{\mu_*x} \qquad 
 \mbox{ as } x \to - \infty\,.
\]
Hence, we can identify solutions of \eqref{ODE1}-\eqref{Ident} up to translations with the set of real numbers contained between two consecutive intersections of the spiral
$\{ Le^{\mu^+a}\,|\,a \in \R\}$ with the real axis. If $L=1$, the next larger point on the real axis is $e^{\frac{2\pi\mu_1}{\mu_2}}$. 
Therefore we can assume that $\varphi=0$ and $L \in \big [1,e^{\frac{2\pi\mu_1}{\mu_2}}\big )\approx [1,e^{\frac{\pi \eps}{2}})$. Notice that if the value of 
$L$ is modified from $L$ to $L e^{\frac{2\pi\mu_1}{\mu_2}}$
the value of $C_*$ is modified to $C_*e^{\frac{2\pi\mu_*}{\mu_2}}$.

Thus, finding a nonnegative solution $G(x)$ to the original problem \eqref{A1equation} with $G(x) \to 0$ as $x \to \infty$ is equivalent to finding a solution to \eqref{ODE1}-\eqref{Ident}
with the asymptotics \eqref{GasymptModified} with $\varphi=0$, $L \in [1,e^{\frac{2\pi\mu_1}{\mu_2}})$ and $C_* \in \R$  such that $B(x), Q(x) \to 0$ as $x \to \infty$.

Notice that in the formula \eqref{GasymptModified} the second term changes on scales of order $\frac{1}{\eps}$,
 whereas the last term changes on scales of order one. This is a consequence of the fact that the characteristic length scales in the equations for $A$ and $B$
 in \eqref{ODE1} are different from the ones for $P$ and $Q$ in \eqref{ODE2}. In particular, the equation for the variable $Q$ indicates that $Q$ separates
 in scales of order one from $G$. This has the consequence  that $C_*$ must be very small, because otherwise the fast growth of $Q$ will produce a change of sign
 either for $Q$ or $B$ as we will see later. 
 Even though $C_*$ is very small it must be tuned in a  careful way because otherwise the above mentioned instability in \eqref{ODE2} will cause the solution to change sign.

 We will postpone the detailed analysis of the effect of the term $C_* e^{\mu_* x}$ to Section \ref{S.fast} and consider now solutions to \eqref{ODE1} and \eqref{ODE2}
 with $(1-\frac{\eps}{2})Q(x)\approx G(x)$. This reduces the two-dimensional shooting problem to a one-dimensional one with shooting parameter $L$.

\subsection{The Lotka-Volterra regime.}\label{Ss.lotkavolterra}

It is possible to derive an ODE approximation of (\ref{A1equation}) which is valid
as long as $G$ remains much
smaller than one. Since $G_{\infty }$ is of order $\varepsilon
^{2} $
we introduce the following rescaling
\begin{equation}\label{rescaling1}
G=G_{\infty}g,\quad  A=A_{\infty}a,\quad  B=B_{\infty}b,\quad  P=P_{\infty}p,\quad Q=Q_{\infty}q,\quad
x=\frac{u}{\varepsilon},\quad y=\frac{v}{\varepsilon},\quad z=\frac{w}{\varepsilon},
\end{equation}
where $A_{\infty},B_{\infty}, P_{\infty}$ and $Q_{\infty}$ are as in
(\ref{Limits}). Then (\ref{ODE1})-(\ref{Ident}) become
\begin{align}
\frac{da}{du}   =\frac{1}{2}(a+g) \,, \qquad \quad  &\frac{db}{du}=\frac{1}{2}(b-g) \label{FastODE1}\\
\frac{dp}{du}   =\Big(  \frac{1}{\varepsilon}-\frac{1}{2}\Big)
(-p+g) \,, \qquad \quad  & \frac{dq}{du}=\Big( \frac{1}{\varepsilon}-\frac{1}{2}\Big) (q-g) \label{FastODE2}%
\end{align}
and
\begin{equation}
g=\frac{4G_{\infty}}{\varepsilon^{2}}a b+\frac{4G_{\infty}}{(2-\varepsilon)^{2}}p q+\frac{G_{\infty}}{\varepsilon^{2}}%
\bar{J}[g] (u) \,, \label{LambdaIdent}%
\end{equation}
where
\begin{equation}
\bar{J}[g](u)  =\int_{-\infty}^{0}%
dv\int_{\varepsilon\ln(  1-e^{\frac{v}{\varepsilon}})  }%
^{0}dw\Big(  e^{(  \frac{1}{\varepsilon}-\frac{1}{2}) (v-w)  }+e^{\frac{1}{2}(v-w)  }\Big)
g(v+u)  g(w+u)
\label{JKernel}%
\end{equation}

\begin{lemma}\label{L.odeapproximation}
Under the assumption that $g \to 1$ as $u \to -\infty$ and 
\begin{equation}\label{assumption1}
 |a(u)-1| + |b(u)-1| + |p(u)-1| + |q(u)-1| \leq C \|g -1 \|_{u}\,, \qquad   \Big |\frac{dg}{du}\Big| \leq C \Vert g -1 \Vert_{u}
\end{equation}
where $\|f \|_{u}:=\sup_{s \leq u} |f(s)|$,
we can rewrite \eqref{LambdaIdent} as the system of ODEs \eqref{FastODE1} together with
\begin{equation}
g ( u ) =a( u ) b( u ) -\frac{\varepsilon }{2}\big( a( u ) b( u ) -1\big) +\frac{\varepsilon }{2}\big(g ( u )
a(u)  -1\big) + R(u)\ \label{lambdaapprox}
\end{equation}
with $|R(u)| \leq C \eps^2 \big(1+ \Vert g  \Vert_{u} \big)\Vert g -1\Vert_{u}$.
\end{lemma}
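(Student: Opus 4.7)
The strategy exploits the scale separation inherent in \eqref{LambdaIdent}: the variables $p$ and $q$ obey ODEs with fast relaxation rate $\tfrac{1}{\eps}-\tfrac{1}{2}$ and so are adiabatically slaved to $g$, whereas $a, b$ evolve on the $O(1)$ timescale and carry genuine nonlocal information about $g$. The plan has three steps.

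\textbf{Step 1 (elimination of $p,q$).} Solving the linear ODEs \eqref{FastODE2} by integrating factors gives explicit convolution representations
\[
p(u)=\Big(\tfrac{1}{\eps}-\tfrac{1}{2}\Big)\int_0^\infty e^{-(\frac{1}{\eps}-\frac{1}{2})\tau}g(u-\tau)\,d\tau,
\]
and a single integration by parts, using $|g'|\leq C\|g-1\|_u$, yields $p(u)=g(u)+O(\eps\|g-1\|_u)$ and analogously $q(u)=g(u)+O(\eps\|g-1\|_u)$. Combined with $\tfrac{4G_\infty}{(2-\eps)^2}=O(\eps^2)$ from \eqref{Ginftyasymptotics}, the $pq$-term in \eqref{LambdaIdent} is a perturbation of a size that will fit in $R$.

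\textbf{Step 2 (coefficient expansion).} Using $\tfrac{4G_\infty}{\eps^2}=1-\tfrac{\eps}{2}+O(\eps^2)$, \eqref{LambdaIdent} reduces to
\[
g(u)=\Big(1-\tfrac{\eps}{2}\Big)a(u)b(u)+\tfrac{G_\infty}{\eps^2}\bar J[g](u)+R_1(u),
\]
with a controlled remainder $R_1$.

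\textbf{Step 3 (expansion of $\bar J[g]$).} The crux is to establish
\[
\tfrac{G_\infty}{\eps^2}\bar J[g](u)=\tfrac{\eps}{2}\bigl(g(u)a(u)-1\bigr)+\tfrac{\eps}{2}+R_2(u)
\]
with $R_2$ admissible. Split the kernel into its fast part $e^{(\frac{1}{\eps}-\frac{1}{2})(v-w)}$ and slow part $e^{(v-w)/2}$, writing $\bar J=\bar J_1+\bar J_2$. In $\bar J_1$, perform the $v$-integral first for fixed $w$: because the weight $e^{(\frac{1}{\eps}-\frac{1}{2})v}$ is sharply concentrated near $v=0$, the replacement $g(v+u)\approx g(u)$ introduces an error $O(\eps\|g-1\|_u)$, and the $v$-integral evaluates explicitly to
\[
\int_{\eps\ln(1-e^{w/\eps})}^0 e^{(\frac{1}{\eps}-\frac{1}{2})(v-w)}\,dv=\frac{\eps}{1-\eps/2}\Bigl[1-(1-e^{w/\eps})^{1-\eps/2}\Bigr]e^{-(\frac{1}{\eps}-\frac{1}{2})w}.
\]
For $w$ bounded away from zero on the scale $\eps$, the factor $(1-e^{w/\eps})^{1-\eps/2}$ is exponentially close to $1$, and the combined $w$-kernel reduces to $(1-\tfrac{\eps}{2})\,e^{w/2}$, which is exactly the weight defining $a(u)$ via \eqref{Int1}. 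The remaining $w$-integral thus reconstructs $2a(u)$, giving $\bar J_1[g]\approx 2\eps\,g(u)\,a(u)$ and hence $\tfrac{G_\infty}{\eps^2}\bar J_1[g]\approx \tfrac{\eps}{2}g(u)a(u)$. The slow piece $\bar J_2$ is of higher order and feeds into $R$. The constant $\tfrac{\eps}{2}$ on the right-hand side is pinned down by the normalisation \eqref{Ginftydef}. Adding the expansions from Steps 2 and 3 and using the algebraic identity $(1-\tfrac{\eps}{2})ab+\tfrac{\eps}{2}ga=ab-\tfrac{\eps}{2}(ab-1)+\tfrac{\eps}{2}(ga-1)$ yields \eqref{lambdaapprox}.

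\textbf{Main obstacle.} The heart of the argument is Step 3: recognising the $g\cdot a$ (rather than $g^2$) structure as the leading nonlocal contribution. This requires the correct order of integration ($v$ first, then $w$) and the observation that the fast-kernel $v$-integral, after cancellation with the constraint $w\geq\eps\ln(1-e^{v/\eps})$, produces exactly the slow weight $e^{w/2}$ that defines $a(u)$. Carefully handling the transitional region $w=O(\eps)$, where $(1-e^{w/\eps})^{1-\eps/2}$ is not negligible, and using the exact normalisation identity \eqref{Ginftydef} to cancel constant terms are the principal technical ingredients.
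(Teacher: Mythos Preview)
Your proposal captures the key structural insight — that the fast-kernel piece of $\bar J$ reproduces the weight $e^{w/2}$ defining $a(u)$, hence yields the nonlocal term $\tfrac{\eps}{2}ga$ — and this matches the paper's argument. The order of integration and the treatment of the $v$-integral in your Step~3 are essentially what the paper does for the term it calls $(III)_a$.

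However, there is a genuine organisational gap in how you handle constant terms, and it breaks the stated bounds on $R_1$ and $R_2$. In Step~1/2 you absorb $\tfrac{4G_\infty}{(2-\eps)^2}pq$ and the $O(\eps^2)$ correction to $\tfrac{4G_\infty}{\eps^2}$ directly into $R_1$, claiming these ``fit in $R$''. But $\tfrac{4G_\infty}{(2-\eps)^2}pq = O(\eps^2)(1+\|g\|_u)^2$, not $O(\eps^2)(1+\|g\|_u)\|g-1\|_u$: at $g\equiv 1$ this term equals the nonzero constant $\tfrac{4G_\infty}{(2-\eps)^2}$, whereas the required bound forces $R=0$. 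The same defect afflicts your $R_2$: evaluating your Step~3 identity at $g\equiv 1$ gives $R_2 = \tfrac{G_\infty}{\eps^2}\bar J[1]-\tfrac{\eps}{2}$, which is an $O(\eps^2)$ constant, not zero. The two constants do cancel in $R_1+R_2$ by the normalisation, but your writeup asserts the bound on each piece separately, which is false.

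The paper avoids this by invoking the normalisation \eqref{Ginftydef} \emph{first}: it writes
\[
g = ab + \Big(\tfrac{4G_\infty}{\eps^2}-1\Big)(ab-1) + \tfrac{4G_\infty}{(2-\eps)^2}(pq-1) + \tfrac{G_\infty}{\eps^2}\!\int\!\!\int\!\big(\cdots\big)\big(g(v{+}u)g(w{+}u)-1\big),
\]
so that every term beyond $ab$ carries an explicit factor that vanishes at $g\equiv 1$. Then assumption \eqref{assumption1} gives $|pq-1|\leq C(1+\|g\|_u)\|g-1\|_u$ directly, and the $pq$-term is bounded without any use of $p=g+O(\eps\|g-1\|_u)$; your Step~1 estimate is correct but unnecessary. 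The fast-kernel analysis then proceeds on $g(v{+}u)g(w{+}u)-1$ rather than $g(v{+}u)g(w{+}u)$, producing $\tfrac{\eps}{2}(ga-1)$ with an admissible remainder. The fix to your argument is simply to move the subtraction of constants from the end to the beginning.
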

\begin{proof}
To approximate $g$ we use \eqref{Int1} 
and \eqref{Ginftydef} in \eqref{LambdaIdent} to find
\begin{align*}
g (u) =&a(u)  b(u) 
+ \Big( \frac{4 G_{\infty}}{\eps^2}-1\Big) \big(a(u)b(u)-1\big) + \frac{4G_{\infty}}{(2-\eps)^2} \big(p(u) q(u) -1 \big) \\
&+\frac{G_{\infty }}{\varepsilon ^{2}}\int_{-\infty }^{0}dv
\int_{\varepsilon \ln ( 1-e^{\frac{v }{\varepsilon }})
}^{0}dw \Big( e^{( \frac{1}{\varepsilon }-\frac{1}{2})( v -w) }+e^{\frac{1}{2}( v -w) }%
\Big) \big( g ( v +u ) g ( w +u ) -1\big)\\
&=: a(u) b(u)+ (I)+(II)  + (III)_a + (III)_b\,.
\end{align*}
We first note that
\begin{equation}\label{lambdaest}
  \big| g ( v +u ) g ( w +u ) -1\big|\leq C \big(1+ \Vert g \Vert_{u} \big)\Vert g -1\Vert_{u}\,.
\end{equation}
With the assumption \eqref{assumption1} we have
$|(II)|
\leq C\varepsilon ^{2}\big(1+ \Vert g  \Vert_{u} \big)\Vert g -1\Vert_{u}$.
Furthermore, recalling  \eqref{Ginftyasymptotics}, we have
$\frac{G_{\infty }}{\varepsilon^{2}}-\frac{1}{4} =-\frac{\varepsilon }{8}
+( \varepsilon ^{2})$, whence
as a consequence we find 
\begin{align*}
(I)
&=-\frac{\varepsilon }{2}\big( a( u ) b( u ) -1\big) +O\big( \varepsilon ^{2}\Vert g -1\Vert_{u}\big(1+ \Vert g  \Vert_{u} \big) \big)\,.
\end{align*}
In addition we can estimate
\begin{align*}
|(III)_b|
&\leq C\big(1+ \Vert g  \Vert_{u} \big)\Vert g -1\Vert_{u} \int_{-\infty }^{0}dv
\int_{\varepsilon \ln ( 1-e^{\frac{v }{\varepsilon }})}^{0}dw e^{\frac{1}{2}(v -w) } \\
&
=C\varepsilon ^{2}\big(1+ \Vert g  \Vert_{u} \big)\Vert g -1\Vert_{u} \int_{-\infty
}^{0}dv \int_{\ln ( 1-e^{v }) }^{0}dw e^{\frac{%
\varepsilon }{2}(v -w ) }\,
\\ & \leq C\varepsilon ^{2}\big(1+ \Vert g  \Vert_{u} \big)\Vert g -1\Vert_{u}
\end{align*}
since
$ \int_{-\infty
}^{0}dv \int_{\ln ( 1{-}e^{v }) }^{0}dw e^{\frac{%
\varepsilon }{2}(v -w) } = \int_0^1 dy y^{\frac{\eps}{2}-1} \int_{1-y}^1 dz  z^{-(1+\frac{\eps}{2})} \leq C$.

We finally claim that
\begin{equation}\label{3aclaim}
 (III)_a = \frac{\eps}{2}\big(g a -1\big) + O\Big(\eps^2\big(1+ \Vert g  \Vert_{u} \big)\Vert g -1\Vert_{u}\Big )\,.
\end{equation}
Indeed, after changing the order of integration we obtain
\begin{equation*}
(III)_a=\frac{G_{\infty }}{\varepsilon ^{2}}\int_{-\infty }^{0}dw
\int_{\varepsilon \ln ( 1-e^{\frac{w }{\varepsilon }})
}^{0}dv e^{( \frac{1}{\varepsilon }-\frac{1}{2})(v-w }\big( g ( v +u ) g ( w
+u ) -1\big)\,.
\end{equation*}
In the region $w \geq -\eps$ we use \eqref{lambdaest} and the fact that $\int_{-\eps }^{0}dw
\int_{\varepsilon \ln( 1-e^{\frac{w }{\varepsilon }})
}^{0}dv e^{( \frac{1}{\varepsilon }-\frac{1}{2})( v-w) } \leq C\eps^2$ which follows from scaling. Therefore we can replace the
integral $\int_{-\infty}^0dw$ by $\int_{-\infty}^{-\eps} dw$  introducing an error
of order $C\eps^2 \big(1+ \Vert g  \Vert_{u} \big)\Vert g -1\Vert_{u}$.

Furthermore, in the remaining integral, we can,  due to \eqref{Ginftyasymptotics},  replace the prefactor $\frac{G_{\infty}}{\eps^2}$ by $\frac 1 4$ and this yields
 an error of order $C\eps^2\big(1+ \Vert g  \Vert_{u} \big)\Vert g -1\Vert_{u} $
since $\int_{-\infty}^{-\eps} dw \int^{0} _{\eps \ln (1-e^{w/\eps})} dv e^{( \frac{1}{\varepsilon }-\frac{1}{2})(v-w) }\leq C \eps$. 

In the remaining integral we can also replace $e^{(\frac{1}{\eps} - \frac 1 2 )(v -w)}$ by $e^{-(\frac{1}{\eps} - \frac 1 2 )w}$, using $-\eps \leq v \leq 0$, Taylor expansion 
and the fact that $\int_{-\infty}^{-\eps} \int_{\eps \ln (1-e^{w/\eps})}^0 |v| e^{-(\frac{1}{\eps} - \frac 1 2 )w }\leq C \eps^3$. 
 This gives an additional error of the order
$C\eps^3\big(1+ \Vert g  \Vert_{u} \big)\Vert g -1\Vert_{u} $.
Finally, due to \eqref{assumption1} we have $|g(u+v)-g(u)| \leq C \|g -1\|_{u} |v|$, such that we can replace $g(u+v)$ by $g(u)$ introducing
another error of the order $C\eps^3\big(1+ \Vert g  \Vert_{u} \big)\Vert g -1\Vert_{u} $.
Thus, we have found
\[
 (III)_a = \frac{1}{4} \int_{-\infty}^{-\eps} dw \int^0_{\eps \ln (1-e^{w/\eps})} dv e^{-(\frac{1}{\eps} - \frac 1 2) w} (g(u) g(w + u)-1) 
 + O(\eps^2 \big(1+ \Vert g  \Vert_{u} \big)\Vert g -1\Vert_{u}\big).
\]
We can now integrate over $v$ and use $-e^{-\frac{w}{\eps}} \ln (1-e^{\frac{w}{\eps}}) = 1+ O(e^{\frac{w}{\eps}})$ for $w \leq - \eps$ to find
\[
 \begin{split}
  (III)_a&=  \frac{\eps}{4} \int_{-\infty}^{-\eps} dw e^{\frac{w}{2}} \big( g (u) g (u + w)-1\big) 
  + O(\eps^2 \big(1+ \Vert g  \Vert_{u} \big)\Vert g -1\Vert_{u}\big)\\
  &=  \frac{\eps}{4} \int_{-\infty}^{0} dw e^{\frac{w}{2}} \big( g (u) g (u + w)-1\big) 
  + O(\eps^2 \big(1+ \Vert g  \Vert_{u} \big)\Vert g -1\Vert_{u}\big)\,.
 \end{split}
\]
Since $g \to 1$ as $u \to -\infty$ and since $a$ is bounded, we find $a(u) = \frac{1}{2} \int_{-\infty}^0 dw e^{\frac{w}{2}} g (u + w)$ and the claim \eqref{3aclaim}
follows from the previous formula.
\end{proof}

Lemma \ref{L.odeapproximation} suggests to approximate problem \eqref{FastODE1}-\eqref{LambdaIdent} by the system of ODEs
\begin{align}
\frac{da}{du} &  =\frac{1}{2}\Big (  -a+ab - \frac{\eps}{2}ab(1-a)\Big)\label{aode}\\
\frac{db}{du}&=\frac{1}{2}\Big(  b- ab +\frac{\eps}{2}ab(1-a)\Big) \label{bode}\\
\frac{dp}{du} &  =\Big(  \frac{1}{\varepsilon}-\frac{1}{2}\Big) 
\Big(  -p+ ab - \frac{\eps}{2}ab(1-a)\Big) \label{ellode} \\
\frac{dq}{du}&=\Big(  \frac
{1}{\varepsilon}-\frac{1}{2}\Big) \Big(  q-ab + \frac{\eps}{2} ab(1-a)\Big) \label{node}%
\end{align}
Linearizing this system around the solution $a=b=p=q=1$ we find a three-dimensional unstable manifold characterized by the eigenvalues 
$\pm \frac{i}{2} + \eps \big( \frac{1}{8} \pm \frac{i}{2}\big)$ and $\big( \frac{1}{\eps}-\frac{1}{2}\big)$. Recalling the rescaling \eqref{rescaling1}, this agrees with the results 
of Lemma \ref{L.roots}.

Notice that the third eigenvalue is much larger than the other two, which is  related to the fact, that the solutions to \eqref{ellode} and \eqref{node} change
on a faster  scale. As discussed before, we restrict  ourselves for the moment to solutions for which $p=g + O(\eps)$ and $q = g + O(\eps)$.
In particular this implies, that $p$ and $q$ satisfy the assumptions of Lemma \ref{L.odeapproximation} if $a$ and $b$ do so. Therefore we consider  for the moment
solutions of \eqref{aode} and \eqref{bode}.

\subsubsection{Adiabatic increase of the amplitude.}\label{Sss.adiabatic}

We have seen that with increasing $u$ the trajectories follow almost elliptic curves, but spiral outwards in each round. This property does not only hold
for $a$ and $b$ close to one, but also when $|a-1|+|b-1|=O(1)$. 

We compute the increase of the amplitude of the spiral
by using an adiabatic approximation.
More precisely, we recall that 
the leading order of the system  \eqref{aode}-\eqref{bode} is 
\begin{equation}\label{leadingorderode}
\frac{da }{du } =-\frac{a }{2}+\frac{a b }{2} = \frac{a}{2}(b-1) \,,\qquad 
\frac{db  }{du } =\frac{b }{2}-\frac{ab}{2} = \frac{b}{2}(1-a)
\end{equation}
and the energy
\begin{equation}
 \label{energydef}
 E=a+b-\ln(ab) -2
\end{equation}
is conserved by \eqref{leadingorderode}.

We compute now the
change of energy for the perturbed problem. We have
$\frac{dE}{du } 
=-\Big( \frac{1-a}{a}\Big) \frac{da}{du }+\Big( \frac{1-b}{b}\Big) 
\frac{db}{du }$ and
using \eqref{aode} and \eqref{bode}
we obtain
\[\frac{dE}{du }=\frac{\varepsilon }{4}( b-a)(1-a)\,.\]

We need to estimate the change of
energy in a cycle where we use the approximation \eqref{leadingorderode}.
The change of the energy in a period is given by $\frac{\varepsilon }{4}D(E)$
where
\begin{equation}\label{changeofenergy}
D(E) =\int_{0}^{T(E) }( b-a)(1-a) du 
=2\int_{C(E) }(b-a) \frac{db}{b}\,,
\end{equation}
where we used  that $ (1-a) du =\frac{2}{b}db$
and where $C( E) $ denotes the contour in the plane $(a,b)$ associated to the trajectory with energy $E$.
We can parameterize it by  two curves \linebreak 
$\{ ( a_{+}(b) ,b) :b\in (b_{\max }( E) ,b_{\min }( E) )\}\cup
\{ ( a_{-}( b) ,b) :b\in( b_{\min }(E) ,b_{\max }( E) )\} .$ Then
\[
D( E) =-2\int_{b_{\min }(E) }^{b_{\max }(E) }(b-a_{+}(b)) \frac{db}{b}+2\int_{b_{\min
}( E) }^{b_{\max }( E) }(b-a_{-}(b) ) \frac{db}{b} 
=2\int_{b_{\min }(E) }^{b_{\max }(E) }( a_{+}( b) -a_{-}( b) ) \frac{db}{b}
\]
Since $a_{+}( b) >a_{-}( b) $ for any $b\in (b_{\min }(E) ,b_{\max }(E) )$ we find $D(E)>0$ and the energy increases.
We now compute  the
asymptotics of the solutions as $E\rightarrow 0$ and $E\rightarrow \infty$.
We recall that the contours $C(E) $ are defined by \eqref{energydef}.

We first consider the limit  $E \rightarrow 0$. In this regime 
 we obtain  the asymptotics
$a_{+}( b) \sim 1+\sqrt{2E -( b-1) ^{2}}$ and 
$a_{-}( b)  \sim 1-\sqrt{2E -(b-1)^{2}}$, 
as well as
$b_{\min }(E)  \sim 1-\sqrt{2E}$ and 
$b_{\max }(E)  \sim 1+\sqrt{2E }$. Furthermore we obtain 
$a_{+}(b) -a_{-}(b)  \sim 2\sqrt{2E -( b-1)^{2}}$. 
Therefore
\begin{align*}
D(E)  &=2\int_{b_{\min }(E) }^{b_{\max }(E) }( a_{+}( b) -a_{-}( b) ) \frac{db%
}{b}\sim 4\int_{b_{\min }( E) }^{b_{\max }( E) }\sqrt{2E -( b-1) ^{2}}db \\
&\sim 4\int_{-\sqrt{2E}}^{\sqrt{2E }}\sqrt{2E -x^{2}}%
dx=8E \int_{-1}^{1}\sqrt{1-x^{2}}dx=4\pi E 
\end{align*}%
since $\int_{-1}^{1}\sqrt{1-x^{2}}dx=\frac{1}{2}\pi$.
Notice that we thus recover the increase of the amplitude  of the oscillations for $a$ and $b$ that follow from 
\eqref{GasymptModified} and \eqref{muone}.


To examine the limit $E\to \infty$, 
 we will use the expression \eqref{changeofenergy} and estimate the different regions of $C(E)$ separately. This ensures, that the energy does not change too much during the cycle
of length 
$T(E)$.

We first consider the region where $b \leq 1$ and $a \in (a_*,E)$ with $1\ll a_* \ll E$. We denote by $b_*$ the value of $b$ such that $a_+(b_*)=a_*$.  Since in this region 
$E \approx a_*-\ln b_*$ this yields $b_* \approx e^{a_*-E}$. Then the contribution to $D(E)$ in this region is 
\[
  2\int_1^{b_*} (b-a) \frac{db}{b}= 2 \int_{b_*}^1 (a-b)\frac{db}{b} \approx 2 \int_{b_*}^1 \frac{a}{b}\,db
  \approx 2 \int_{b_*}^1 \frac{E+\ln b}{b}\,db \approx 2 \int_{b_*}^1 \frac{\ln b}{b}\,db \approx E^2
  \]
   as $E \to \infty$.
We claim that this  gives the main contribution to the change of the energy and are going to show that the remaining regions give smaller contributions 
as $E \to \infty$.

Indeed, consider now the part where $a \in (1,a_*)$. In this region $E \approx a-\ln b$ and if we denote by $\hat b$ the value of $b$ on the cycle that corresponds to $a=1$ we
have $b\approx e^{-E}$. Then  
\[
 2\int_{b_*}^{\hat b}(b-a) \frac{db}{b} \leq C \int_{\hat b}^{b_*} \frac{a}{b}\,db \leq C a_* \ln \hat b \leq a_* E \ll E^2\,.
\]

If both, $a$ and $b$ are small then $E \approx \ln a + \ln b$ and the contribution of the rate of change of energy is estimated by
\[
 C \int_{b_{\min}}^1 \frac{db}{b} \leq C \ln b_{\min} \leq CE \ll E^2\,.
\]

If $b \geq 1$ and if $a$ is small, then the contribution of the change of the energy on the other hand is estimated by
\[
 2\int_1^{b_{\max}}(b-a)\frac{db}{b} \leq C b_{\max} \leq CE\,.
\]
Finally, we consider  the regime where $a,b \geq1$, such that $E \approx a+b$. The corresponding part of $D(E)$ is given by
\[
 \int_{E}^1 (b-a) \frac{2}{b}\,db \approx 2\int_{E}^1 (2b-E)\frac{db}{b} \approx E \ln E \qquad \mbox{ as } E \to \infty\,.
\]

\ignore{

We first compute
the asymptotics of  $b_{\min }(E)$ and $b_{\max }(E)$ as $E \rightarrow \infty$.
We take $a=1$ in  \eqref{energydef} and obtain
\begin{equation}\label{bmaxequation}
\ln  b -b =-E -1\,.
\end{equation}
which  gives the asymptotics 
$b_{\max }(E) \sim 1+E + \ln E \qquad \mbox{ as } E \to \infty$.
We also have
\begin{equation*}
b_{\min }(E)  \sim e^{-E-1}\big( 1+O(e^{-E})\big) \qquad \mbox{ as } E \to \infty.
\end{equation*}
Using \eqref{energydef} and \eqref{bmaxequation} we find
\[
\ln \big( \frac{b}{b_{\max }(E) }\big) +b_{\max}(E)( 1-\frac{b}{b_{\max }(E) }%
)  =a-\ln a -1\,.
\]
Roughly speaking, in the region where $b\gg 1$ we have the
approximation
$a_{+}( b) \sim( b_{\max }(E) -b)$.
On the other hand, we have also the approximation
\begin{equation*}
-\ln ( a_{-}( b) ) \sim b_{\max }(E) -b\,, \qquad  b\gg 1\,,
\end{equation*}%
whence
\begin{equation*}
a_{-}( b) \sim \exp \left( -( b_{\max }(E) -b) \right) \,, \qquad b\gg 1\,.
\end{equation*}
We then have in the region where $b\gg 1$ 
 that $a_{+}(b) \gg a_{-}(b) $ (except if $\left( b_{\max }(E) -b\right) $
is of order one). We will use the approximation
\begin{align*}
\int_{1}^{b_{\max }(E) }\left( a_{+}(b)-a_{-}(b) \right) \frac{db}{b} &\sim \int_{1}^{b_{\max }(E )}a_{+}\left( b\right) \frac{db}{b}\sim \int_{1}^{b_{\max
}(E) }\left( b_{\max }(E) -b\right) 
\frac{db}{b} \\
&=b_{\max }(E) \int_{1}^{b_{\max }(E) }\left( 1-\frac{b}{b_{\max }(E) }\right) \frac{db}{b} \\
&\sim b_{\max }\left( \epsilon \right) \ln \left( b_{\max }(E) \right) \sim E \ln E \qquad \mbox{ as } E \to \infty.
\end{align*}

We now examine the integral in the region $b\ll 1$. As for \eqref{bmaxequation} we find
\begin{equation}\label{bminequation}
\ln a +\ln  b -( a+b) +1 =\ln \left( b_{\min }(E) \right) -b_{\min }(E) \,.
\end{equation}
In this case, if $b_{\min }(E) \leq b\ll 1$ we have the
approximation
\begin{equation*}
\ln \left( a\right) +\ln \left( b\right) -a=\ln \left( b_{\min }(E) \right) 
\end{equation*}
This equation has two solutions.
 Given that the region where $b$ is close to $b_{\min }(E) $ yields a large contribution, it might be
convenient to rewrite the integral with $b$ as a function of $a$. We will
use 
$\frac{b}{b_{\min }( \epsilon ) } =\frac{e^{a}}{a}$
in the following derivation.
We have
\begin{align*}
\int_{b_{\min }(E) }^{1}\left( a_{+}( b)-a_{-}( b) \right) \frac{db}{b} 
&=\int_{b_{\min }(E) }^{1}a_{+}( b) \frac{db}{b}-\int_{b_{\min }(E) }^{1}a_{-}(b) \frac{db}{b} \\
&=\int_{1}^{a_{+}\left( 1\right) }a\frac{db(a) }{b(a) }-\int_{1}^{a_{-}\left( 1\right) }a\frac{db( a) }{b(a) } \\
&=\int_{a_{-}\left( 1\right) }^{1}a\frac{db(a) }{b(a) }+\int_{1}^{a_{+}\left( 1\right) }a\frac{db( a) }{b( a) }\\
&=\int_{a_{-}\left( 1\right) }^{a_{+}\left( 1\right) }a\frac{db\left(
a\right) }{b\left( a\right) }=\int_{a_{-}\left( 1\right) }^{a_{+}\left( 1\right) }a\frac{1}{\frac{e^{a}%
}{a}}d\left( \frac{e^{a}}{a}\right)  \\
&=\int_{a_{-}\left( 1\right) }^{a_{+}\left( 1\right) }a\cdot d\left( \ln
\left( \frac{e^{a}}{a}\right) \right)  =\int_{a_{-}\left( 1\right) }^{a_{+}\left( 1\right) }a\cdot \left( 1-%
\frac{1}{a}\right) da \\
&=\frac{1}{2}\left[ \left( a_{+}(1) -1\right) ^{2}-\left(
a_{-}(1) -1\right) ^{2}\right] 
\end{align*}

Since $a_{+}(1) \gg 1$ and $a_{-}(1) \ll 1$ as $E \rightarrow \infty$ the asymptotics of this integral is
\begin{equation*}
\int_{b_{\min } (E) }^{1}\left( a_{+}( b)-a_{-}(b) \right) \frac{db}{b}\sim \frac{1}{2}\left(
a_{+}(1) -1\right) ^{2}\qquad \mbox{ as } E \to \infty \,.
\end{equation*}

On the other hand $a_{+}(1) $ can be approximated, using $b=1$ in equation \eqref{energydef}, to find
$\ln a -a+1 =-  E$, 
that is
$a_{+}( 1)  \sim  E$ as $E \rightarrow \infty$,
whence
\begin{equation*}
\int_{b_{\min }(E) }^{1}\left( a_{+}( b)-a_{-}( b) \right) \frac{db}{b}\sim \frac{1}{2}E^{2} \qquad \mbox{ as } E \to \infty\,. 
\end{equation*}%
Hence the main contribution is due to the region where $b\ll 1$ and
\begin{equation*}
G(E) \sim 2\int_{b_{\min }(E)
}^{1}\left( a_{+}( b) -a_{-}( b) \right) \frac{db}{b}%
\sim E^{2}\qquad \mbox{ as } E \to \infty\,.
\end{equation*}
}
Hence we have derived the approximation
\begin{equation}\label{energychange1}
E_{n+1}-E_{n}\sim \frac{\varepsilon }{4}( E_{n}) ^{2}
\qquad \mbox{ if } \left\vert E_{n}\right\vert \rightarrow \infty\,.
\end{equation}

This approximation is valid as long as $\eps E \ll 1$, that is $a+b \ll \frac{1}{\eps}$ and $a+b \gg e^{-\frac{1}{\eps}}$.

\subsection{Intermediate regime}\label{Ss.intermediate}

The intermediate regime is characterized by the energy $E$ becoming of order $\frac{1}{\eps}$. 
The energy, as defined in \eqref{energydef}, is given in terms of $A$ and $B$ as
$E= \frac{A}{A_{\infty}}+ \frac{B}{B_{\infty}} - \ln (AB) + \ln (A_{\infty}B_{\infty}) -2$. 
Since $A_{\infty}=O(\eps)$ and $B_{\infty}=O(\eps)$ (recall \eqref{Limits}), the fact that the energy is of order $\frac{1}{\eps}$ implies that either
$A=O(1)$, or $B=O(1)$, or $AB \ll1$.

The intermediate regime will be split in four different regions. When $A$ and $B$ are of order one, we can approximate the equation for $G$ by \eqref{A1equation} with $K$ given by
the additive kernel $K(\xi,\eta)=\xi+\eta$.  We call this the {\it kinetic regime}. 
If $A$ is of order one, but $B$ small we can approximate the equation by a simple ODE system for $A$ and $B$, see \eqref{ode1}, that gives an explicit
expression for $G$. If both, $A$ and $B$ are small, we approximate again by the Lotka-Volterra equation \eqref{leadingorderode}, while if $B$ is of order one, but $A$ small, we approximate
by another simple ODE system, see \eqref{ode2}.

\subsubsection{Overview of different regimes}
\label{Ss.overview}

\begin{figure}[h!]
  \centering
  \includegraphics[width=1\textwidth]{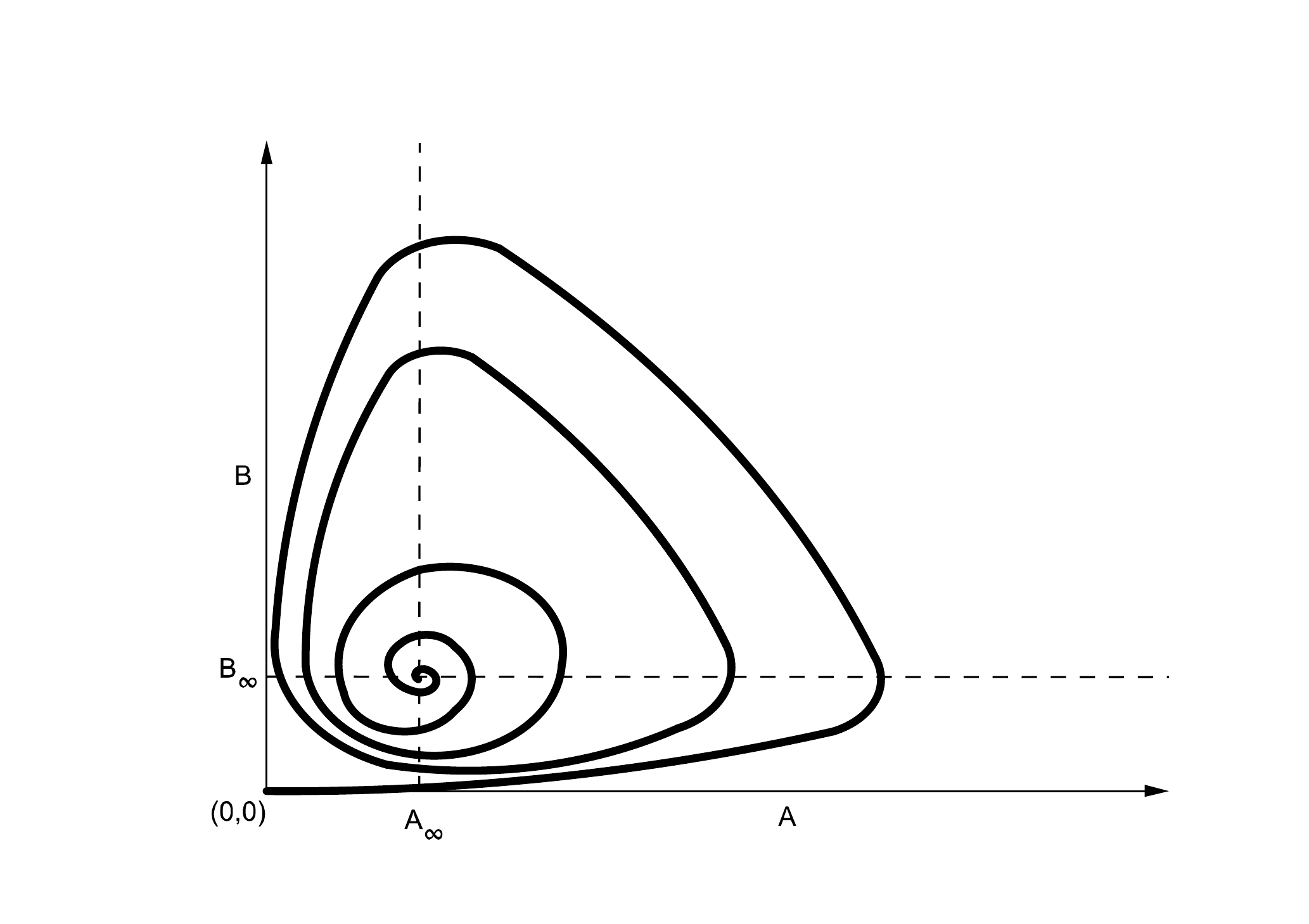}
  \caption{Cartoon of the solution in the $A-B$-plane.}
  \label{fig:phaseplane}
\end{figure}
If we make a plot of the solution in the $A$-$B$-plane (see Figure \ref{fig:phaseplane} for a cartoon), 
it turns out that also in the regime where $E =O(\frac{1}{\eps})$  the  curve ${A(x),B(x)}$ continues to spiral around the point $(A_{\infty},B_{\infty})$ with increasing amplitude in the clockwise
sense for increasing $x$. This curve intersects the line $A=A_{\infty}$ and $B=B_{\infty}$ at consecutive points $x=x_n$ and $x= {\bar x}_n$ where $B(x_n)$ and $A(\bar x_n)$ are 
of order one, respectively. 

In the interval $(x_n,{\bar x}_n)$ the solution can be approximated by a solution to the kinetic equation with additive kernel. In the interval $({\bar x}_n,x_{n+1})$ 
the solution will be approximated
by solutions of certain ODEs. We will see that the length of these intervals is  $|x_n-{\bar x}_n| =O(\ln \frac{1}{\eps})$, while $|{\bar x}_n -x_{n+1}|=O(\frac{1}{\eps^2})$.

\subsubsection{$A,B$ of order one: the kinetic regime}\label{Ss.kinetic}

We now consider equation \eqref{ODE1}-\eqref{Ident} for $\eps=0$,
that is
\begin{align}
\frac{dA(x)  }{dx}   =\frac{1}{2}G(x) \,,& \qquad  \frac{dB(x)  }{dx}%
=  -\frac{1}{2}G(x)
\label{A1}\\
\frac{dP(x)  }{dx}   =-P(x)  +G(x) \,,&\qquad \frac{dQ(x)}{dx}=  Q(x)  -G(x) \label{A2}%
\end{align}
and 
\begin{equation}
G(x)  =4A(x)  B(x)  +P(x)Q(x)  +J[G]  (x) \,,\label{A3}
\end{equation}
with $J[G](x)=\int_{-\infty}^0 dy\int_{\ln(1-e^y)}^{0} dz e^{y-z} G(y+x)G(z+x)$.
\begin{lemma}\label{L.kinetic}
 For any $\rho \in (0,2)$ there exists a continuous solution $(A_{\rho},B_{\rho}, P_{\rho}, Q_{\rho})$ to \eqref{A1}-\eqref{A3} such that the corresponding $G_{\rho}$ 
 satisfies  for $\rho \in (0,2)$, $\rho \not=1$,   the asymptotics 
 \begin{equation}\label{Grhominus}
 G_{\rho}(x) \sim \frac{\rho}{1+\rho} \frac{\sin\big( \frac{\pi \rho}{1+\rho}\big) \Gamma \big( \frac{1}{1+\rho}\big)}{\pi (1+\rho)} e^{\frac{\rho}{1+\rho}x} \qquad \mbox{ as } x \to -\infty
\end{equation}
and
\begin{equation}\label{Grhoplus}
 G_{\rho}(x) \sim \frac{\rho}{1+\rho} \frac{\Gamma(1+\rho)\sin(\pi(1-\rho))}{\pi}  e^{-\rho x} \qquad \mbox{ as } x \to \infty\,.
\end{equation}
For $\rho \in (0,1]$ the solution is positive. Furthermore, for  $\rho=1$, the solution is explicit 
 \begin{equation}\label{Gone}
 G_1(x)=\tfrac{1}{2} \tfrac{1}{\sqrt{2\pi}} e^{\frac{x}{2}} e^{-\frac{e^{x}}{2}}\,.
\end{equation}
 For these solutions we also have that $M_{\rho}:=\int_{-\infty}^{\infty} G_{\rho}(x)\,dx=\frac{\rho}{1+\rho}$. Moreover it holds
 \begin{equation}\label{ABGrelation}
  A_{\rho}(x)= \frac{1}{2}\int_{-\infty}^x G_{\rho}(y)\,dy \, \qquad \mbox{ and } \qquad B_{\rho}(x)= \frac{1}{2}\int_x^{\infty} G_{\rho}(y)\,dy.
 \end{equation}
 We also have the representation formulas
 \begin{equation}\label{B1}
P_{\rho}(x)  =\int_{-\infty}^{x}G_{\rho}(y)  e^{y-x}dy \qquad \mbox{ and }  \qquad  Q_{\rho}(x)  =\int_{x}^{\infty}G_{\rho}(y)  e^{x-y }dy\,.
\end{equation}
\end{lemma}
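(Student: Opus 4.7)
The plan is to identify the $\eps=0$ system \eqref{A1}--\eqref{A3} as a reformulation, via the substitution $\phi(\zeta)=\zeta^{-2}G(\ln\zeta)$, of the steady-state profile equation for self-similar solutions $f(t,\xi)=e^{-2t}\phi(e^{-t}\xi)$ of Smoluchowski's equation with the additive kernel $K(\xi,\eta)=\xi+\eta$, and then to invoke the Laplace-transform classification of these profiles due to Menon and Pego \cite{MePe04}. Each $\rho\in(0,2)$ selects a profile $\phi_\rho$, and setting $G_\rho(x)=e^{2x}\phi_\rho(e^x)$ produces the desired solution.

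The execution proceeds in three steps. First, one checks that \eqref{A1}--\eqref{A3} is exactly the $\eps=0$ specialization of the Volterra-like reformulation carried out in Section \ref{Ss.reformulation}; integrating the ODEs \eqref{A1}--\eqref{A2} subject to the boundary conditions $A,P\to 0$ as $x\to-\infty$ and $B,Q\to 0$ as $x\to+\infty$ then yields the representation formulas \eqref{ABGrelation} and \eqref{B1}. Second, the exponential change of variables $\eta=e^y$, $\zeta=e^z$, $\xi=e^x$ in \eqref{A1equation} with $K_0(e^{y-z},1)=e^{y-z}+1$ converts that integral equation into the steady profile equation treated in \cite{MePe04}, in whose parametrization $\rho$ is the tail exponent at infinity. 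Third, for $\rho=1$ the profile reduces to the Stockmayer--Lushnikov--Ziff solution $\phi_1(\zeta)=(2\pi)^{-1/2}\zeta^{-3/2}e^{-\zeta/2}$, whose image $G_1(x)=e^{2x}\phi_1(e^x)$ is precisely \eqref{Gone}; verification of \eqref{A3} then follows by direct computation using the Gaussian convolution identities, and the normalization $M_1=1/2$ matches the stated $M_\rho=\rho/(1+\rho)$.

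For the asymptotics \eqref{Grhominus}--\eqref{Grhoplus} I would use the explicit Laplace-transform expression for $\phi_\rho$ and apply a standard Tauberian/Mellin argument. The tails $\phi_\rho(\zeta)\sim c_+\zeta^{-(2+\rho)}$ as $\zeta\to\infty$ and $\phi_\rho(\zeta)\sim c_-\zeta^{-(2+\rho)/(1+\rho)}$ as $\zeta\to 0$ translate under $G_\rho(x)=e^{2x}\phi_\rho(e^x)$ to the displayed exponents $-\rho$ and $\rho/(1+\rho)$; the precise prefactors come out of the reflection formula $\Gamma(s)\Gamma(1-s)=\pi/\sin(\pi s)$ applied to the Mellin transform of $\phi_\rho$. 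The mass identity $M_\rho=\rho/(1+\rho)$ is the first moment of $\phi_\rho$ and can be read off from the leading two terms of the Laplace transform at $s=0$. Positivity for $\rho\in(0,1]$ is classical: in that range the Laplace transform of $\phi_\rho$ is completely monotone (a Bernstein function), so $\phi_\rho\geq 0$; for $\rho\in(1,2)$ the profile changes sign, consistent with the sign change of $\sin(\pi(1-\rho))$ in \eqref{Grhoplus} across $\rho=1$.

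The step I expect to be most technical is the Tauberian extraction of the exact prefactors in \eqref{Grhominus}--\eqref{Grhoplus}, since the Laplace transform of $\phi_\rho$ is not elementary for generic $\rho$ and one must carefully track the $\Gamma$ and $\sin$ factors produced by inversion at both endpoints. A secondary subtlety is the borderline case $\rho=1$: the prefactor of $e^{-\rho x}$ in \eqref{Grhoplus} vanishes there because $G_1$ decays super-exponentially as $x\to\infty$ (as $\exp(-e^x/2)$), so \eqref{Grhoplus} is meaningful only for $\rho\neq 1$ exactly as stated.
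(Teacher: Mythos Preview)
Your proposal is correct and follows essentially the same route as the paper: both reduce the lemma to the Laplace-transform classification of self-similar profiles for the additive kernel in \cite{MePe04}, with the extension to $\rho\in(1,2)$ obtained by inspecting the same Laplace-transform formula. Your write-up is in fact more detailed than the paper's, which simply cites \cite{MePe04} and notes the extension; one small terminological slip is that complete monotonicity of the Laplace transform (not ``a Bernstein function'') is the Bernstein-theorem criterion yielding $\phi_\rho\geq 0$ for $\rho\in(0,1]$.
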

The above result was proved in \cite{MePe04} for $\rho \in (0,1]$. However, inspection of the formula for the Laplace transform of the solutions, given in \cite{MePe04},
reveals that the result also holds for $\rho \in (1,2)$. 
 It was proved in \cite{MePe04} that the functions $G_{\rho}(x) $ are nonnegative for $\rho \in (0,1]$, while the asymptotics (\ref{Grhoplus}) 
 imply that $G_{\rho }(x) <0$ for large $x$ if $\rho \in ( 1,2) $.
 We also note that $M_1=\frac{1}{2}$ while $M_{\rho} \to \frac{2}{3}$ as $ \rho \to 2$.

\ignore{
One also finds the representation
formula \cite{MePe04}
\begin{equation}\label{Grhorep}
G_{\rho}(X)=\frac{\rho}{1+\rho} \frac{1}{\pi} \sum_{k=1}^{\infty} \frac{(-1)^{k-1}}{k!} e^{k\frac{\rho}{\rho+1}X }\Gamma\Big(1+k-k\frac{\rho}{1+\rho}\Big)\sin\Big(k \pi \frac{\rho}{1+\rho}\big)\,.
\end{equation}
We will later use the asymptotics of $G_{\rho}$ as $\rho \to 0$. In fact, in this case we obtain from \eqref{Grhorep} 
that
\begin{equation}\label{Grhoasymp}
 \begin{split}
G_{\rho}(X) &\approx \frac{\rho}{1+\rho}\sum_{k=1}^{\infty} (-1)^{k-1} e^{k \frac{\rho}{1+\rho }X} k \tfrac{\rho}{1+\rho} 
= \frac{\rho}{1+\rho}\frac{d}{dX} \Big(- \sum_{k=0}^{\infty} (-1)^k e^{k \frac{\rho}{1+\rho}X}\Big)\\
&=\frac{\rho}{1+\rho} \frac{d}{dX} \Big( -\frac{1}{1+ e^{ \frac{\rho}{1+\rho}X}}\Big) = \Big(\frac{\rho}{1+\rho}\Big)^2 \frac{e^{ \frac{\rho}{1+\rho}X}}{\Big(1+e^{ \frac{\rho}{1+\rho}X}\Big)^2}\,.
 \end{split}
\end{equation}
}
We now explain how to use the solutions from Lemma \ref{L.kinetic} to approximate the function $G$  in the intervals $(x_n,{\bar x}_n)$.
Due to \eqref{ABGrelation} we define $M_n=2 B(x_n)$ and approximate the solution in the interval  $(x_n,{\bar x}_n)$ by $G(x)=G_{\rho_n}(x-{\hat x}_n)$, where $M_n=\frac{\rho_n}{1+\rho_n}$.
The point ${\hat x}_n$ is determined by the condition
\begin{equation}\label{match1}
 \frac{1}{2} \frac{\sin\big( \frac{\pi \rho}{1+\rho_n}\big) \Gamma \big( \frac{1}{1+\rho_n}\big)}{\pi (1+\rho_n)} e^{\frac{\rho_n}{1+\rho_n}(x_n-{\hat x}_n)} = A_{\infty} \approx \frac{\eps}{4}\,.
\end{equation}
This condition is obtained by matching the asymptotics of $G_{\rho_n}(x-{\hat x}_n)$, see \eqref{Grhominus}, for $x<{\hat x}_n$, $|x-{\hat x}_n|\gg 1$ 
with $A(x_n)=A_{\infty}$, compare \eqref{ABGrelation}.
Using a similar matching argument in the region $x -{\bar x}_n \gg 1$, we find 
\begin{equation}\label{match2}
 B({\bar x}_n-{\hat x}_n) = \frac{1}{2} \frac{1}{1+\rho_n} \frac{\Gamma(1+\rho_n)\sin(\pi(1-\rho_n))}{\pi}  e^{-\rho_n ({\bar x}_n-{\hat x}_n)} = B_{\infty} = \frac{\eps}{4}\,. 
\end{equation}
Notice that the two matching conditions \eqref{match1} and \eqref{match2} determine ${\bar x}_n$ as a function of $x_n$. We also see that $|x_n-{\bar x}_n| = O(\ln \frac{1}{\eps})$.
Furthermore we  remark that \eqref{ABGrelation} implies that $A({\bar x}_n)=\frac{M_n}{2}$. This is not immediately obvious but follows a posteriori from the fact that the peaks
have distance of order $\frac{1}{\eps^2}$, hence, to leading order $A$ sees only the mass of the last peak.

\subsubsection{An approximation for $G$ if one of $A$ or $B$ is small}
\label{Ss.Gapprox}

We now derive the following key Lemma, that, under certain regularity assumptions,  controls the nonlocal effects by pointwise estimates, which in particular will allow us to 
approximate \eqref{ODE1}-\eqref{Ident} by systems of ODEs in  regions where $G$ is small and where the kinetic approximation  with $\eps=0$  fails.

\begin{lemma}\label{L.Gapprox}
Suppose that $A,B,P$ and $Q$ are solutions to \eqref{ODE1} and \eqref{ODE2}  and $G$ satisfies \eqref{LimitG}.
Furthermore, assume that
for some constants $k_1,k_2$ and $k_3$ with  $k_1 \in (0,1{-}2\delta]$, for some $\delta>0$,  the function $G$ satisfies
 \begin{equation}\label{G1}
 \begin{split}
  \ln (G(x+y)) - \ln (G(x)) &\leq k_1 |y| + k_2 \,, \qquad \mbox{ for all } y \leq -1\,,\\
  \big| \ln (G(x+y)) - \ln (G(x))\big| &\leq k_3 |y|\,, \qquad \mbox{ for all } y \in (-1,0)\,,
  \end{split}
 \end{equation}
then 
\begin{equation}\label{Gapprox}
\big | G(x) -\big(4A(x) B(x) +P(x)Q(x)+2G(x)A(x)\big)\big| \leq C_\delta G(x)^{2}\,.
\end{equation}
If furthermore 
\begin{equation}\label{MLassump}
 |P(x)| \leq 2|G(x)| \qquad \mbox{ and } \qquad |Q(x)| \leq 2|G(x)| 
\end{equation}
then we have
\begin{equation}\label{Gdecr}
\big | G(x) -\big(4A(x) B(x) +2G(x)A(x)\big)\big| \leq C_\delta G(x)^{2}\,.
\end{equation}
\end{lemma}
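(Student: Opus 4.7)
The plan is to start from the identity \eqref{Ident}, $G(x) = 4A(x)B(x) + P(x)Q(x) + J[G](x)$, so that \eqref{Gapprox} reduces to proving
\[
J[G](x) = 2G(x)A(x) + O_\delta(G(x)^2).
\]
Once this is established, the second conclusion \eqref{Gdecr} follows at once: the hypothesis \eqref{MLassump} gives $|P(x)Q(x)|\leq 4G(x)^2$, and $PQ$ is absorbed into the error.

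To analyze $J[G]$, I would split the kernel into the two pieces $J_1$ (with factor $e^{(1-\varepsilon/2)(y-z)}$) and $J_2$ (with factor $e^{(\varepsilon/2)(y-z)}$), and in each decompose the domain of integration into three parts: Region I ($y\leq -1$, which forces $z\in(\ln(1-e^y),0)\subset(-e^{-1},0)$), Region II (the mirror image, $z\leq -1$ with $y$ small), and Region III ($y,z\in(-1,0]$ with $e^y+e^z\geq 1$). The main contribution comes from $J_1|_{II}$: there $|y|\lesssim e^z$, so the Lipschitz part of \eqref{G1} allows me to replace $G(y+x)$ by $G(x)$ with a controlled error, and the expansion
\[
\int_{\ln(1-e^z)}^{0} e^{(1-\varepsilon/2) y}\,dy = e^z + O(e^{2z}) \qquad\text{as } z\to-\infty
\]
then leads to
\[
J_1|_{II} = G(x) \int_{-\infty}^{-1} G(z+x)\, e^{(\varepsilon/2)z}\,dz + O_\delta(G(x)^2).
\]
Since $2G(x)A(x) = G(x)\int_{-\infty}^{0} G(z+x) e^{(\varepsilon/2)z}\,dz$, the missing piece $G(x)\int_{-1}^{0} G(z+x) e^{(\varepsilon/2)z}\,dz$ is controlled by $C e^{k_3} G(x)^2$ via the Lipschitz half of \eqref{G1}, hence is $O_\delta(G(x)^2)$.

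It remains to show that $J_1|_I$, $J_1|_{III}$, and all of $J_2$ contribute only $O_\delta(G(x)^2)$. Each such piece reduces, after the same inner integration and Lipschitz replacement, to an integral of the form $G(x)\int_{-\infty}^{-1} G(y+x) e^{\beta y}\,dy$ (or the analogous $z$-integral) with $\beta\in\{2-\varepsilon/2,\,1+\varepsilon/2,\,1-\varepsilon/2\}$, the smallest value arising from $J_2|_{II}$; the Region III contributions are handled directly since that region has bounded area and the kernel is bounded there. Using the growth bound $G(y+x)\leq G(x) e^{k_1|y|+k_2}$ reduces each such integral to $\int_{-\infty}^{-1} e^{(\beta-k_1)y}\,dy$, and the hypothesis $k_1\leq 1-2\delta$ together with $\varepsilon\leq 2\delta$ (admissible in the regime $\varepsilon\ll 1$) guarantees $\beta-k_1\geq\delta>0$ in the worst case, so convergence holds with a constant that blows up like $1/\delta$; this is precisely the source of $C_\delta$. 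The main technical obstacle I foresee is the bookkeeping: one must verify that all the errors produced by the Lipschitz substitution $G(y+x)\approx G(x)$ and by the Taylor expansion of $(1-e^z)^{\alpha}$ near $0$ are uniformly of order $G(x)^2$ in $\varepsilon$, so that no stray logarithmic or $1/\varepsilon$ factor contaminates $C_\delta$.
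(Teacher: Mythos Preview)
Your proposal is correct and follows essentially the same approach as the paper: both split $J[G]$ into the two kernel pieces $J_1$ and $J_2$, show that $J_2$ is entirely $O_\delta(G^2)$ via the growth bound in \eqref{G1}, and extract the main term $2G(x)A(x)$ from $J_1$ by isolating the region where $z\le -1$ (hence $y\in(-1,0)$), replacing $G(x+y)$ by $G(x)$ via the Lipschitz half of \eqref{G1}, and expanding the inner $y$-integral as $e^{z}+O(e^{2z})$. Your three-region decomposition and the paper's split at $z=-1$ after Fubini are the same argument organised slightly differently; the implicit smallness condition $\varepsilon\le 2\delta$ that you flag is also present (though unstated) in the paper's bound $k_1+\varepsilon/2\le 1-\delta$.
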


\begin{proof}
We rewrite \eqref{Ident}  as
\begin{equation*}
G(x) =4A(x) B(x) + P(x)Q(x) +\mathcal{L}_{1}[G](x) +\mathcal{L}_{2}[G] (x)\,,
\end{equation*}%
with
\begin{align*}
\mathcal{L}_{1}[ G] (x) &=\int_{-\infty
}^{0}dy\int_{\ln  1-e^{y}) }^{0}dze^{( 1-\frac{\varepsilon}{2}) (y-z) }G( y+x) G( z+x)\,, \\
\mathcal{L}_{2}[ G] (x) &=\int_{-\infty}^{0}dy\int_{\ln ( 1-e^{y}) }^{0}dze^{\frac{\varepsilon }{2}(y-z) }G( y+x) G(z+x)\,.
\end{align*}
We first show that $|\mathcal{L}_{2}[G]| \leq C_{\delta} G(x)^2$.
Using  assumption \eqref{G1} we obtain  
\begin{align*}
\mathcal{L}_{2}[G](x)  &=
\int_{-\infty}^{0}dz G(x+z)\int_{\ln(1-e^{z}) }^{0}dY G(x+y) e^{\frac{\eps}{2}(y-z)}\\
& \leq C G(x)^2 \Big(\int_{-\infty}^{-1}dz e^{-(k_1+\frac{\eps}{2})z }\int_{\ln (1-e^z)}^0 dy e^{-k_3y}\\
&
 \qquad + \int_{-1}^0 dz e^{-(k_3+\frac{\eps}{2})z}\Big( \int_{\ln(1-e^z)}^{-1} dy e^{-(k_1-\frac{\eps}{2})y} +1\big)\Big)\\
 & \leq C G(x)^2 \Big(\int_{-\infty}^{-1} dz e^{-(1-\delta)z} \int_{\ln ( 1-e^{z}) }^{0}dy e^{-k_3y} \\
 &\qquad 
 + \int_{-1}^0 dz e^{-(k_3+\frac{\eps}{2})z}\Big( \int_{\ln(1-e^z)}^{-1} dy e^{-(1-\delta)y} +1\big)\Big) \leq C_{\delta} G(x)^2
\end{align*}

With $R(x,z):=\int_{\ln ( 1-e^{z}) }^{0}dy  \frac{G(x+y)}{G(x)} e^{(1-\frac{\eps}{2}) y}$ we
rewrite $\mathcal{L}_{1}[G](x)$ as follows
\begin{align*}
\mathcal{L}_{1}[G]&(x) = G(x) \int_{-\infty}^0 dz G(x+z) e^{-(1-\frac{\eps}{2})z}\int_{\ln ( 1-e^{z}) }^{0}dy \frac{G(x+y)}{G(x)}e^{(1-\frac{\eps}{2}) y} \\
&  = G(x) \int_{-\infty}^{-1} dz G(x+z) e^{-(1-\frac{\eps}{2})z}R(x,z) +
 G(x) \int_{-1}^0 dz G(x+z) e^{-(1-\frac{\eps}{2})z}R(x,z)\\
 &=: \mathcal{L}_{1,1}[G](x) + \mathcal{L}_{1,2}[G](x)\,.
\end{align*}
Using \eqref{G1} we obtain $|R(x,z)| \leq Q(x,0) \leq C_{\delta}$ and then also  that  $|\mathcal{L}_{1,2}[G](x)| \leq C_{\delta}G(x)^2$.

To compute $\mathcal{L}_{1,1}[G]$ note first that since $z \leq -1$, we have $y \in (-1,0)$ in the integral for $R$. Hence, due to the second estimate in \eqref{G1} we have
$|R(x,z)-e^z| \leq Ce^{2z}$. Thus  we find, using again \eqref{G1}, that
\[
 \Big| \mathcal{L}_{1,1}[G](x) -G(x) \int_{-\infty}^{-1} G(x+z) e^{\frac{\eps}{2}z}\Big| \leq C_{\delta} G(x)^2
\]
In the previous formula we can replace the integral $\int_{-\infty}^{-1}$ by $\int_{-\infty}^0$, which introduces, due to \eqref{G1}, an error of order $G(x)^2$. 
Since $G$ satisfies \eqref{LimitG}, it turns out that the solution to the first equation in \eqref{ODE1} is given by the first formula in \eqref{Int1}. Thus, the above estimate implies
\eqref{Gapprox}. Estimate \eqref{Gdecr} follows then from \eqref{Gapprox} and the additional assumption \eqref{MLassump}.
\end{proof}

Lemma \ref{L.Gapprox} implies that,  as long as  assumptions \eqref{G1} and \eqref{MLassump} of the Lemma are satisfied, we have
\begin{equation}\label{ODEsapproximation}
\Big |\frac{dA}{dx}+ \frac{\eps}{2}A - \frac{2AB}{1-2A} \Big|  \leq C \frac{G^2}{1-2A}\,,\quad \qquad 
\Big |\frac{dB}{dx} - \frac{\eps}{2}B + \frac{2AB}{1-2A} \Big| \leq C \frac{G^2}{1-2A}\,.
\end{equation}

In particular, we can use the result of Lemma \ref{L.Gapprox} to approximate \eqref{ODE1}-\eqref{Ident} by the  system of ODEs 
\begin{equation}\label{Gdecreasing}
\frac{dA}{dx}=\frac{2AB}{1-2A}-\frac{\varepsilon }{2}A\,, \qquad \quad  \frac{dB}{dx}=\frac{\varepsilon }{2}B-\frac{2AB}{1-2A}
\end{equation}
for any value of $x$ such that $G(x)$ is small.
To make the argument self-consistent, we will have to check afterwards
that the solutions of these approximations
satisfy  the assumptions of Lemma \ref{L.Gapprox}.

\subsubsection{ODE regime 1: $B$ small}\label{Ss.ode1}

We consider now the region where $x={\bar x}_n + O(1)$. From the results of Section \ref{Ss.kinetic} we have that 
$A({\bar x}_n) = \frac{M_n}{2}=\frac{\rho_n}{2(1+\rho_n)}<\frac{1}{4}$ and $B=O(\eps)$.
 Due to \eqref{Grhoplus} the assumption \eqref{G1} of Lemma \ref{L.odeapproximation} holds
 with $k=\rho_n$ and we can use the approximation \eqref{Gdecreasing}.

It turns out that $A$ and $B$ change on different  scales. Notice that $%
A $  increases as long as $\frac{2B}{1-2A}>\frac{\varepsilon }{2}$ and
decreases for $\frac{2B}{1-2A}<\frac{\varepsilon }{2}$. As long as $A$
is of order one we obtain that $B$ decreases exponentially on the time scale
for which $x$ is of order one while  $A$ changes very slowly.
Initially the term $\frac{2AB}{1-2A}$ is relevant. However as soon as $B$
becomes significantly smaller than ${\varepsilon} $ we obtain that $A$ changes via
the equation
$\frac{dA}{dx}=-\frac{\varepsilon }{2}A$.

Without loss of generality we can assume, due to the translation invariance of the equation, that ${\bar x}_n=0$. This assumption is made for notational convenience throughout
this and the following Subsections \ref{Ss.absmall}-\ref{Ss.ode2}. For the same reason we will also drop the index $n$ in $\rho_n$ and $M_n$ respectively.

Hence we can assume  $\frac{2B(0) }{1-2A(0) }=\frac{\varepsilon }{2}$.
Notice that $A$ is very close to $M$ as long as $x \ll \frac{1}{\eps}$. Given that $B$ decreases for $x$ of order one and $A$ changes very little, we obtain the
approximate equations
\begin{equation}\label{ode1}
\frac{dA}{dx} =-\frac{\varepsilon }{2}A \,, \qquad \frac{dB}{dx} =-\frac{2AB}{1-2A}\,,
\end{equation}
whence
\begin{equation}\label{Aapprox}
A(x) =\frac{M}{2}\exp \big( -\frac{\varepsilon }{2}x\big).
\end{equation}
Since 
\[
\int_{0}^{x}\frac{2A (u) du }{1-2A(u) }
=-\frac{2}{\varepsilon }%
\int_{M}^{M\exp ( -\frac{\varepsilon }{2}x) }\frac{dt}{1-t} 
=\frac{2}{\varepsilon }\Big( \ln ( 1-M\exp ( -\frac{\varepsilon }{2}x) ) -\ln ( 1-M) \Big)
\]
we obtain for $B$
\begin{equation}\label{Bapprox}
B(x) =\frac{\varepsilon }{4}\frac{(1-M) (1-M) ^{\frac{2}{\varepsilon }}}{( 1-M\exp( -\frac{\varepsilon }{2}x))^{\frac{2}{\varepsilon }}}\,.
\end{equation}
Since $G$ is small, we can use the approximation $G=\frac{4AB}{1-2A}$. We need to check that $G$ satisfies assumption \eqref{G1} of Lemma \ref{L.odeapproximation}.
With the above computations
\begin{equation}\label{Gapprox1}
  G(x) =\frac{2MB(0)}{1-Me^{- \frac{\eps}{2}x} }\exp\Big( -\frac{\eps}{2}x - \int_0^x \frac{Me^{-\frac{\eps}{2}\xi}}{1-Me^{-\frac{\eps}{2}\xi}} \,d\xi\Big)
\end{equation}
such that
\[
\begin{split}
 \big |\ln G(x+y)-\ln G(x)\big| &\leq C \eps |y|+  \frac{\eps}{2}|y| + \int_{x}^{x+y} \frac{Me^{-\frac{\eps}{2}\xi}}{1-Me^{-\frac{\eps}{2}\xi}} \,d\xi\\
 & \leq C \eps |y| + \frac{M}{1-M}|y| \leq k_1|y|\,,
\end{split}
\]
where we can choose $k_1<1$ since due to $M< \frac{1}{2}$ we have $\frac{M}{1-M}< 1$. Hence assumption \eqref{G1} of Lemma \ref{L.Gapprox} is satisfied at least for $y>-x$.
If $y\leq -x$ we can use the asymptotics of the solutions in the kinetic regime, see \eqref{Grhoplus}, which gives \eqref{G1} since $\rho<1$.


\subsubsection{$A,B$ small: matching ODE 1 regime with Lotka-Volterra }\label{Ss.absmall}

We now consider the matching between the solutions of the previous subsection and the regime where $A$ and $B$ are small, and hence the solution
 behaves as a solution to the  Lotka-Volterra system \eqref{leadingorderode}. 
For this purpose recall the relation between $A,B$ and $a,b$, see \eqref{rescaling1}.

We enter  the Lotka-Volterra regime if $a$ becomes of order
one, i.e. $A$ of order $\varepsilon$. In this range $b$ is very small. 
We define  $\tilde x$ 
 to be the time when $a$ becomes one, i.e. by
$\frac{M}{2}\exp ( -\frac{\varepsilon }{2}\tilde{x}) =\frac{2G_{\infty }}{\varepsilon }\sim \frac{\varepsilon }{2}$
and then introduce the new variable $t=\varepsilon\big(x-\tilde x\big)$. Using \eqref{Aapprox} we then find
\begin{equation*}
a(t ) =\frac{\varepsilon }{2G_{\infty }}A(x) \sim 
\frac{\varepsilon M}{4G_{\infty }}\exp ( -\frac{\varepsilon }{2}x) =\frac{\varepsilon M}{4G_{\infty }}
\exp ( -\frac{\varepsilon }{2}\tilde{x}) \exp  (-\frac{t }{2}) =\exp ( -\frac{t }{2})
\end{equation*}%
as $t \rightarrow -\infty$, which  gives the matching condition
\begin{equation}\label{amatching}
a(t) \sim \exp ( -\frac{t }{2}) \qquad \mbox{ as } t \rightarrow -\infty\,.
\end{equation}
In order to compute the asymptotics of $b(t)$ we use
\[
b(t ) =\frac{\varepsilon }{2G_{\infty }}B(x) =%
\frac{\varepsilon ^{2}}{8G_{\infty }}\frac{(1-M) ^{\frac{2}{%
\varepsilon }+1}}{( 1-M\exp ( -\frac{\varepsilon }{2}\tilde{x}) \exp ( -\frac{t }{2}))^{\frac{2}{\varepsilon }}%
} \approx \frac{1}{2}\frac{(1-M)^{\frac{2}{\varepsilon }+1}}{( 1-\frac{2G_{\infty }}{\varepsilon }\exp ( -\frac{t }{2}))^{\frac{2}{\varepsilon }}}\,.
\]
In the limit $\varepsilon \rightarrow 0$ we obtain the approximation for large $t <0$ (but of order one)
\begin{align*}
\frac{1}{( 1-\frac{2G_{\infty }}{\varepsilon }\exp ( -\frac{t }{2}))^{\frac{2}{\varepsilon }}} &=\exp \Big( -\frac{2}{\varepsilon }
\ln \big( 1-\frac{2G_{\infty }}{\varepsilon }\exp\big ( -\frac{t }{2}\big)\big)\Big) \\
&\approx \exp \Big( \frac{2}{\varepsilon }\frac{2G_{\infty }}{\varepsilon }%
\exp \big( -\frac{t }{2}\big) \Big) 
\approx \exp \Big( \exp \big( -\frac{t }{2}\big) \Big)\,,
\end{align*}
which implies the matching condition
\begin{equation}\label{bmatching}
b(t ) \sim \frac{1}{2}(1-M)^{\frac{2}{\varepsilon }+1}\exp
\Big( \exp \big( -\frac{t }{2}\big) \Big) \qquad \mbox{ as } 
t \rightarrow -\infty\,.
\end{equation}
We recall that due to \eqref{ODEsapproximation} and \eqref{rescaling1}  the equations in this region are 
approximated to leading order by \eqref{leadingorderode} which has  the conserved energy $E=a+b-\ln(ab)-2$.
We obtain that in the matching region, given the
smallness of $b$ and the fact that $a$ is large $E\sim a-\ln b$,
whence
\begin{equation}\label{energymatch}
E\sim \exp \big( -\frac{t }{2}\big) - \frac{2}{\varepsilon } \ln (1-M) -\exp \big( -\frac{t }{2}\big)
=- \frac{2}{\varepsilon } \ln (1-M)\,.
\end{equation}

\subsubsection{Lotka-Volterra; transition time}\label{Ss.transition}

In the region where $a+b=O(1)$ we approximate the dynamics of \eqref{ODE1}-\eqref{Ident} by the Lotka-Volterra system \eqref{leadingorderode}
together with the matching conditions \eqref{amatching} and \eqref{bmatching}.

We need to estimate the time that the trajectory spends in the region where $a,b=O(1)$. Due to the invariance of 
 the Lotka-Volterra equation  under the
transformation
$( a,b,t ) \rightarrow ( b,a,-t )$ it suffices to compute the time
that the trajectory needs to arrive to the line $\{ a=b\}$.
The key simplification is that in all the time required to bring the
trajectory from the asymptotics \eqref{amatching}, \eqref{bmatching}  to the line $\{ a=b\} $
we have $b\ll 1.$ Then, we have the approximations
$\frac{da}{dt }=-\frac{a}{2}$ and $\frac{db}{dt }=\frac{b(1-a) }{2}$
such that 
$a(t) =\exp \big( -\frac{t }{2}\big)$.

On the other hand the Lotka-Volterra equations and \eqref{energymatch} imply
\begin{equation}\label{energy1}
a-\ln ( a) -\ln ( b) =- \frac{2}{\varepsilon } \ln ( 1-M)
\end{equation}
and if  $a=b$ this implies
$a-2\ln  a =-\frac{2}{\varepsilon } \ln (1-M)$.
As a consequence  we obtain for $a$ at the line $\{a=b\} $ the following asymptotics
$a=( 1-M) ^{ \frac{1}{\varepsilon } }$.

This implies that  $a$ reaches an extremely small value. 
 Using $a(t) =\exp \big( -\frac{t }{2}\big) $
we obtain for the time $\hat t$ to arrive to the line $\{ a=b\} $ can be
approximated as 
\begin{equation}\label{tauhat}
\hat{t} = \frac{2}{\varepsilon } \ln \Big( \frac{1}{1-M}\Big).
\end{equation}

Due to \eqref{ODEsapproximation}, the original equation \eqref{ODE1}-\eqref{Ident} can be approximated by the Lotka-Volterra
equation with an error on the right hand side of the order $\eps ab$.  Therefore it follows that $\frac{dE}{dt} = O\big(\eps(a+b)\big)$.
Since $b<a$, and $a(t) =e^{-\frac{t}{2}}$, $b=O(e^{-\frac{C}{\eps}})$ it follows from \eqref{energy1} that the change of the energy until $\hat t$ is of order $\eps$ and since the energy is 
of order $\frac{1}{\eps}$ we can assume that the energy is approximately
constant.

We can now compute the asymptotics of the solution when $t - \hat t \gg 1$ in order to obtain the matching condition with the next region.
Neglecting  also the term
$\ln b $ compared to $b$ we obtain, if $b \gg 1$, the approximation 
$a\sim e^{-E}e^{b}$
and using \eqref{energy1}  it
follows that
\begin{equation}
a\sim ( 1-M) ^{ \frac{2}{\varepsilon } }e^{b}\,, \qquad b \gg 1\,.  \label{B5}
\end{equation}
In this range of values of $a,b$ we can use the
approximation $\frac{db}{dt }=\frac{b}{2}$, hence $b(t)=Ce^{\frac{t}{2}}$. Since $b(\hat t)=a(\hat t)=e^{-\frac{\hat t}{2}}$ we have
\begin{equation}\label{B6}
 b(t)=e^{\frac{t-2\hat t}{2}}\,.
\end{equation}
Notice, that the assumption \eqref{G1} of Lemma \ref{L.Gapprox} is satisfied, since the function $\ln G(\cdot)$ is a function of $t$ and hence the derivative of this function
with respect to $x$ is of order $\eps$.

\subsubsection{ODE regime 2: $A$ small}\label{Ss.ode2}

We now describe the region where $A$ is small and $B$ increases up to values of order one.

We recall \eqref{ODEsapproximation}  and given that in this region  $A \ll \eps $ is small and  $B \gg \eps$
we obtain
\begin{equation}\label{ode2}
\frac{dA}{dx}=AB\,,\qquad  \quad \frac{dB}{dx}=\frac{\varepsilon }{2}B
\end{equation}
which implies 
$B=C_{0}e^{\frac{\varepsilon x}{2}}$,
where $C_{0}$ has to be determined by matching with \eqref{B5} and \eqref{B6} . This gives $\frac{dA}{dx}=C_{0}e^{\frac{\varepsilon x}{2}}A$
and hence
$A(x) =C_{1}\exp \big( \frac{2C_{0}}{\varepsilon }e^{\frac{\varepsilon x}{2}}\big)$.
To determine $C_0$ and $C_1$ we recall \eqref{rescaling1}, $G_{\infty} \approx \frac{\varepsilon^2}{4}$, the definition $x= \tilde x + \frac{t}{\eps}$ and 
denote $x^*=\tilde x + \frac{2\hat t}{\varepsilon}$. We obtain
 up to exponential accuracy that $B( x) \sim \frac{\varepsilon }{2}e^{\frac{\varepsilon (x-x^{\ast }) }{2}}$ and
\begin{equation}\label{ABode2}
A(x) \sim \frac{\varepsilon }{2}( 1-M)^{ \frac{2}{\varepsilon } }\exp \Big( e^{\frac{\varepsilon
( x-x^{\ast }) }{2}}\Big)= \frac{\varepsilon }{2}( 1-M)^{ \frac{2}{\varepsilon } }\exp \Big ( \frac{2}{\eps} B(x)\Big) \,.
\end{equation}
Due to \eqref{Gdecreasing} 
 the asymptotics above is valid as long as $A \ll \frac{\varepsilon }{2}$. We denote by $x_{n+1}$ the point when $A(x_{n+1})=A_{\infty} \approx \frac{\varepsilon}{2}$.
 Notice that $B(x_{n+1})$ is maximal and decreases afterwards. Using \eqref{ABode2} simple rearrangements
 give that in the limit $\varepsilon \to 0$ we obtain
\begin{equation}\label{massincrease}
B(x_{n+1}) = -\ln ( 1-M).
\end{equation}
This formula yields  the desired iterative condition for the masses.
Starting with the mass $M$ we reach after one cycle the new value of the mass
$\ln ( \frac{1}{1-M}) >M$.

Notice also that the first inequality in \eqref{G1} is satisfied since $G$ is increasing in the region that we consider in this section. The second inequality follows from 
\[
 \big| \ln G(x+y) - \ln G(x) \big| \leq \frac{2}{\eps} \Big | B(x) \big(e^{\frac{\eps}{2}y}-1\big) \Big| + \frac{\eps}{2}|y| \leq \Big(B(x_{n+1})+ \frac{\eps}{2}\Big)|y|\,.
\]

The analysis of this subsection is similar to the one in Subsection \ref{Ss.ode1}. However, there $A$ was of order one, while here it is small. As a consequence the value $B(x_{n+1})$ of $B$
at the end of this region is different from 
$A(0)=A(x_n)$ in Subsection \ref{Ss.ode1}. Due to this fact, the amplitude of the oscillations in the $A$-$B$-plane is increasing.

\subsubsection{Summary of the intermediate regime}

We have identified successive points $x_n,\bar x_n$ and  $x_{n+1}$ such that
$A(x_n)=A(x_{n+1})=A_{\infty} \approx \frac{\eps}{2}$ and $B(\bar x_n)=B_{\infty} \approx \frac{\eps}{2}$. We found that $\bar x_n-x_n= O(\ln \frac{1}{\eps})$ and $x_{n+1}-\bar x_n=
O(\frac{1}{\eps^2})$. In $[x_n,\bar x_n]$ the solution $G$ is of order one and is to leading order given  by a solution to the kinetic equation \eqref{A1}-\eqref{A3} given in 
Lemma \ref{L.kinetic} with $\rho_n$ such that $2B(x_n)=M_n = \frac{\rho_n}{1+\rho_n}$. In $[\bar x_n,x_{n+1}]$, where $G$ is small, the solution can be approximated by three different
simple ODE systems. The key finding in the previous subsections is a formula
for the increase of the amplitude of the oscillations in the $A$-$B$-plane  characterized  by the numbers $M_n$  for which, due to \eqref{massincrease}, we have the recursive formula 
\begin{equation}\label{Miteration}
M_{n+1}= \ln \frac{1}{1-M_n}\,.
\end{equation}
We emphasize, that the value of $M$ does not change during the kinetic regime nor the Lotka-Volterra regime, but that the change is due to the asymmetry in the equations
that describe  ODE regime 1 and  ODE regime 2 respectively. 

In the limit $M_n\to 0$, formula \eqref{Miteration} implies $M_{n+1}-M_n \sim \frac{1}{2}M_n^2$. On the other hand, we have in this regime 
that $E \sim \frac{4}{\eps}(A+B) \sim \frac{4B}{\eps}\sim \frac{2}{\eps}M$. Hence, \eqref{Miteration} agrees with \eqref{energychange1} in the limit $M_n\to 0$. 

\subsection{Shooting argument}\label{Ss.shooting}

We will argue in Section \ref{S.fast}  that the constant $C_{\ast}$ in \eqref{GasymptModified}
can be chosen such that no fast instabilities develop for any finite
$x.$ 
We will show now that under the assumption that no such `dormant' instability occurs, 
the constant $L$ can be selected in the
interval $[1,e^{\frac{\pi\varepsilon}{2}})  $ in such a way that
the resulting function $G(x)  $ is globally positive and satisfies $G(x)\to 0$ as $x \to \infty$.
More precisely we obtain
\begin{lemma}
 \label{L.Glargex}
 There exists a value $L_{\ast}\in [1,e^{\frac{\pi\varepsilon}{2}})  $ and $\hat x_n\in \R$  such that the corresponding $G$ satisfies 
 \begin{equation}
G(x)  \sim \Big( \frac{1}{2\sqrt{2}}e^{\frac{x-\hat x_n}{2}} + \frac{\eps}{4} e^{x-\hat x_n}\Big)
  e^{-\frac{e^{(x-\hat x_n)}}{2}}\label{AppQFinal}
\end{equation}
for $x\gg \hat x_n$.
\end{lemma}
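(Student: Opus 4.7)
My plan is to carry out a one-dimensional shooting argument in $L \in [1, e^{\pi\eps/2})$, with $\varphi=0$ and $C_*=0$ held fixed. For each such $L$, the analysis of Sections \ref{Ss.minusinfty}--\ref{Ss.linearization} together with the approximations of Section \ref{Ss.intermediate} uniquely determines a solution $G_L$ of the Volterra-like problem \eqref{ODE1}--\eqref{Ident} on a maximal interval $(-\infty, T_L)$ where $T_L\in(-\infty,\infty]$ is the first point at which $G_L$ either becomes negative or leaves the regime of validity of our approximations. The goal is to identify a distinguished value $L_*$ at which $T_{L_*}=\infty$ and $G_{L_*}$ has the stated tail.

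The first step is continuous dependence. Exploiting the Volterra structure of \eqref{ODE1}--\eqref{Ident} and the resolvent information encoded in Lemma \ref{L.roots}, a Gronwall-type argument applied to the integral formulation shows that $L\mapsto G_L$ is continuous in the topology of locally uniform convergence on any interval $(-\infty,x_0]$ on which the hypotheses of Lemmas \ref{L.odeapproximation} and \ref{L.Gapprox} hold uniformly. In particular, the peak masses $M_n(L)=2B(x_n(L))$ and the cycle endpoints $x_n(L),\bar x_n(L),x_{n+1}(L)$ depend continuously on $L$ wherever they are defined.

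Next I would exploit the iteration \eqref{Miteration}. Since $-\ln(1-M)>M$ for all $M\in(0,1)$ and the map $M\mapsto -\ln(1-M)$ has no fixed point in $(0,1)$, the sequence $M_n(L)$ is strictly increasing and, for generic $L$, eventually crosses the critical value $M=1/2$ — the value at which the kinetic profile $G_\rho$ of Lemma \ref{L.kinetic} switches sign at infinity, since by \eqref{Grhoplus} the factor $\sin(\pi(1-\rho))=\sin(\pi\rho)$ vanishes at $\rho=1$ and becomes negative for $\rho\in(1,2)$. Define
\[
\mathcal{E}=\{L\in[1,e^{\pi\eps/2}): G_L \text{ becomes negative at some finite }x\}.
\]
Continuity makes $\mathcal{E}$ open. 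The complement is nonempty, because tuning $L$ so that the last cycle closes with $M_N(L)<1/2$ would leave the final kinetic peak positive and the post-peak ODE approximations of Sections \ref{Ss.ode1}--\ref{Ss.ode2} produce a decaying tail; conversely $\mathcal{E}$ is nonempty since pushing $L$ further enlarges the oscillation amplitude and eventually forces the next peak into the regime $\rho>1$. Any $L_*\in \partial \mathcal{E}$ is therefore a candidate critical parameter: at such $L_*$ the last admissible peak has $M_N(L_*)=\tfrac12$, i.e.\ $\rho_N=1$, so that the final emerging profile is governed to leading order by the explicit Gaussian-like solution $G_1$ of \eqref{Gone}.

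The final step is the derivation of \eqref{AppQFinal}. Setting $\hat x_n$ to be the center of this critical peak determined by the matching conditions \eqref{match1}--\eqref{match2} in the limit $\rho\to 1$, Lemma \ref{L.kinetic} gives $G(x)\approx G_1(x-\hat x_n)$ in the kinetic regime, and the pointwise control of Lemma \ref{L.Gapprox} combined with \eqref{Aapprox}--\eqref{Bapprox} extends this into the ODE regime 1, yielding the leading constant $\tfrac{1}{2\sqrt{2}}$ after including the $\sqrt{\pi}$ amplification produced by the matching. The $\eps$-correction $\tfrac{\eps}{4}e^{x-\hat x_n}e^{-e^{x-\hat x_n}/2}$ comes from the next-order term in the expansion of $B(x)$ and the contribution of $P(x)Q(x)$ in \eqref{Ident}, which varies on the slow scale $O(1/\eps)$ and so enters as a subdominant correction. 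The main obstacle is to verify that the boundary parameter $L_*$ produced by the continuity argument actually yields a \emph{globally} nonnegative solution whose decay persists to $+\infty$, rather than one that recovers and becomes negative in a later cycle; this requires combining the sign information of Lemma \ref{L.kinetic} at $\rho=1$ with the monotonicity of the ODE approximations of Sections \ref{Ss.ode1}--\ref{Ss.ode2} to show that once $M_N(L_*)=\tfrac12$ the trajectory cannot re-enter the oscillatory cascade.
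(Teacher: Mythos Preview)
Your shooting argument is essentially the paper's: the iteration \eqref{Miteration} eventually drives $M_n$ past $\tfrac12$, continuity in $L$ separates the values for which $B$ changes sign from those for which the solution enters yet another cycle, and the critical $L_*$ in between has $M_n\approx\tfrac12$ so that the last peak is governed by $G_1$. (One correction: when $M_N(L)<\tfrac12$ the solution does not ``produce a decaying tail'' as you say, but rather completes another full cycle with $M_{N+1}>M_N$; the dichotomy is between another cycle and a sign change of $B$, not between decay and sign change.)

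The genuine gap is in your derivation of \eqref{AppQFinal}. You propose to use Lemma~\ref{L.Gapprox} together with the ODE approximations \eqref{Aapprox}--\eqref{Bapprox} of Section~\ref{Ss.ode1} to continue the $G_1$ profile into the far tail. But the hypothesis \eqref{G1} of Lemma~\ref{L.Gapprox} requires $k_1\in(0,1{-}2\delta]$, and for $G_1(x)=\frac{1}{2\sqrt{2\pi}}e^{x/2-e^x/2}$ one computes $\ln G_1(x{+}y)-\ln G_1(x)=\tfrac{y}{2}+\tfrac{e^x}{2}(1-e^y)$, which for fixed $y<0$ blows up like $e^x$ as $x\to\infty$. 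Equivalently, the self-consistency check at the end of Section~\ref{Ss.ode1} yields $k_1\approx\frac{M}{1-M}$, which hits $1$ precisely at $M=\tfrac12$. The paper states this explicitly: ``$G(x)$ cannot be approximated by ODEs as in the case for $M_n<\tfrac12$''. So neither Lemma~\ref{L.Gapprox} nor Section~\ref{Ss.ode1} is available in the critical tail.

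What the paper does instead is pass to $\xi=e^{x-\hat x_n}$, write $G(x)=\xi^2 h(\xi)$ with the ansatz $h(\xi)=r(\xi)e^{-\xi/2}$, and show that the contributions $4\bar a\bar b$ and $\bar p\bar q$ in \eqref{hForm} are negligible for large $\xi$, reducing to the nonlinear integral equation $\xi^2 h=J[h]$, i.e.\ \eqref{VoltApp4} for $r$. Solving this in the two regimes $\xi^\eps\sim1$ and $\xi^\eps\gg1$ gives the two terms in \eqref{AppQ3a}, whose combination is \eqref{AppQFinal}. In particular the $\tfrac{\eps}{4}$ correction does not come from ``the next-order term in the expansion of $B(x)$ and the contribution of $P(x)Q(x)$'' as you suggest --- those terms are shown to be \emph{negligible} in this regime --- but from the large-$\xi$ balance in $J[h]$ itself, where $\int_0^{\delta\xi}(1-e^{-\zeta/2})\zeta^{\eps/2-1}\,d\zeta\sim\frac{2}{\eps}(\delta\xi)^{\eps/2}$ forces $r(\xi)\sim\frac{\eps}{4\xi}$. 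Your claimed ``$\sqrt{\pi}$ amplification produced by the matching'' has no counterpart in the actual computation.
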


\begin{proof}
Due to (\ref{Miteration}) it follows
that $M_{n}$ becomes larger than $\frac{1}{2}$ for sufficiently large $n$.
By making a translation of the variable $x$ in (\ref{GasymptModified}) if needed, we can
assume that for some values of $L\in[  1,e^{\frac{\pi\varepsilon}{2}})  $ the corresponding value of $M_{n}$ is larger than $\frac{1}{2}$
and for other values $L\in [1,e^{\frac{\pi\varepsilon}{2}})  $ we
have $M_{n}<\frac{1}{2}.$

We recall that if $M_{n}<\frac{1}{2}$ we can approximate $G(x)$ arguing as in Subsections \ref{Ss.ode1}-\ref{Ss.ode2}. Then $A(x),B(x)$ 
converge to values where $A(x)  =A_{\infty}$ and $B(x)  $ is of order one for
$x=x_{n+1}$. 
On the other hand if $M_{n}>\frac{1}{2}$ 
Lemma \ref{L.kinetic} implies that $B(x)  $ changes sign at some finite $x$. Given that the solutions of (\ref{ODE1}%
)-(\ref{Ident}) depend continuously on the parameter $L$ in
(\ref{GasymptModified}) we  have
the existence of $L=L_{\ast}\in[1,e^{\frac{\pi\varepsilon}{2}})$ for which the corresponding solution of (\ref{ODE1})-(\ref{Ident})
neither changes sign nor arrives to values of $A(x)  =A_{\infty}$
with $B(x)  $ of order one. We will argue that for $L_{\ast}$ we  have $G(x) \to 0$ 
as well as  $(A(x),B(x)) \to (0,0)$ as 
 $x\rightarrow\infty$.

To see this we first remark that due to the arguments above, $M_{n}$ must take
the value $\frac{1}{2}$ or is close to it for small $\varepsilon.$ Then
during the kinetic regime $G(x)  $ is approximated by
$G_{1}(x)  $ (cf. Lemma \ref{L.kinetic}) and at the end of that
phase the asymptotics of $G(x)$ is given by the
right-hand side of (\ref{Gone}). Then $G(x)  $ cannot be  approximated by ODEs as in the case for
$M_{n}<\frac{1}{2},$ but  we will approximate $G(x)  $ by a suitable integral equations. For notational simplicity we assume for the moment that $\hat x_n=0$ and  introduce the
 variables $\xi=e^{x}$, $\eta=e^{y}$,$\zeta=e^{z}$, $\xi^2h(\xi)=G(x)$, 
$\bar{a}(\xi)  =A(x)$, $\bar b(\xi)=B(x)$, $\bar p(\xi)=P(x)$ and $\bar q(\xi)=Q(x)$.
Then (\ref{ODE1})-(\ref{Ident}) becomes
\begin{align}
\xi\frac{d\bar{a}}{d\xi}   =-\frac{\varepsilon}{2}\bar{a}+\frac{1}{2}%
\xi^{2}h \,,& \qquad \qquad  \xi\frac{d\bar{b}}{d\xi}=\frac{\varepsilon}{2}\bar{b}-\frac{1}{2}\xi^{2}h\,, \label{EqA1}\\
\xi\frac{d\bar{p}}{d\xi}   =-(  1-\frac{\varepsilon}{2}) \bar{p}+\xi^{2}h\,,& \qquad \qquad  \xi\frac{d\bar{q}}{d\xi}=\big(  1-\frac{\varepsilon}%
{2}\big)  \bar{q}-\xi^{2}h\,,\label{EqA2}%
\end{align}%
\begin{equation}
\xi^{2}h=4\bar{a}\bar{b}+\bar{p}\bar{q}+J[h]\,,  \label{hForm}%
\end{equation}
where
\begin{equation}
J[h](\xi) =\int_{0}^{\xi}\eta h(\eta) d\eta\int_{\xi-\eta}^{\xi}h(\zeta)(\eta \zeta)^{\frac{\varepsilon}{2}%
}\big(    \eta^{1-\varepsilon}+\zeta^{1-\varepsilon
}\big)  d\zeta\label{IntOp}%
\end{equation}

On the other hand, since $G(x)  $ behaves like the right-hand
side of (\ref{Gone}) we obtain
\begin{equation}
h(\xi)  \sim\frac{1}{2\sqrt{2\pi}}\frac{e^{-\frac{\xi}{2}}}%
{\xi^{\frac{3}{2}}} \qquad \mbox{ for } \xi^{\eps} \sim 1\,, \mbox{ i.e. } \ln \xi \sim \frac{1}{\varepsilon}\,.\label{AsExp}%
\end{equation}
We remark that in the set of values $\xi$ for which the approximation
(\ref{AsExp}) holds we have that $\bar{a}$ is of order one, $\bar{p}$ is of
order $\frac{1}{\xi}$ and $\bar{b},\ \bar{q}$ are of order $\frac{e^{-\frac
{\xi}{2}}}{\sqrt{\xi}}.$ 
Therefore, the terms $4\bar{a}\bar{b}$ and $\bar{p}\bar{q}\ $in
(\ref{hForm}) can be neglected
and (\ref{hForm}) can be approximated by
\begin{equation}
\xi^{2}h=J[h] \,. \label{VoltApp}%
\end{equation}

We now use the approximation (\ref{VoltApp}) to describe how the asymptotics
(\ref{AsExp}) is modified  if
$\xi^{\varepsilon}$ becomes of order one or larger. To this end 
we will look for solutions of (\ref{VoltApp}) of  the form $h(\xi)=r(\xi)e^{-\frac{\xi}{2}}$,
where we assume that $r(\xi)  $ contains only functions
that change algebraically for large $\xi$, i.e. $r(\xi+1) \simeq r(\xi)  $.  Then (\ref{VoltApp})
becomes
\begin{equation}\label{VoltApp2}
 \begin{split}
\xi^{2}r(\xi) & =\int_{0}^{\xi}r(\zeta)d\zeta \int_{\xi-\zeta}^{\xi}\eta r(\eta)(\eta \zeta)^{\frac{\varepsilon}{2}}(\eta^{1-\varepsilon
}+\zeta^{1-\varepsilon}) e^{\frac{1}{2}(\xi-\eta-\zeta)  }d\eta\\
& =\int_{0}^{\xi}r(\zeta)d\zeta \int_{0}^{\zeta}(\xi{-}\zeta{+}\theta)  Q(\xi{-}\zeta{+}\theta)  (   (\xi-\zeta+\theta)\zeta)^{\frac{\varepsilon}{2}}
\big( (\xi-\zeta+\theta)^{1-\varepsilon}+  \zeta^{1-\varepsilon}\big)  e^{-\frac{1}{2}%
\theta}d\theta\,.
\end{split}
\end{equation}

For large values of $\xi$ the function $r(\xi-\zeta+\theta)  $ changes more slowly than 
$e^{-\frac{1}{2}\theta}$ such that 
\begin{equation}\label{VoltApp3}
 \begin{split}
\xi^{2}r(\xi) & =\int_{0}^{\xi}r(\zeta)d\zeta\int_{0}^{\zeta}(\xi-\zeta)r(\xi-\zeta)   (  (\xi-\zeta)\zeta)  ^{\frac{\varepsilon}{2}}
\big( (\xi-\zeta)^{1-\varepsilon}+ \zeta^{1-\varepsilon}\big)  e^{-\frac{1}{2}\theta}d\theta\\
& =2\int_{0}^{\xi}r(\zeta)(\xi-\zeta)r(\xi-\zeta)(  (\xi-\zeta)\zeta)^{\frac{\varepsilon}{2}}\big(  (\xi-\zeta)^{1-\varepsilon}+\zeta^{1-\varepsilon}\big)
\big(  1-e^{-\frac{1}{2}\zeta}\big)  d\zeta\,.
\end{split}
\end{equation}
For large values
of $\xi$ and small values of $\xi$ the main contribution to the integral in
(\ref{VoltApp3}) is due to the values of $\zeta$ satisfying $\zeta\leq\delta \xi,$ for
$\delta$ small. This will be  checked later "a posteriori".
Thus, if $\zeta\leq\delta \xi$ we
have the approximations $(\xi-\zeta)^{1+\frac{\varepsilon}{2}}%
\sim   \xi^{1+\frac{\varepsilon}{2}},\ (\xi-\zeta)^{1-\varepsilon}+ \zeta^{1-\varepsilon}\sim \xi^{1-\varepsilon}$
such that
\begin{equation}
r(\xi) =2\xi^{-\frac{\varepsilon}{2}}\int
_{0}^{\delta \xi}r(\zeta)r(\xi-\zeta)\big( 1-e^{-\frac{1}{2}\zeta}\big)  \zeta^{\frac{\varepsilon}{2}}d\zeta\,.\label{VoltApp4}%
\end{equation}

We recall that (\ref{AsExp}) yields a valid approximate solution of
(\ref{VoltApp4}) if $\xi^{\varepsilon}\sim1.$ For these  $\xi$ we have 
\begin{equation}
r(\xi) \sim\frac{1}{2\sqrt{2\pi}}\frac{1}{\xi^{\frac{3}{2}}}\,. \label{AppQ1}%
\end{equation}

Indeed, plugging (\ref{AppQ1}) into (\ref{VoltApp4}) and using $r(\xi-\zeta) \approx r(\xi)  $ for large $\xi$ we readily obtain that
(\ref{VoltApp4}) reduces to checking the identity
\[
1\approx\frac{2  \xi^{-\frac{\varepsilon}{2}}}{2\sqrt{2\pi}}%
\int_{0}^{\delta \xi}\big(  1-e^{-\frac{1}{2}\zeta}\big)   \zeta^{\frac{\varepsilon}{2}-\frac{3}{2}}d\zeta
\approx\frac{ \xi^{-\frac{\varepsilon}{2}}}{\sqrt{2\pi}}\int_{0}^{\infty}\big(  1-e^{-\frac{1}{2}\zeta}\big)   \zeta^{\frac{\varepsilon}{2}-\frac{3}{2}}d\zeta\,.
\]
Using that $\int_{0}^{\infty}(1-e^{-\frac{1}{2}\zeta})\zeta^{-\frac{3}{2}}d\zeta=\sqrt{2\pi}$ this approximation follows due to the
assumption $\xi^{\varepsilon}\sim1.$
For $\xi^{\varepsilon}\gg1$ a scaling argument
suggests the asymptotics
$ r(\xi) \sim\frac{\bar{C}}{\xi}$
for some $\bar{C}>0$. Plugging this into
(\ref{VoltApp4}) and using also that $r(\xi-\zeta)  \approx r(\xi)  $ we reduce (\ref{VoltApp4}) to
$1\approx2\bar{C} \xi^{-\frac{\varepsilon}{2}}\int_{0}^{\delta
\xi}\big(  1-e^{-\frac{1}{2}\zeta}\big)\zeta^{\frac{\varepsilon
}{2}-1}d\zeta\approx\frac{4\bar{C}}{\varepsilon}$,
such that $\bar {C}=\frac{\varepsilon}{4}.$ 
For $\xi^{\varepsilon}\gg1$ we have thus obtained
\begin{equation}
r(\xi) \sim\frac{\varepsilon}{4\xi}\,.\label{AppQ3}%
\end{equation}
Notice that  the contributions due to the part of the integrals
with $\zeta>\delta \xi$ which have been neglected in (\ref{VoltApp3}) in order to
derive (\ref{VoltApp4}) give a small contribution if $\xi$ is large and
$\varepsilon$ small for both solutions (\ref{AppQ1}) and (\ref{AppQ3}).

We can now combine the solutions (\ref{AppQ1}) and (\ref{AppQ3}) to obtain a
global solution of (\ref{VoltApp4}) that is  valid for arbitrarily large values of $\xi.$
Indeed, it is easy to check that
\begin{equation}
r(\xi) \sim\frac{1}{2\sqrt{2\pi}}\frac{1}{\xi^{\frac{3}{2}}}%
+\frac{\varepsilon}{4\xi}\label{AppQ3a}%
\end{equation}
satisfies \eqref{VoltApp4} approximately for arbitrary values of $\xi$ and $\varepsilon$.

In order to check the consistency of the approximation (\ref{AppQ3a}) we
must check that the terms $4\bar{a}\bar{b}$ and $\bar{p}\bar{q}$ in
(\ref{hForm}) are negligible. It follows from integrating \eqref{EqA1}-\eqref{EqA2},
 using the definition of $r$ and (\ref{AppQ3a}),
 that
$\bar{a}(\xi)  \leq\frac{C}{\xi^{\frac{\varepsilon}{2}}}$, $\bar{b}(\xi)  \leq C\xi h(\xi)$,
$\bar {p}(\xi)  \leq\frac{C}{\xi^{1-\frac{\varepsilon}{2}}}$ and  $\bar{q}(\xi)  \leq C\xi h(\xi)$.
Therefore $4\bar{a}\bar{b}+\bar{p}\bar{q}\ll \xi^{2}h$ for large $\xi$ and the
self-consistency of the argument follows.
It is important to remark, however, that although the term $4\bar{a}\bar{b}$
can be neglected in (\ref{hForm}), the equation for $\bar{b}(\xi)
$ in (\ref{EqA1}) shows that for some solutions of (\ref{ODE1}%
)-(\ref{Ident}) the function $B(x)  $ can start an exponential
growth taking place in lengths of $x$ of order $\frac{1}{\varepsilon}$.
In other words, the solutions of (\ref{ODE1})-(\ref{Ident}) might
contain dormant instabilities 
that result either in a change of sign of $B$ or in $B$
becoming large enough and thus the solution entering another cycle in the $(A,B)$-plane.
Thus, this instability corroborates the fact
that the desired solution  is obtained by a shooting argument and appears 
as the transition between the ODE behaviour described in Subsections \ref{Ss.ode1}-\ref{Ss.ode2} and
$B$ changing sign.

Finally, amending for the fact that we had assumed that $\hat x_n=0$, we obtain from \eqref{AppQ3a} the result  \eqref{AppQFinal}.
\end{proof}

\section{Fast dynamics}\label{S.fast}

We have seen in the asymptotics of the function $G$ in \eqref{GasymptModified} that the term $C_*e^{\mu_*x}$ changes much faster
than the other terms in the formula \eqref{GasymptModified}  due to the  exponentially growing mode for $Q$ in \eqref{ODE2}.
If $C_*$ is of order one, the instability will appear for $x$ of order one. However, if $C_*$ is very small, the exponentially growing
instability  will  only become visible for large values of $x$. 

For the solution computed in  Section \ref{S.wave} the following holds. In all the regions where the solution can be approximated by ODEs 
the function $G(x)$ changes in length scales much larger
than one and in particular, in all those regions we have $|Q-G| \ll Q$.
However, if $C_*$ is sufficiently small, there exists an $x_*\gg 1$
\begin{equation}
Q(x)  =G(x_*)\Big( 1 \pm e^{x-x_* }\Big) +...
\label{Masympt}%
\end{equation}
for $x<x_*$ and  $|x-x_*|\gg 1 $.
 In this case the instability is triggered in $x =x_*$.
We can obtain both, the positive and the  negative sign in \eqref{Masympt} by choosing $|C_*|$ in \eqref{GasymptModified} sufficiently large
and positive or negative respectively.
Moreover, modifying the value of $C_*$ we can tune the value of $x_*$.
It turns out that if the instability is triggered in any region where the solution of \eqref{ODE1}-\eqref{Ident} can be approximated by an ODE, then either $Q$, 
if the sign in \eqref{Masympt}
is minus, or $B$  if otherwise, become negative and hence the solution cannot represent an admissible solution to the coagulation equation (cf. Sections \ref{Ss.fastinstability} 
and \ref{Ss.odeinstability}).

It is also  possible to choose the value of $C_*$ such that the instability is triggered in the kinetic regime, described in Section \ref{Ss.kinetic}. 
In this regime $|Q-G|$ is not small compared to $G$ anymore. Here we have to take into
account that the Volterra problem \eqref{ODE1}-\eqref{Ident} with 
$\eps=0$ has more solutions than those given by Lemma \ref{L.kinetic}. The general solution has the form ${ Q}^{\alpha}_{\rho}(x)$
with ${Q}^{\alpha}_{\rho}(x)-Q_{\rho}(x) \sim \alpha e^x$ as $x \to -\infty$
with arbitrary $\alpha$
and we denote by ${G}^{\alpha}_{\rho}$ the corresponding function $G$. 
Recall that for our special solutions we have $G(x) \sim G_{\rho}(x{-}{\hat x}_n)$ for $|x{-}{\bar x}_n|$ of order one. 
However, the difference $G_{\rho}-{G}^{\alpha}_{\rho}$ differs by an exponentially small amount for $x<\hat x_n$ and  $|x{-}\hat x_n |\gg 1$, hence there is a priori no 
reason to choose $G_{\rho}$ instead of ${G}^{\alpha}_{\rho}$. We will see later, however, in Section \ref{Ss.instkinetic} that if $\alpha \not= 0$,
then either $Q$ or $B$ become negative and again we do not obtain an admissible solution. 

Therefore, adjusting the value of $C_*$ we can obtain a solution for which no instability is triggered neither in the ODE regime, nor in the kinetic regime.

\subsection{Fast instabilities appearing during the Lotka-Volterra regime.}\label{Ss.fastinstability}

We first assume that the instability is triggered in the Lotka-Volterra regime, see Section \ref{Ss.lotkavolterra}, that is the regime with
energy $E \ll \frac{1}{\eps}$.

 Suppose that the instability appears at a given $x_*$, for which the ODE approximation is still valid. This would mean that the
asymptotics (\ref{Masympt}) holds for $x<x_*$ and $\frac{1}{\eps} \gg | x-x_* | \gg 1$.

If \eqref{Masympt} holds with a minus sign, we can approximate  \eqref{ODE1}-\eqref{Ident} as follows. 
Lemma \ref{L.Gapprox} implies that $J[G] \leq C (G^2+GA)$. In the Lotka Volterra regime we have $G\ll 1$ and $A \ll 1$ and hence
$J[G]$ can be neglected in \eqref{Ident}.
As long as $G$ remains approximately constant, we can approximate $Q$ by
\begin{equation}\label{Nminus}
 \frac{dQ}{dx} = \big(1-\frac{\eps}{2}\big) Q -G\,,\qquad Q(x) = G(x_*)\Big( 1-e^{(1-\frac{\eps}{2})(x-x_*)}\Big)\,,
\end{equation}
hence $Q$ becomes negative for $x=x_*$. Since $Q$ is decreasing, the term $PQ$ in \eqref{Ident} remains negligible, the other variables essentially do not 
change and the approximation is self-consistent.

If \eqref{Masympt} holds with a plus sign, the function $Q$ becomes of order one if $x \sim x_1$ where 
\begin{equation}\label{X1}
G(x_*) e^{(x_1-x_*)}=1\,.
\end{equation}
Since $G(x_*)$ can be very small if the energy $E$ is large, it might happen that $|x_1-x_*| \to \infty$ as $\eps \to 0$. During this part of the evolution 
the variables $a$ and $b$ are described by \eqref{leadingorderode} and hence if $|x_1-x_*|$ is large, we might have that the difference between $(a(x_1),b(x_1))$
and $(a(x_*),b(x_*))$ is large. This phenomenon can be particularly relevant if $E \sim \frac{1}{\eps \ln \frac{1}{\eps}}$ and $E \approx a+b$, since then 
the characteristic scale in the Lotka-Volterra-regime is of order one up to logarithmic corrections. Therefore  $|x_1-x_*|$ might be much larger than this scale. 
However, we will prove an estimate that shows that the change of $(a,b)$ in the interval $[x_*,x_1]$ is negligible in the part of the Lotka-Volterra cycle
contained in   $\{a \leq 1\} \cup \{b \leq 1\}$. Indeed, the first equation \eqref{leadingorderode} implies that
$  \frac{da}{dx} \geq -\eps a$ and hence, using \eqref{X1},  $G(x_*) \approx \frac{\eps^2}{4} a(x_*)b(x_*)$ and $a(x_*) b(x_*) \geq C e^{-E} $, we find
\[
 a(x_1) \geq a(x_*) e^{\eps \ln G(x_*)} = a(x_*) G(x_*)^{\eps}\geq a(x_*) e^{\eps \ln (C\eps^2)} e^{-\eps E} \geq \frac{1}{2} a(x_*)\,. 
\]
Similarly, since $a(x_*), b(x_*) \leq 2E$, it follows that $b(x_1) \leq 2 b(x_*)$. In particular $a$ and $b$ cannot make more than one cycle in 
the Lotka-Volterra phase plane.

In order to describe the regime when $Q$ becomes of order one, we introduce  the new variable $t=x-x_1$. Then 
equation \eqref{Masympt} implies the matching condition
\begin{equation}
 \label{Nmatching}
 Q(t) = e^{t}  \qquad \mbox{ as } t \to -\infty\,.
\end{equation}
Denote by
$ A_1:=A(x_1)$, $B_1:=B(x_1)$ and $P_1:= 4A_1 B_1$
and define
$A=A_1\hat a$, $B=B_1 \hat b$ and $P = P_1 \hat p$.
Then we obtain 
\begin{align}
 \frac{d\hat a}{dt}&= - \frac{\eps}{2}\hat a + \frac{1}{2} \Big( 4 B_{1} \big(\hat a\hat b +  \hat p Q\big) + \frac{J[G]}{A_{1}}\Big)\,, \label{atau}\\
 \frac{d\hat b}{dt}&= \frac{\eps}{2} \hat b - \frac{1}{2} \Big( 4 A_{1}\big ( \hat a \hat b + \hat p Q \big)+ \frac{J[G]}{B_{1}}\Big)\,,\label{btau}\\
 \frac{d\hat p}{dt}&= -\ell + \Big(  \hat a \hat b + \hat pQ + \frac{J[G]}{P_{1}}\Big)\,, \label{elltau}\\
 \frac{dQ}{dt}&= Q- \big( P_1\big(\hat a \hat b + \hat  p Q \big) + J[G]\big)\,.\label{Ntau}
\end{align}
Using Lemma \ref{L.Gapprox} we can check that the terms involving $J[G]$ can be neglected in \eqref{atau}-\eqref{Ntau}. Since furthermore $A_1,B_1$ 
and $P_1$ are small, we obtain for $t $ of order one the 
following approximation of  \eqref{atau}-\eqref{Ntau}
 \begin{equation}\label{abtau}
 \frac{d\hat a}{dt}= 
 \frac{d\hat b}{dt}= 0  \,, \qquad 
 \frac{d\hat p}{dt}= -\hat p + \hat a \hat b + \hat p Q \,\qquad \mbox{ and } \qquad 
 \frac{dQ}{dt}= Q \,.
\end{equation}
Using the matching condition \eqref{Nmatching} we obtain $\hat a\equiv 1, \hat b \equiv 1$,
\begin{equation}\label{lformula}
\hat p =  e^{Q(t) -t} \Big( 1-e^{-Q(t)}\Big) \qquad \mbox{ and } \qquad Q(t) = e^{t}\,.
\end{equation}
Assume now that for large $t$ we have that 
$\hat p Q \gg \hat a \hat b$. We can then approximate \eqref{atau} and \eqref{elltau}, assuming for the moment that the nonlocal 
terms can still be neglected, by
\begin{equation}\label{abtaunew}
 \frac{d\hat a}{dt} = -\frac{\eps}{2}\hat a + 2 B_1 \hat p Q \,, \qquad \frac{d \hat b}{dt} = \frac{\eps}{2}\hat b - 2A_1 \hat p Q
 \qquad \mbox{ and } \qquad \frac{d \hat p}{dt} = \hat p (Q-1)\,.
 \end{equation}
The equation for $\hat b$ in \eqref{abtaunew} implies that 
\[
\hat  b(t) = e^{\frac{\eps}{2}t} - 2A_1\int_0^{t} e^{\frac{\eps}{2}(t-s)} (\hat p Q)(s)\,dx \approx  e^{\frac{\eps}{2}t} -
2A_1\int_0^{t} e^{e^{s}}\,ds
 \approx  e^{\frac{\eps}{2}t} - 2A_1 e^{-t} e^{e^{t}}\,.
\]
Then 
 $\hat b(t_{\eps})=0$ with $t_{\eps}$ as
 \begin{equation}\label{taueps}
e^{ e^{t_{\eps}} } e^{-t_{\eps}} \sim  \frac{1}{2A_1}\,,\qquad \mbox{ that is } \; t_{\eps} \sim \ln \Big(\ln \Big( \frac{1}{2A_1}\Big) \Big)\,.
\end{equation}
Since $A_1 \geq C e^{-E}$ we also have $t_{\eps} \leq \ln E \leq \ln \big(\frac{1}{\eps}\big)$. 
Notice that this justifies a posteriori  that one can neglect the term $\frac{\eps}{2}\hat b$ in the equation for $b$ even though $A_1$ might be very small.  

We now check that the assumption $\hat a \hat b \ll \hat p Q$ indeed holds for all $t \in [0,t_{\eps})$, which is not clear a priori given 
that $B_1$ could be much larger than $A_1$ and this could
produce a large growth of $\hat a$ in this interval.
Due to the first  equation in \eqref{abtaunew} we have
$\hat a (t) \leq 1+2B_1 \int_0^{t} (\hat p Q)(s)\,ds$ and the third equation in \eqref{abtaunew} give $\hat p (t) = e^{Q(t)-t}$.
This  gives $\hat a(t) \leq C B_1 \hat p (t)$, whence, since $\hat b$ is bounded,  $\hat a(t) \hat b(t) \leq CB_1\hat p(t)$.
Since $Q(t) \geq 1$ and since
$B_1 \ll 1$ we indeed obtain $\hat a\hat b \ll \hat p Q$.

In order to show the consistency of the whole approximation, it remains to check that the 
nonlocal term $J[G]$ remains small.
This is feasible, but requires some tedious estimates. In order not to interrupt too much the flow of the argument, we just state the main result.

The key remark  in order to compute the variations of $J[G]$ is that $\eps \hat p Q$ can be approximated by a mollification of the Dirac mass as follows
\begin{equation}\label{ellNapprox}
\begin{split}
 2A_1  \hat p Q &\approx 2A_1 e^{Q(t)}\big(1-e^{-Q(t)}\big) \approx  2A_1 e^{ e^{t}}\\
 &=
 2A_1 e^{e^{t_{\eps}} e^{t-t_{\eps}}}
 =2A_1 e^{e^{t_{\eps}} [e^{t-t_{\eps}}-1]} e^{e^{t_{\eps}}}
\approx \frac{1}{\delta_{\eps}}  e^{\frac{1}{\delta_{\eps}} (t-t_{\eps})}\,
\end{split}
\end{equation}
where we used the definition of $t_{\eps}$ in \eqref{taueps} and where 
 $\delta_{\eps}= \frac{1}{e^{t_{\eps}}} \approx \ln \frac{1}{2A_1}$. 
 
 By estimating carefully all the terms in $J[G]$ and using the previously obtained asymptotics we can obtain by lengthy but otherwise rather straightforward
 estimates that for sufficiently large $R>0$
 \begin{equation}\label{Jbound}
J[G](x)  \leq B_{1}\Big(  A_{1}+\frac{e^{\frac
{1}{\delta_{\varepsilon}}e^{x-x_{1}-t_{\varepsilon}}}}{\delta_{\varepsilon
}}\chi_{\{  t_{\varepsilon}-R\leq x-x_{1}\leq t_{\varepsilon}\}  }\Big)  \qquad \mbox{ for } \; x_{1}\leq x\leq x_{1}+t_{\varepsilon}\,.
\end{equation}

We need to compare the size of this nonlocal term with the main terms in the
equations  (\ref{atau})-(\ref{Ntau}) and check that the
contribution of this nonlocal term is small. This requires the inequalities:%
\begin{equation}
\frac{J[G]  }{A_{1}}\ll B_{1}\hat p Q\,,\qquad \frac{J[G]  }{B_{1}} \ll A_{1}\hat p Q\,, \qquad \frac{J[G] }{P_{1}}\ll\hat p Q\ \label{NonLocApp}%
\end{equation}
which are all equivalent. Moreover, the same inequality implies also that the term
$J[G]  $ in (\ref{Ntau}) gives a negligible contribution
compared with $P_{1}\hat p Q$ and it is possible to approximate this equation by
 an ODE.

In order to derive (\ref{NonLocApp}) we notice that (\ref{lformula}) and
(\ref{ellNapprox}) imply
\[
2A_{1}\hat p Q\geq C_{1}\Big(  A_{1}+\frac{e^{\frac{1}{\delta_{\varepsilon}%
}e^{x-x_{1}-t_{\varepsilon}}}}{\delta_{\varepsilon}}\chi_{\{
t_{\varepsilon}-R\leq x-x_{1}\leq t_{\varepsilon}\}  }\Big)
\]
for some $C_1>0$ and for $x_{1}\leq x\leq x_{1}+t_{\varepsilon}.$ Then (\ref{NonLocApp}) 
follows from \eqref{Jbound}.

We also remark that in the estimates of $J[G]$ we do not use Lemma \ref{L.Gapprox} due to the fast growth of $G$ that implies that the second inequality in \eqref{G1} fails.

\subsection{Instability is triggered for values of $x$ such that $E\sim\frac{1}{\varepsilon}.$}
\label{Ss.kineticinstability}

We now consider the case in which the instability is triggered at  times in which the energy $E$ defined in (\ref{energydef}) is of order $\frac
{1}{\varepsilon}.$ We recall that for these values of $E$ there are some values of $x$ for which we describe the solution $G,A,B,P,Q$ by 
solutions of the integro-differential equation (\ref{ODE1})-(\ref{Ident}) with $\varepsilon=0$ (cf. Subsection \ref{Ss.kinetic}),
while for other values 
 (\ref{ODE1})-(\ref{Ident}) is approximated by  a
system of ODEs (cf. Subsections \ref{Ss.ode1}, \ref{Ss.absmall} and \ref{Ss.ode2}). We will denote the first
set of values of $x$, where $G$ is of order one,  as values of Type I and the second set, where $G\ll 1$,
as values of Type II.
For $x$
of Type II the values of the functions $G,A,B,P,Q$ change in a scale 
much longer than one. Then, the triggering of the instabilities at some
$x=x^{\ast}$ can be described by the asymptotic behaviour for $x<x^{\ast}$ and  $|x-x^{\ast}|\gg 1$.
For values  $x$ of Type I the characteristic length scale for the
development of the instability given by the equation for $Q$ in
(\ref{ODE2}) is the same as  for the
variation of $G$. Then  we cannot obtain a definition like (\ref{Masympt})
for the value of $x^{\ast}$ in which the instability is triggered. 
In this case we need to study  in detail the solutions of the Volterra-like problem
with additive kernel \eqref{A1}-\eqref{A3}.

\subsubsection{Additional solutions of  problem \eqref{A1}-\eqref{A3}.}

In  Lemma \ref{L.kinetic} we obtained a family of solutions to  \eqref{A1}-\eqref{A3}, that correspond to solutions of the coagulation equation.
These are however not the only solutions and in the following proposition, which is proved in the Appendix, we construct additional solutions with different 
higher order asymptotics as $x \to -\infty$.

\begin{prop}
\label{P.VoltLikePb}

Suppose that for $\rho\in(0,2]  $ we denote by
$A_{\rho},B_{\rho},P_{\rho},Q_{\rho}$ and $G_{\rho}$ the solution in Lemma
\ref{L.kinetic} that are  characterized by the asymptotics
(\ref{Grhominus}). 
For any $\alpha \in \R$ we define
\begin{equation}\label{Galphadef}
 G_{\rho}^{\alpha}(x) = G_{\rho}(x) e^{\alpha e^x}
\end{equation}
and, with $B_{\infty}:=B_{\rho}(-\infty)$,
\begin{equation}\label{alphadefs}
\begin{split}
 A_{\rho}^{\alpha}(x) = \frac{1}{2}\int_{-\infty}^x G_{\rho}^{\alpha}(y)\,dy\, ,& \qquad  B_{\rho}^{\alpha}(x) = B_{\infty}- 
 \frac{1}{2} \int_{-\infty}^{0} G_{\rho}^{\alpha}(y)\,dy\\
 P_{\rho}^{\alpha}(x) = \int_{-\infty}^x G_{\rho}^{\alpha}(y)e^{y-x}\,dy\,,& \qquad  Q_{\rho}^{\alpha}(x) = Q_{\rho}(x)e^{\alpha e^x}+ \alpha e^{x} \Big(
 1-\int_{-\infty}^x e^{\alpha e^y}Q_{\rho}(y)\,dy\Big)\,.
 \end{split}
 \end{equation}
Then $A_{\rho}^{\alpha}, \dots, G_{\rho}^{\alpha}$ solve \eqref{A1}-\eqref{A3}.

Furthermore, for $\rho \in (0,1]$, if $\alpha<0$, we obtain that $Q_{\rho}^{\alpha}(x)$ becomes negative for some $x\in \R$,
while if $\alpha>0$ then $B_{\rho}^{\alpha}$ becomes negative for some $x \in \R$.
\end{prop}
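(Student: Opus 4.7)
The plan is to establish the proposition in three steps: verifying the ODEs \eqref{A1}-\eqref{A2} for the modified tuple, then the algebraic identity \eqref{A3}, and finally the sign-change assertions.

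First, I would check that $(A_\rho^\alpha, B_\rho^\alpha, P_\rho^\alpha, Q_\rho^\alpha, G_\rho^\alpha)$ satisfies the ODEs by direct differentiation of the integral definitions \eqref{alphadefs}. (Here the formula for $B_\rho^\alpha$ must be read as $B_\infty - \tfrac12\int_{-\infty}^x G_\rho^\alpha(y)\,dy$ to be compatible with $dB/dx = -G/2$.) The relations for $A_\rho^\alpha, B_\rho^\alpha, P_\rho^\alpha$ are immediate from the fundamental theorem of calculus. For $Q_\rho^\alpha$ one differentiates the explicit formula and uses $dQ_\rho/dx = Q_\rho - G_\rho$; the two terms of the form $\pm\alpha e^x e^{\alpha e^x}Q_\rho(x)$ (one from the chain rule applied to $e^{\alpha e^x}$ and one from the upper limit of $\int_{-\infty}^x e^{\alpha e^y}Q_\rho(y)\,dy$) cancel exactly, leaving $dQ_\rho^\alpha/dx = Q_\rho^\alpha - G_\rho^\alpha$.

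Next, the algebraic identity \eqref{A3} is the core technical step. Setting
\[
\Psi_\alpha(x) := G_\rho^\alpha(x) - 4A_\rho^\alpha(x)B_\rho^\alpha(x) - P_\rho^\alpha(x)Q_\rho^\alpha(x) - J[G_\rho^\alpha](x),
\]
one verifies $\Psi_\alpha(-\infty) = 0$ termwise, using $G_\rho, A_\rho, P_\rho, Q_\rho \to 0$, $B_\rho^\alpha \to B_\infty$, and the exponential decay of the integrand of $J$. Passing to $\xi = e^x$ variables gives $h_\rho^\alpha(\xi) = h_\rho(\xi)e^{\alpha\xi}$ and the target identity becomes $\xi^2 h^\alpha = 4\bar a^\alpha \bar b^\alpha + \bar p^\alpha \bar q^\alpha + J[h^\alpha]$. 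The key observation is the factorization $e^{\alpha(\eta+\zeta)} = e^{\alpha\eta}e^{\alpha\zeta}$ in
\[
J[h^\alpha](\xi) = \int_0^\xi \eta h(\eta) e^{\alpha\eta}\,d\eta \int_{\xi-\eta}^\xi h(\zeta) e^{\alpha\zeta}(\eta+\zeta)\,d\zeta,
\]
which, after splitting the kernel $(\eta+\zeta)$ and rearranging the order of integration, produces exactly the pieces needed to reconstruct $4\bar a^\alpha \bar b^\alpha + \bar p^\alpha \bar q^\alpha$ from the analogous structure of the $\alpha=0$ identity furnished by Lemma \ref{L.kinetic}. I expect this bookkeeping to be the main obstacle, since the nonlocal integral must be manipulated with care to close the algebra.

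Finally, for the sign-change claims for $\rho \in (0,1]$: if $\alpha > 0$, then $G_\rho \geq 0$ and $e^{\alpha e^y} > 1$ pointwise give $\int_{-\infty}^\infty G_\rho^\alpha(y)\,dy > M_\rho = 2 B_\infty$ strictly (possibly as $+\infty$), whence $B_\rho^\alpha(x) \to B_\infty - \tfrac12 \int G_\rho^\alpha < 0$ as $x \to \infty$. If $\alpha < 0$, I would integrate the linear ODE $(We^{-x})' = -(G_\rho^\alpha - G_\rho) e^{-x}$ for $W := Q_\rho^\alpha - Q_\rho$ using the matching condition $W(x)e^{-x} \to \alpha$ as $x \to -\infty$ (obtained from $e^{\alpha e^x} - 1 \sim \alpha e^x$ together with $Q_\rho(x)e^{-x} \to 0$), which yields the clean representation
\[
Q_\rho^\alpha(x) = Q_\rho(x) + e^x\left(\alpha - \int_{-\infty}^x G_\rho(y)(e^{\alpha e^y} - 1) e^{-y}\,dy\right).
\]
Writing $C_\alpha$ for the $x = \infty$ value of the integral, one computes $\tfrac{d}{d\alpha}(\alpha - C_\alpha) = 1 - \int_{-\infty}^\infty G_\rho(y)e^{\alpha e^y}\,dy \geq 1 - M_\rho > 0$ for $\rho \in (0,1]$ and $\alpha \leq 0$. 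Since $(\alpha - C_\alpha)|_{\alpha=0} = 0$, monotonicity yields $\alpha - C_\alpha < 0$ for $\alpha < 0$, and because $Q_\rho(x)e^{-x} \to 0$ at $+\infty$ (as $Q_\rho \lesssim e^{-\rho x}$), this forces $Q_\rho^\alpha(x) \sim (\alpha - C_\alpha)e^x \to -\infty$, which gives the required sign change.
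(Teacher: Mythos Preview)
Your verification of the ODEs \eqref{A1}--\eqref{A2} is fine and matches the paper. Your sign-change arguments are also correct but diverge from the paper's. For $\alpha>0$ both proofs are essentially the same. For $\alpha<0$ the paper works directly from the defining formula for $q^\alpha$ in the $\xi$-variable: since $\int_0^\infty \frac{q_\rho(\eta)}{\eta}\,d\eta = \int_{-\infty}^\infty Q_\rho = M_\rho < 1$, the bracket $\big(1-\int_0^\xi e^{\alpha\eta}\tfrac{q_\rho(\eta)}{\eta}\,d\eta\big)$ stays bounded below by $1-M_\rho>0$, so $\alpha\xi(\cdots)\to -\infty$ immediately. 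Your route via the representation $Q_\rho^\alpha = Q_\rho + e^x(\alpha - \int_{-\infty}^x G_\rho(e^{\alpha e^y}-1)e^{-y}\,dy)$ and monotonicity of $\alpha\mapsto \alpha - C_\alpha$ is a legitimate alternative and arguably cleaner, since it isolates the single scalar $\alpha - C_\alpha$ controlling the sign.

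The genuine gap is in the algebraic identity \eqref{A3}. Your ``key observation'' that $e^{\alpha(\eta+\zeta)}=e^{\alpha\eta}e^{\alpha\zeta}$ does not close the argument as stated, because in $J[h^\alpha](\xi)$ the inner variable $\zeta$ ranges over $[\xi-\eta,\xi]$, so $\eta+\zeta$ runs over $[\xi,\xi+\eta]$ rather than being pinned at $\xi$; you cannot factor $e^{\alpha\xi}$ out of $J[h^\alpha]$ and reduce to the $\alpha=0$ identity. The paper resolves this by first rewriting \eqref{A3} in the equivalent convolution form
\[
\xi^2 h(\xi) = B_\infty\!\int_0^\xi \eta h(\eta)\,d\eta - \int_0^\xi \eta h(\eta)\,d\eta\int_0^{\xi-\eta}\zeta h(\zeta)\,d\zeta + \int_0^\xi \eta^2 h(\eta)\,\varphi(\xi-\eta)\,d\eta,
\]
with $\varphi=q/\xi$, obtained by using the ODE $(q/\xi)'=-h$ to collapse the inner integral $\int_{\xi-\eta}^\xi h(\zeta)\,d\zeta$ into $\varphi(\xi-\eta)-\varphi(\xi)$. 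In this form every bilinear term is a genuine convolution at $\xi$, so the exponential factorization does apply; the residual mismatch is then removed by applying the integral operator $W\mapsto F+\lambda e^{\lambda\xi}\int_0^\xi e^{-\lambda\eta}F\,d\eta$ (the inverse of $W-\lambda\int_0^\xi W$). Your sketch is missing both the convolution rewriting and this inversion step, and without them the ``bookkeeping'' you anticipate does not go through.
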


\begin{rem}
 The solutions constructed in Proposition \ref{P.VoltLikePb} are the only solutions of \eqref{A1}-\eqref{A3} with the asymptotics $G(x) \to 0$ as $x \to -\infty$.
Indeed, suppose that $G >0$,  $G(x)\sim e^{\beta x}$  and $B(x)\to B_{\infty}$ as $x \to -\infty$.
 We can assume that $B_{\infty}>0$ since otherwise $A \to \infty$ and then also $G \to \infty$ as $x \to -\infty$. 
 The equations in  \eqref{A2} imply that $ Q\leq Ce^{\beta x}$, $P \leq Ce^{\beta x}$ 
 and $J[G](x) \leq Ce^{2\beta x}$. Then \eqref{A3} implies that $G(x) \sim 4 B_{\infty} A(x)$ and \eqref{A1} implies that 
 $A(x)=c_1 e^{2 B_{\infty} x}$ as $x \to -\infty$. 
Thus, $G(x) \sim k_1 e^{2 B_{\infty} x}$ as $x \to -\infty$ with $\beta = 2B_{\infty}$.
 Using this asymptotic formula we can compute also the asymptotics of $A,B,P,Q$ and $J[G]$ as $x \to -\infty$ to obtain
 $G(x) \sim K_1e^{\beta x} + k_2 e^{2\beta x} + \cdots +k_m e^{m\beta x} + \cdots $ as $x \to -\infty$ with $m\beta \leq 1+\beta< (m+1)\beta$. 
 (In some resonance cases the terms $e^{k\beta x}$ might
 have to be replaced by $xe^{k\beta x}$.)
 
 However, computing the next order in this asymptotics we can include an additional parameter $\bar k$ from the second equation in \eqref{A2}, since we have 
 $Q(x)=\nu_1 e^{\beta x} + \nu_2 e^{2 \beta x} + \cdots + \nu_{m}e^{m\beta x} + \bar k e^{x}$ as $x \to -\infty$. Plugging this into \eqref{A3} 
 gives
 \begin{equation}\label{rem1}
  G(x) \sim k_1e^{\beta x} + k_2 e^{2\beta x} + \cdots +k_m e^{m\beta x} + C\bar k e^{(1+\beta)x} + \cdots \qquad \mbox{ as } x \to -\infty
  \end{equation}
  with the free parameters $k_1$ and $\bar k$, such by varying $\alpha$ in Proposition \ref{P.VoltLikePb} one indeed obtains  all solutions.
 \end{rem}
 
Proposition \ref{P.VoltLikePb} implies 
that we must have $\alpha=0$ at the beginning of each of the ranges of $x$ for
which the solutions of (\ref{ODE1})-(\ref{Ident}) are approximately described by
 the solutions of \eqref{A1}-\eqref{A3}.

\subsubsection{ODE regimes with $E\approx\frac{1}{\varepsilon}.$}
\label{Ss.odeinstability}

We now examine the case in which the instability for $Q$ in the second
equation of (\ref{ODE2}) is triggered for values of $x$ for which $E$ is
of order $\frac{1}{\varepsilon}$ but $G$ is small. For those ranges of $x$ the
dynamics of $G(x) $ is described, before the instability is
triggered, as in Subsections \ref{Ss.ode1}, \ref{Ss.absmall} and \ref{Ss.ode2}. In all those cases the
functions $A,B,P,Q$ and $G$ are approximated by  ODEs.
Given that the characteristic time scale for all these functions is much
larger than one and the instability associated to $Q(x)  $ is
triggered in times of order one, we can describe the triggering
by (\ref{Masympt}).
We can describe the evolution 
after the instability is
triggered  similarly as  in Subsection \ref{Ss.fastinstability}. By assumption $G(x_{\ast})  $ is
small and due to Lemma \ref{L.Gapprox} we can approximate (\ref{ODE1})-(\ref{Ident}) by  a system of ODEs. Moreover, as long as
$G(x)  $ remains small we can neglect the contribution of the term
$P(x)Q(x) $ in \eqref{Ident}. Then, 
we can approximate the evolution of the functions $A,B$ and $G$
 using the ODEs in Subsections \ref{Ss.ode1}, \ref{Ss.absmall} and \ref{Ss.ode2}.
This approximation is valid as long as $P(x)Q(x)  $ is small compared to $G(x)$
and  the evolution of $P(x)$ and $Q(x)$  is described by  (\ref{ODE2}).
 If the sign in  (\ref{Masympt}) is a minus,  we can then conclude exactly as in Subsection \ref{Ss.fastinstability} 
 that $Q$ becomes negative.

Suppose now that the sign in (\ref{Masympt}) is a plus sign. Using the
same arguments as above, it follows that as long as $P(x)Q(x) \ll G(x)  $ we can
approximate $Q(x) $ as $G(x_{\ast})  (  1+e^{(  1-\frac{\varepsilon}{2}) (  x-x_{\ast})  })$. We then
define $x_{1}$ by  (\ref{X1}). Notice that
as with the fast instability described in Subsection \ref{Ss.fastinstability} we
might have that $x_{1}-x_{\ast}$ is very large and therefore we might have relevant
changes of the values of $A$ and $B$ during
this phase. However,  as long as we can use the ODE
approximations in Subsections \ref{Ss.ode1}, \ref{Ss.absmall} and \ref{Ss.ode2} we can deduce, arguing as in
Subsection \ref{Ss.fastinstability}, that either $a(x_1)  \geq\frac{a(x_{\ast})  }{2}$ or $b(x_{1}) \leq2b(x_{\ast})$ and conclude the argument as before.

\subsubsection{Fast instability in the kinetic regime.}\label{Ss.instkinetic}

A  difference to the argument in Subsection \ref{Ss.fastinstability}
is that in principle it might be possible for $G(x)  $ to reach
values of order one during the interval $[x_{\ast},x_{1}]$.
Then we recall the definition of the sequence $\{  \hat{x}_{n}\}  $ in Subsection \ref{Ss.kinetic}
and that those are 
 the characteristic times for which we have the approximation
$G(x)  \simeq G(x-\hat{x}_{n})  .$ 
For $x<\hat x_n$ and $|x-\hat x_n|\gg 1$ we can match the
asymptotics of the solutions for $x-x_{\ast}\gg 1$. With $t=x-\hat{x}_{n}$ 
we have the approximation
\[
Q\simeq G(  x_{\ast})  e^{(  1-\frac{\varepsilon}{2})
(x-x_{\ast})  }=G(x_{\ast})  e^{(1-\frac{\varepsilon}{2}) (  \hat{x}_{n}-x_{\ast})
}e^{(  1-\frac{\varepsilon}{2})  (x-\hat{x}_{n})
}=G(x_{\ast})  e^{(  1-\frac{\varepsilon}{2})(\hat{x}_{n}-x_{\ast})  }e^{(  1-\frac{\varepsilon}{2})  t}
\]
for $t<0$ and $|t|\gg 1$. By assumption $G(x_{\ast})  e^{(  1-\frac{\varepsilon}%
{2})(\hat{x}_{n}-x_{\ast})  }$ is at most of order one.
If we have that $G(  x_{\ast})  e^{(  1-\frac{\varepsilon}{2})  (  \hat{x}_{n}-x_{\ast})  }$ is of order one we would
have that $G$ can be approximated for $t<0,\;\vert t\vert $
large by a solution of the Volterra-like problem for the additive kernel (cf.
\eqref{A1}-\eqref{A3}) with matching condition
$G(t)  \sim G_{\rho_n}^{\alpha }(t)$ as $t \to -\infty$
where $\alpha= G(x_{\ast})  e^{(  1-\frac{\varepsilon}{2}) (  \hat{x}_{n}-x_{\ast})  }.$ If $\alpha>0$ is of
order one it then follows that $B_{\rho_n}^{\alpha}(x)  $ becomes negative for a
finite value. Alternatively, we might have $\alpha \rightarrow0.$ In that
case we can approximate $A(x) $, $B(x)  $, $P(x)$, $Q(x)$ and $G(x)  $ using the functions
$A_{\rho_n}^{\alpha}, \dots, G_{\rho_n}^{\alpha}$ with small $\alpha$.
Notice that for $t$ of order
one we cannot approximate any longer $Q$ by  the second equation in
(\ref{Nminus}) but  $Q$ is close to $Q_{\rho_n}(t)  $ for $t$ of order one. However, using (\ref{alphadefs}) as well as
the fact that $x=e^{t}$ we obtain the following asymptotics for $\alpha \to 0$ and then $t \gg 1$
\begin{equation}
Q\sim Q_{\rho_n}(t)  +\alpha \big (  1-\int_{-\infty}^{\infty}%
Q_{\rho_n}(y)\big)  e^{t}\sim G_{\rho_n}(t)  +\alpha (1-M)  e^{t}\,, \label{T3E5}%
\end{equation}
where we use the fact that for large $t$ we have $Q_{\rho_n}(t)  \sim G_{\rho_n}(t)  $ and that 
$\int_{-\infty}^{\infty}Q_{\rho_n}(y)dy=\int_{-\infty}^{\infty}G_{\rho_n}(y)dy=\frac{\rho_n}{1+\rho_n}<1$.
 Notice that (\ref{T3E5})
yields an interesting connection condition for the coefficient describing the
exponential separation of $Q$ from $G.$ Indeed,  during the
kinetic regime this separation takes place according to $\alpha e^{t}$
for $t\rightarrow-\infty$ and it becomes $\alpha (1-M)
e^{t}$ as $t\rightarrow\infty.$ For sufficiently large  $t$ 
the term $\alpha (1-M)  e^{t}$ becomes larger than $G_{\rho_n}(t)$.
 Using the asymptotics of $G_{\rho_n}(t)$ (cf. (\ref{Grhoplus})) it follows that this happens for $e^{(1+\rho_n)  t}\approx\frac{1}{\alpha}$
and $G_{\rho_n}(t)  $
is of order $\alpha^{\frac{\rho_n}{1+\rho_n}}.$ 
For large $t$  we enter the regime where $G$ is
described as in Subsection \ref{Ss.ode1}. In particular $2 A(t)  $
approaches $M$ for large $t$ and arguing as in Subsection \ref{Ss.overview} it follows
that $A$ cannot decrease to values smaller than $\frac{M}{4}$ without arriving
to values of $x=x_{1}$ where (\ref{X1}) takes place. 
We can then argue as in the rest of Subsection \ref{Ss.overview} to show that $B$ changes
sign. Indeed, the arguments used there only require that $E$ is bounded by
$\frac{C}{\varepsilon}.$

It is relevant to remark  that the amplitude
of the exponentially growing perturbation implicitly contained in the equation
for $Q$ (cf. \eqref{ODE2}) cannot become arbitrarily small as
$x\rightarrow\infty$ in spite of the fact that it is reduced by a factor
$(1{-}M)  $ each time that the solution passes by one of the
intervals $[x_{n},\bar{x}_{n}]  $ where it is described by  the solutions  described in
Lemma \ref{L.kinetic}. Indeed, during the intervals $(\bar{x}_{n},x_{n+1})  $ in which the function $G(x)$ is
described by means of the ODE approximations described in Subsections \ref{Ss.ode1}-\ref{Ss.ode2}  the perturbation of  $Q-G$ would
continue growing exponentially and the length of the intervals $(\bar{x}_{n},x_{n+1})  $ is very large of order $\frac{1}{\varepsilon}$). 
This growth is much larger than the decrease by the factor $(1{-}M)$ during the kinetic regime.

\subsection{Summary of the analysis of the instabilities}
\label{Ss.summary}

 We have seen, using formal arguments, that an instability for $Q$ yields a change of sign, either of $Q$ or of $B$.
To this end we have considered several cases. Initially the
Volterra-like problem can be approximated by a Lotka-Volterra equation. In
such a case, ODE arguments show that instabilities for $Q$ yield changes of
sign, either for $Q$ or $B$.

 It is possible to compute explicit formulas for the solutions of the
Volterra-like problem associated to the additive kernel. The final conclusion
is the same. We can put an additional parameter $\alpha$ in the solution
as $x\rightarrow-\infty.$ If $\alpha<0$ we obtain that $Q$
 changes sign and it vanishes for finite $s.$ If $\alpha>0$ we obtain
that $B$ becomes negative for finite $x.$ 

 The most delicate case corresponds to the ODE regimes in the case when the energy is large.
that case there are different possibilities. In some cases $B$ or $Q$ change
sign quickly, because the integral terms are irrelevant and the problem can be
approximated by an integral equation. In other cases the solutions must be
approximated by the solutions of the additive kernel (the Volterra-like
problem) discussed before.

\section{Appendix}

\subsection{Proof of Lemma \ref{L.roots}}

We assume from now on that $\eps \leq \eps_0$ for sufficiently small $\eps_0$. Then we claim that there exists $R\geq 3$ that is independent of $\eps$ such that 
 all solutions of
\eqref{roots1} are contained in the set $B_{R}(0)  \cap\{\operatorname{Re}(\mu)  \geq0\}  $.

Indeed, if $|\mu| \geq R$ and $\mbox{Re}(\mu)>0$ we have, using 
Stirling's formula,
\begin{equation}\label{Tequation}
\Big\vert \frac{\Gamma( 1-\frac{\varepsilon}{2}+\mu)  }%
{\Gamma(1+\mu)  }\Big\vert  \leq2\Big\vert \frac{\left(  1-\frac{\varepsilon}{2}+\mu\right)^{(1-\frac{\varepsilon}{2}+\mu) }}{(1+\mu)^{(1+\mu)}}
\Big\vert=:2 \exp\big(T(\mu,\eps)\big)\,.
\end{equation}
where
\begin{align*}
T(\mu,\eps)
& =\operatorname{Re}\Big(  (  1-\frac{\varepsilon}{2}+\mu)\Big(  \ln( \vert 1-\frac{\varepsilon}{2}+\mu\vert )
+i\arg(  1-\frac{\varepsilon}{2}+\mu) \Big)\Big) \\
&  \qquad -\operatorname{Re}\Big( ( 1+\mu) \Big(  \ln(\vert 1+\mu\vert )  +i\arg(1+\mu)\Big)\Big)\,.
\end{align*}

Using that  $\left\vert \mu\right\vert \geq R\geq3$ 
we obtain 
\begin{equation}\label{Testimate}
 \begin{split}
T(\mu,\eps) &\leq \operatorname{Re}\Big( (1+\mu) \Big(  \ln(\vert 1-\frac{\varepsilon}{2}+\mu\vert)  
+i\arg(1-\frac{\varepsilon}{2}+\mu)\Big)\Big)  \\
& \qquad  -\operatorname{Re}\Big((  1+\mu)\Big(  \ln(\vert 1+\mu\vert )  +i\arg(1+\mu) 
\Big)\Big)\\
&  \qquad -\operatorname{Re}\Big(  (1+\mu)  \ln(\vert1+\mu\vert) \Big)  -\operatorname{Re}\Big(  i\Big(
1+\mu)  \arg(1+\mu)\Big) \\
& =\operatorname{Re}\Big(  (1+\mu)  \ln(  \frac{\vert 1-\frac{\varepsilon}{2}%
+\mu\vert }{\vert 1+\mu\vert })  \Big)  \qquad  +\operatorname{Re}\Big(  i(1+\mu) \Big( \arg(1-\frac{\varepsilon}{2}+\mu)  -  \arg( 1+\mu) \Big) \Big)\,.
\end{split}\end{equation}

We have
\begin{align*}
\ln\Big(  \frac{\vert 1-\frac{\varepsilon}{2}+\mu\vert}{\vert 1+\mu\vert }\Big)   & \leq2\Big\vert \Big\vert
\frac{1-\frac{\varepsilon}{2}+\mu}{1+\mu}\Big\vert -1\Big\vert   \leq\frac{2\varepsilon}{\vert 1+\mu\vert }
\end{align*}
and choosing  $\arg(\cdot) \in (-\pi,\pi)  $ we obtain
\[
\big\vert \arg(  1-\frac{\varepsilon}{2}+\mu)  -\arg(1+\mu)  \big\vert = \Big \vert \arg\big(
1-\frac{\varepsilon}{2}\frac{1}{1+\mu}\big)\Big \vert \leq\frac{\varepsilon}{\vert 1+\mu\vert }\,.
\]
Combining these last two inequalities with \eqref{Testimate} we find
$T(\mu,\eps) 
 \leq\frac{3\varepsilon}{\vert 1+\mu\vert }\vert1+\mu\vert \leq3\varepsilon
$ whence, using \eqref{Tequation}, it follows that $\big\vert \frac{\Gamma(  1-\frac{\varepsilon}{2}+\mu)  }{\Gamma(  1+\mu)  }\big\vert \leq3$.
Similarly
we obtain
$\big\vert \frac{\Gamma(\frac{\varepsilon}{2}+\mu)  }{\Gamma(1+\mu)  }\big\vert \leq3$.

Thus, if $\mu$ solves \eqref{roots1}, we have
\[
 \frac{\Gamma(1-\frac{\varepsilon}{2})  }{(  1-\frac{\varepsilon}{2})  \frac{\varepsilon}{2}} 
 =\frac{\Gamma(1-\frac{\varepsilon}{2}+\mu)  }{\Gamma(  1+\mu)  }\frac{(1-\mu)  }{(1-\frac{\varepsilon}{2}) (  \frac{\varepsilon}{2}-\mu)  }
+\frac{\Gamma(  \frac{\varepsilon}{2}+\mu)  }{\Gamma(1+\mu)  }\Gamma\big(1-\frac{\varepsilon}{2}\big)  \frac{(1-\mu)  }{\Gamma(  \frac{\varepsilon}{2})
\frac{\varepsilon}{2}(  1-\frac{\varepsilon}{2}-\mu)  }
\]
and using the above inequalities and the fact that $\lim_{z\rightarrow0}  \Gamma(z)
z  =1$ we obtain that for any root $\mu$ of \eqref{roots1} we have
\[
\frac{\Gamma(  1-\frac{\varepsilon}{2})  }{(  1-\frac {\varepsilon}{2})  \frac{\varepsilon}{2}}\leq3\Big\vert \frac{(1-\mu)  }{(  1-\frac{\varepsilon}{2})
(\frac {\varepsilon}{2}-\mu)  }\Big\vert +4\Gamma\big(1-\frac{\varepsilon}{2}\big)\Big \vert \frac{(1-\mu)  }{(1-\frac{\varepsilon}{2}-\mu)  }\Big\vert\,.
\]

The right-hand side of this inequality is bounded by a constant independent of
$\varepsilon$ if $\vert \mu\vert \geq R\geq3.$ However, the
left-hand side diverges as $\varepsilon\rightarrow0.$ This gives a
contradiction. Therefore, all  solutions of \eqref{roots1} satisfy
$\vert \mu\vert <R.$

We now rewrite \eqref{roots1} as 
\begin{equation}\label{roots2}
0=-\frac{\Gamma(1-\frac{\varepsilon}{2})\Gamma(1+\mu)  }{(1-\frac{\varepsilon}{2})
\frac{\varepsilon}{2}}  +\frac{\Gamma(1-\frac{\varepsilon}{2}+\mu)  (1-\mu)  }{(1-\frac{\varepsilon}{2}) 
(  \frac{\varepsilon}{2}-\mu)  } +\frac{\Gamma(\frac{\varepsilon}{2}+\mu)\Gamma(1-\frac{\varepsilon}{2})  (1-\mu)  }{\Gamma(\frac{\varepsilon}{2})  
\frac{\varepsilon}{2}(1-\frac{\varepsilon}{2}-\mu)  }=:\omega(\mu,\eps)%
\end{equation}

Using that $\Gamma(\frac{\varepsilon}{2})  \frac{\varepsilon}%
{2}\rightarrow1$ as $\varepsilon\rightarrow0$ it  follows that the
right-hand side of \eqref{roots2} is bounded in the set $\{\operatorname{Re}(\mu)  \geq0,\ \vert \mu\vert \leq R,\ \min\{ \vert \mu-1\vert ,\vert \mu\vert\} 
\geq\delta\}  $ for each $\delta>0$ if we take $\varepsilon$
sufficiently small (depending on $\delta$). However, since $\vert \Gamma(1+\mu)\vert \geq C_{\ast}>0$ in the set 
$\{\operatorname{Re}(\mu)  \geq0,\ \vert \mu\vert \leq R\}  $ it  follows that the left hand side tends to infinity as
$\varepsilon\rightarrow0$ uniformly in this set. Then, all roots of
\eqref{roots1} in $\{\operatorname{Re}(\mu)\geq0\}  $ are contained in the set $\{\operatorname{Re}(\mu)  \geq0,\ \vert \mu\vert \leq R,\ \min\{ \vert
\mu-1\vert ,\vert \mu\vert \}  <\delta\}  .$ We
can compute the asymptotics of the roots of \eqref{roots1}
approximating $\omega(\mu,\eps)$ in \eqref{roots2} by suitable comparison functions and  Rouche's
Theorem.

In the region $\vert \mu\vert \leq\delta$ with small $\delta$,  we use as comparison function
$ \omega_1(\mu,\eps):= -\frac{2}{\eps}+ \frac{\eps}{( \frac{\eps^2}{4}-\mu^2)} + \frac{\mu}{( \frac{\eps}{2}-\mu)}$,
while
in the region $|\mu-1| \leq \delta$ we use
$ \omega_2(\mu,\eps):=
 -\frac{2}{\varepsilon}  +(  1-\mu)  \frac{\frac{\varepsilon}%
{2}+\mu}{(  1-\frac{\varepsilon}{2}-\mu)  }$.
Using the asymptotics of the function $\Gamma(z)$ for $z \sim 1$ and $z \sim 0$ we obtain \eqref{muone}.

\subsection{Proof of Proposition \ref{P.VoltLikePb}}

We first state the following auxiliary lemma.
\begin{lemma}
\label{IntODEAux}Suppose that $F,W\in L^{1}(0,R)  $ satisfy
\[
W(\xi)  -\lambda\int_{0}^{\xi}W(\eta)  d\eta=F(\xi)  \qquad \mbox{ a.e.  in } (0,R)
\]
for some $\lambda\in\mathbb{R}$. Then
\begin{equation}
W(\xi)  =AF(\xi)  :=\lambda e^{\lambda \xi}\int
_{0}^{\xi}e^{-\lambda \eta}F(\eta)  d\eta+F(\xi) \qquad \mbox{ a.e.  in } (0,R)\,. \label{T2E1}%
\end{equation}
\end{lemma}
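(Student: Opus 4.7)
The plan is to reduce the integral equation to a linear first-order ODE via the standard trick of introducing the antiderivative of $W$, and then solve it using an integrating factor (Duhamel's formula). Since $W \in L^1(0,R)$, the function $V(\xi) := \int_0^\xi W(\eta)\,d\eta$ is absolutely continuous on $[0,R]$ with $V(0)=0$ and $V'(\xi) = W(\xi)$ a.e.

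First, I would rewrite the integral equation as $W(\xi) - \lambda V(\xi) = F(\xi)$, substitute $W = V'$ to obtain the linear ODE
\begin{equation*}
V'(\xi) - \lambda V(\xi) = F(\xi), \qquad V(0) = 0,
\end{equation*}
which holds a.e.\ in $(0,R)$. Multiplying through by the integrating factor $e^{-\lambda \xi}$ gives $\bigl( e^{-\lambda \xi} V(\xi)\bigr)' = e^{-\lambda \xi} F(\xi)$, where the left-hand side is the distributional derivative of an absolutely continuous function.

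Next, I would integrate from $0$ to $\xi$, using $V(0)=0$, to get
\begin{equation*}
e^{-\lambda \xi} V(\xi) = \int_0^\xi e^{-\lambda \eta} F(\eta)\,d\eta,
\end{equation*}
so $V(\xi) = e^{\lambda \xi}\int_0^\xi e^{-\lambda \eta}F(\eta)\,d\eta$. Substituting back into $W(\xi) = F(\xi) + \lambda V(\xi)$ yields the formula \eqref{T2E1}. There is no real obstacle here: the only subtlety is justifying that the absolute continuity of $V$ lets us apply the fundamental theorem of calculus to $e^{-\lambda \xi} V(\xi)$, which is immediate since the product of an absolutely continuous function with a smooth function is absolutely continuous, and its derivative is computed by the usual product rule a.e. Conversely, a direct differentiation shows that the right-hand side of \eqref{T2E1} does satisfy the original integral equation, which gives uniqueness in $L^1(0,R)$ as a bonus.
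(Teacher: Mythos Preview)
Your proof is correct and follows essentially the same approach as the paper: define the antiderivative $H(\xi)=\int_0^\xi W(\eta)\,d\eta$, observe it is absolutely continuous and satisfies the linear ODE $H'-\lambda H=F$ with $H(0)=0$, solve by the integrating factor, and substitute back. The paper's argument is identical up to notation, only slightly more terse.
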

\begin{proof}
We define $H (\xi)  =\int_{0}^{\xi}W(\eta)  d\eta.$ The
function $H$ is absolutely continuous and satisfies
$\frac{dH(\eta)  }{d\eta}-\lambda H(\eta)  =F(\eta)$ and $H(0)=0$.
Therefore $H(\xi)  =e^{\lambda \xi}\int_{0}^{\xi}e^{-\lambda \eta}F(\eta)  d\eta$ for $\xi\in(0,R)  ,$ whence (\ref{T2E1}) follows.
\end{proof}

For the proof of Proposition \ref{P.VoltLikePb} it is convenient to go over to the variable $\xi =e^x$.
For notational convenience we neglect the index $\rho$ throughout this proof.

We
first define functions $a$, $b$, $p$, $q$ and $h$ for
$\xi>0$ by 
\begin{equation}
a(\xi)  =A(x)  \,,\quad b(\xi)  =B(x)\,,\quad p(\xi)=P(x)\,,\quad
q(\xi)=Q(x)  \,,\quad  \xi^{2}h(\xi)  =G(x)\,.  \label{T1E1}%
\end{equation}

We then define functions $a^{\alpha}$, $b^{\alpha}$, $p^{\alpha}$, $q^{\alpha}$ and $G^{\alpha}$ as
\begin{equation}\label{T1E2}
 \begin{split}
h^{\alpha }(\xi)   &  =h(\xi)  e^{\alpha \xi}\,, \qquad  p^{\alpha  }(\xi)  =\frac{1}{\xi}\int_{0}^{\xi}\eta^{2}h^{\alpha}(\eta)  d\eta\\
a^{\alpha}(\xi)&  =\frac 1 2 \int_{0}^{\xi}\eta h^{\alpha  }(\eta)  d\eta\,, \qquad b^{\alpha  }(\xi)  =B_{\infty} -\frac 1 2 \int_{0}^{\xi}\eta h^{\alpha}(\eta)  d\eta\\
q^{\alpha}(\xi)   &  =q(\xi)  e^{\alpha \xi}+\alpha \xi\Big(  1-\int_{0}^{\xi}e^{\alpha \eta}\frac{q(\eta)  }{\eta}d\eta\Big) \,.
\end{split}
\end{equation}
Then it holds 
\begin{equation}
\begin{split}
A^{\alpha}(x)=a^{\alpha  }(\xi)&\qquad  B^{\alpha}(x)  =b^{\alpha  }(\xi)\,,\qquad   P^{\alpha  }(x)  =p^{\alpha  }(\xi)\,\\
Q^{\alpha}(x)=q^{\alpha  }(\xi)& \qquad  G^{\alpha}(x) =\xi^{2}h^{\alpha}(\xi)\,,
\end{split}\label{T1E7}%
\end{equation}
with $A^{\alpha}, \cdots, G^{\alpha}$ as in \eqref{alphadefs}.

Notice that the asymptotics of the functions $A, \cdots G$
 in Lemma \ref{L.kinetic} imply that all the
integrals appearing in (\ref{T1E2}) are well defined. In particular notice
that $0\leq h(\xi)  \leq C\xi^{b-2}$ and $0\leq
\frac{q(\xi)  }{\xi}\leq C\xi^{b-1}$ for $\xi\leq1$ with
$b>0.$ This implies that all the integrals appearing on the right-hand
side of (\ref{T1E2}) are convergent.
One slight complication is due to the fact that $h(\xi)$ is not integrable and thus we cannot write directly
$\frac{q(\xi) }{\xi}=-\int_{0}^{\xi}h(\eta)d\eta.$ The
singularity of $h(\xi)  $ is like $\frac{1}{\xi^{2-\sigma}}$ with
$\sigma>0$ such that $\frac{q(\xi)}{\xi}$ is integrable and $q(\xi) \sim \xi^b$ as $\xi \to 0$.

It is worth  noticing that the last equation in (\ref{T1E2}) can due to Lemma \ref{IntODEAux} be inverted to obtain
\begin{equation}\label{InvForm}
q(\xi)  =n^{\alpha}(\xi)e^{-\alpha \xi} + \alpha \xi \int_0^\xi \frac{q^{\alpha}(\eta)}{\eta} e^{- \alpha \eta}d\eta-\alpha.
\end{equation}

Using the change of variables $\xi=e^{x}$ and (\ref{T1E1}) in \eqref{A1}-\eqref{A3} we obtain that the
functions $a$, $b$, $p$, $q$ and $h$ solve
\begin{align}
\frac{da}{d\xi}  &  =\frac 1 2 \xi h(\xi)  \,, \qquad  \frac{db}{d\xi}=-\frac 1 2 \xi h(\xi) \label{T1E3}\\
\frac{d}{dx}(\xi p(\xi))   &  =\xi^{2}h(\xi)\,,\qquad  \frac{d}{d\xi}\Big(  \frac{q(\xi)  }{\xi}\Big) =-h( \xi)  \label{T1E4}\\
\xi^{2}h(\xi)   &  =4 a(\xi)  b(\xi)+ p(\xi)q(\xi)  +j[h]  (\xi)
\label{T1E5}
\end{align}
with $j[h](\xi)    =\int_{0}^{\xi}\eta h(\eta) d\eta\int_{\xi-\eta}^{\xi}h(\zeta)(  \eta+\zeta)  d\zeta$.

Moreover, the definition of $a^{\alpha}$ and $b^{\alpha}$  in (\ref{T1E2}) implies that these two
functions solve (\ref{T1E3}) with $h=h^{\alpha}$. Similarly $p^{\alpha}$ and  $h^{\alpha}$
 solve the first equation of (\ref{T1E4}). It remains to check that $h^{\alpha}$ and $q^{\alpha}$ 
 satisfy the second equation of (\ref{T1E4}) and (\ref{T1E5}). In order to simplify the
notation we define the functions  $\varphi(\xi)
=\frac{q(\xi)  }{\xi}$ and $\varphi^{\alpha}(\xi)= \frac{q^{\alpha}(\xi)}{\xi}$.
 Using  \eqref{InvForm} we obtain 
\begin{equation}
\varphi (\xi)  =\varphi^{\alpha}(\xi)e^{-\alpha}\xi+ \alpha \int_0^\xi \varphi^{\alpha}(\eta)e^{-\alpha \eta}\,d\eta - \alpha
\label{T2E4}%
\end{equation}
and it is straightforward to check that $q^{\alpha}$ and $h^{\alpha}$   solve the second equation of (\ref{T1E4}).

We now check that (\ref{T1E5}) holds. To this end we first transform the equation to a more convenient form. Notice that both sets of
functions $a,\dots, h$, as well as $a^{\alpha}, \dots, h^{\alpha}$ satisfy $b (\xi)  =B_{\infty}-\int_{0}^{\xi}\eta h(\eta)d\eta$
and $p(\xi)  =\frac{1}{\xi}\int_{0}^{\xi}\eta^{2}h(\eta)  d\eta$.
Then (\ref{T1E5}) can be rewritten as
\begin{align}
\xi^{2}h(\xi)   &  =B_{\infty}\int_{0}^{\xi}\eta h(\eta)d\eta-\int_{0}^{\xi}\eta h(\eta)  d\eta\int_{0}^{\xi-\eta}\zeta h(\zeta)d\zeta
+\frac{q(\xi)}{\xi}  \int_{0}^{\xi}\eta^{2}h(\eta)  d\eta \label{T2E2}\\
& \qquad +\int_{0}^{\xi}\eta^{2}h(\eta)  d\eta\int_{\xi-\eta}^{\eta}h(\zeta)d\zeta\,.\nonumber
\end{align}

We now rewrite the last term, using the second equation of (\ref{T1E4}), as
\[
\int_{0}^{\xi}\eta^{2}h(\eta)  d\eta\int_{\xi-\eta}^{\xi}h(\zeta)\,d\zeta
=-\int_{0}^{\xi}\eta^{2}h(\eta)\Big(  \frac{q(\xi)  }{\xi}-\frac{q(\xi-\eta)}{\xi-\eta}\Big)d\eta\,.
\]
Plugging this in (\ref{T2E2}) we find, after some simplifications, 
 and using the definition of the function $\varphi(\xi)=\frac{q(\xi)}{\xi}$ that
\begin{equation}
\xi^{2}h( \xi)  =B_{\infty}\int_{0}^{\xi}\eta h(\eta)  d\eta-\int_{0}^{\xi}\eta h(\eta)  d\eta\int_{0}^{\xi-\eta}\zeta h(\zeta)  d\zeta+
\int_{0}^{\xi}\eta^{2}h(\eta)  \varphi(\xi-\eta)  d\eta\,. \label{T2E3}%
\end{equation}
Thus,  (\ref{T1E3})-(\ref{T1E5}) is equivalent to  (\ref{T1E3}), (\ref{T1E4}), (\ref{T2E3}).

We will now check that, if $\varphi$ and $h$ solve  (\ref{T2E3}), we have the same for $\varphi^{\alpha}$ and $h^{\alpha}$. 
Using the definitions in \eqref{T1E2} we obtain
\begin{equation}\label{T2E5}
 \begin{split}
& \xi^{2}h^{\alpha  }(xi)  e^{-\alpha \xi}+\alpha \int_{0}^{xi}\eta^{2}h^{\alpha}(\eta)   e^{-\alpha \eta}d\eta\\
  &  =B_{\infty}\int_{0}^{\xi}\eta h^{\alpha}(\eta)  e^{-\alpha \eta}d\eta-\int_{0}^{\xi}\eta h^{\alpha}(\eta) 
  e^{-\alpha \eta}d\eta\int_{0}^{\xi-\eta}\zeta h^{\alpha  }(\zeta)  e^{-\alpha \zeta}d\zeta
  \\
  &\qquad +e^{-\alpha \xi}\int_{0}^{\xi}\eta^{2}h^{\alpha}(\eta)  \varphi^{\alpha  }(\xi-\eta)  d\eta+\alpha \int_{0}^{\xi}\eta^{2}h^{\alpha}(\eta)
  e^{-\alpha \eta}d\eta\int_{0}^{\xi-\eta}\varphi^{\alpha}(\zeta)  e^{-\alpha \zeta}d\zeta\,.
\end{split}
\end{equation}

We can now apply Lemma \ref{IntODEAux} with $W(\xi)=\xi^2h^{\alpha}(\xi) e^{-\alpha \xi}$ to obtain 
\begin{equation}
\xi^{2}h^{\alpha}(\xi)e^{-\alpha \xi}=B_{\infty}AF_{1}(\xi)  -AF_{2}(\xi)  +AF_{3}(\xi)  +\alpha AF_{4}(\xi)  \,,\label{T2E6}%
\end{equation}
where $A$\ is the operator defined in (\ref{T2E1}) with $\lambda=-\alpha$
and
\begin{align*}
F_{1}(\xi)    =\int_{0}^{\xi}\eta h^{\alpha}(\eta) e^{-\alpha \eta}\,d\eta\,,\qquad &
F_{2}(\xi)     =\int_{0}^{\xi}yh^{\alpha}(\eta)e^{-\alpha \eta}\,d\eta 
\int_{0}^{\xi-\eta}\zeta h^{\alpha}(\zeta) e^{-\alpha \zeta}\,d\zeta\\
F_{3}(\xi)     =e^{-\alpha \xi}\int_{0}^{\xi}\eta^{2}h^{\alpha}(\eta)
  \varphi^{\alpha}(\xi-\eta)d\eta\,, \qquad & F_{4}(\xi)     =\int_{0}^{\xi}\eta^{2}h^{\alpha}(\eta) e^{-\alpha \eta} \,d\eta 
\int_{0}^{\xi-\eta}\varphi^{\alpha}(\zeta) e^{-\alpha \zeta}\,d\zeta\,.
\end{align*}

We then have
\begin{equation}\label{T3E1}
 \begin{split}
 AF_{1}(\xi) &  =\int_{0}^{\xi}\eta h^{\alpha}(\eta) e^{-\alpha \eta} - \alpha
e^{-\alpha \xi}\int_{0}^{\xi}e^{ \alpha \eta}d\eta\Big(  \int_{0}^{\eta}\zeta h^{\alpha}(\zeta) e^{-\alpha \zeta} \,d\zeta \Big) \\
&  =\int_{0}^{\xi}\eta h^{\alpha}(\eta)  e^{-\alpha \eta}d\eta-\alpha
e^{- \alpha \xi}\int_{0}^{\xi}\zeta h^{\alpha}(\zeta) e^{-\alpha \zeta}d\zeta\int_{\zeta}^{\xi}e^{\alpha \eta}d\eta\\
&  =\int_{0}^{\xi}\eta h^{\alpha}(\eta)e^{-\alpha \eta}\,d\eta - e^{-\alpha \xi}
\int_{0}^{\xi}\zeta h^{\alpha}(\zeta) e^{-\alpha \zeta}\Big(
e^{\alpha \xi}-e^{ \alpha \zeta}\Big)  d\zeta =e^{-\alpha \xi}\int_{0}^{\xi}\zeta h^{\alpha}(\zeta)\,d\zeta
\end{split}
\end{equation}
and similarly
\begin{equation}
AF_{2}(\xi)  =e^{-\alpha \xi}\int_{0}^{\xi}\eta h^{\alpha}(\eta)\,d\eta 
\int_{0}^{\xi-\eta}\zeta h^{\alpha}(\zeta)\,d\zeta
\,,\label{T3E2}%
\end{equation}%
\begin{equation}
AF_{3}(\xi)  =e^{-\alpha \xi}\int_{0}^{\xi}\eta^{2}h^{\alpha}(\eta) \varphi^{\alpha}(\xi-\eta)d\eta
- \alpha e^{-\alpha \xi}\int_0^\xi \zeta^{2}h^{\alpha}(\zeta)\,d\zeta \int_0^{\xi-\zeta}\varphi^{\alpha}(\eta)\,d\eta
\label{T3E3}%
\end{equation}
and
\begin{equation}
AF_{4}(\xi)  =e^{-\alpha \xi}\int_{0}^{\xi}\eta^{2}h^{\alpha}(\eta)  d\eta\int_{0}^{\xi-\eta}\varphi^{\alpha  }(\zeta)  d\zeta\,. \label{T3E4}%
\end{equation}

Plugging (\ref{T3E1})-(\ref{T3E4}) into (\ref{T2E6}) we obtain after some rearrangements that
$h^{\alpha}$ and $\varphi^{\alpha}$ solve \eqref{T2E3} and thus the first statement of Proposition \ref{P.VoltLikePb} follows by elementary computations.

We now compute for $\rho \in (0,1]$ the asymptotics of the solutions constructed above.

Suppose first that $\alpha<0$.

We recall that $2 B_{\infty}=\int_{0}^{\infty}\eta h_{\rho}(\eta)  d\eta= \frac{\rho}{1+\rho}<1$.
Since $0<h^{\alpha}<h$ we obtain that $b^{\alpha}$ is decreasing and $b^{\alpha}(\xi)>b^{\alpha}(\infty)>0$. Since $\alpha<0$ we have 
$\int_0^\xi \frac{q(\eta)}{\eta}e^{\alpha \eta}d\eta < \int_0^{\infty} \frac{q(\eta)}{\eta}e^{\alpha \eta}d\eta < \int_0^{\infty} \frac{q(\eta)}{\eta}d\eta =2 B_{\infty}<1$. 
Therefore, if $\alpha<0$ we have $\alpha \xi \big (  1-\int_{0}^{\xi}\frac{q(\eta)  }{\eta}e^{\alpha \eta}d\eta\big)  \rightarrow-\infty$  as $\xi\rightarrow\infty$
and thus, since $q_{\rho}(\xi)$ is bounded, we have that
$q^{\alpha}$ vanishes for some $\xi>0$ if $C_{\infty}<0$ while all the other functions remain positive.

We now consider the case $\alpha >0$. Then $h^{\alpha}$  increases exponentially and therefore $b^{\alpha}$ vanishes for some $\xi>0$.
This is obvious for all functions but $q^{\alpha}$.
To show that $q^{\alpha}$ remains positive we rewrite the formula for $q^{\alpha}$ in \eqref{T1E2}.
Integrating by parts in the last term we obtain, using also that $\frac{d}{d\xi}\big(  \frac{q(\xi)}{\xi}\big)  =-h(\xi)$, we obtain after some rearrangements
$  q^{\alpha}(\xi)   =\alpha \xi+q(\xi)  -\xi\int_{0}^{\xi}h(\eta)  \big(  e^{\alpha \eta}-1\big)  d\eta$.
On the other hand
$ b^{\alpha}(\xi)= B_{\infty} - \frac 1 2 \int_0^\xi \eta h(\eta) e^{\alpha \eta}\,d\eta$.

We now use the inequality $  e^{\alpha \eta}-1  \leq \alpha \eta e^{\alpha \eta}$ for $\eta \geq 0$.
Then
$\int_{0}^{\xi}h(\eta)  \big(  e^{\alpha \eta}{-}1\big)  d\eta\leq \alpha \int_{0}^{\xi}h(\eta) e^{\alpha \eta}d\eta$,
whence
\[ q^{\alpha}(\xi)  \geq \alpha \xi + q(\xi) - \alpha \xi \int_0^\xi h(\eta)\eta e^{\alpha \eta}d\eta 
> \alpha \xi \Big(1-\int_0^\xi h(\eta)\eta e^{\alpha \eta}d\eta\Big) \geq 2 \alpha \xi b^{\alpha}(\xi),
\]
where we used that $2B_{\infty}<1$. 
Thus, $b^{\alpha}$ vanishes before $q^{\alpha}$.

\bigskip
{\bf Acknowledgment.}
  The authors gratefully acknowledge support through the CRC 1060 {\it The mathematics of emergent effects} at the University of Bonn which is funded by
  the German Science Foundation (DFG).

\bigskip
{\small
\bibliographystyle{plain}%
\bibliography{../coagulation}%

\begin{thebibliography}{10}

\bibitem{Aldous99}
D.~J. Aldous.
\newblock Deterministic and stochastic models for coalescence (aggregation and
  coagulation): a review of the mean-field theory for probabilists.
\newblock {\em Bernoulli}, 5(1):3--48, 1999.

\bibitem{BNV16}
M.~Bonacini, B.~Niethammer, and Vel\'azquez J.J.L.
\newblock Self-similar solutions to coagulation equations with time-dependent
  tails: the case of homogeneity one.
\newblock 2016.
\newblock Preprint, arxiv:1612.06610.

\bibitem{Drake72}
R.-L. Drake.
\newblock A general mathematical survey of the coagulation equation.
\newblock In {\em Topics in current aerosol research (part 2), Hidy G.~M.,
  Brock, J.~R. eds.}, International Reviews in Aerosol Physics and Chemistry,
  pages 203--376. Pergamon Press, Oxford, 1972.

\bibitem{EMR05}
M.~Escobedo, S.~Mischler, and M.~Rodriguez~Ricard.
\newblock On self-similarity and stationary problem for fragmentation and
  coagulation models.
\newblock {\em Ann. Inst. H. Poincar\'e Anal. Non Lin\'eaire}, 22(1):99--125,
  2005.

\bibitem{FouLau05}
N.~Fournier and P.~Lauren{\c{c}}ot.
\newblock Existence of self-similar solutions to {S}moluchowski's coagulation
  equation.
\newblock {\em Comm. Math. Phys.}, 256(3):589--609, 2005.

\bibitem{Friedlander00}
S.K. Friedlander.
\newblock {\em Smoke, Dust and Haze: Fundamentals of Aerosol Dynamics}.
\newblock Topics in Chemical Engineering. Oxford University Press, second
  edition, 2000.

\bibitem{HNV16}
M.~Herrmann, B.~Niethammer, and J.J.L. Vel\'azquez.
\newblock Instabilites and oscillations in coagulation equations with kernels
  of homogeneity one.
\newblock {\em Quarterly Appl. Math.}, LXXV, 1:105--130, 2017.

\bibitem{LauNiVel16}
P.~Lauren{\c{c}}ot, B.~Niethammer, and J.J.L. Vel{\'a}zquez.
\newblock Oscillatory dynamics in {S}moluchowski's coagulation equation with
  diagonal kernel.
\newblock 2016.
\newblock Preprint, arxiv:1603:02929.

\bibitem{MNV11}
J.B. McLeod, B.~Niethammer, and Vel\'azquez J.J.L.
\newblock Asymptotics of self-similar solutions to coagulation equations with
  product kernel.
\newblock {\em J. Stat. Phys.}, 144:76--100, 2011.

\bibitem{MePe04}
G.~Menon and R.~L. Pego.
\newblock Approach to self-similarity in {S}moluchowski's coagulation
  equations.
\newblock {\em Comm. Pure Appl. Math.}, 57(9):1197--1232, 2004.

\bibitem{NTV16b}
B.~Niethammer, S.~Throm, and J.~J.~L. Vel{\'a}zquez.
\newblock A uniqueness result for self-similar profiles to smoluchowski's
  coagulation equation revisited.
\newblock {\em J. Stat. Phys.}, 164(2):399--409, 2016.

\bibitem{Smolu16}
M~Smoluchowski.
\newblock {Drei Vortr\"age \"uber Diffusion, Brownsche Molekularbewegung und
  Koagulation von Kolloidteilchen}.
\newblock {\em Physik. Zeitschrift}, 17:557--599, 1916.

\bibitem{vanDoErnst88}
P.~G.~J. van Dongen and M.~H. Ernst.
\newblock Scaling solutions of {S}moluchowski's coagulation equation.
\newblock {\em J. Statist. Phys.}, 50(1-2):295--329, 1988.

\end{thebibliography}
}

\end{document}